\long\def\comment#1{}
\long\def\@makecaption#1#2{
  \vskip 0.8ex
  \setbox\@tempboxa\hbox{\small {\bf #1:} #2}
  \parindent 1.5em  
  \dimen0=\hsize
  \advance\dimen0 by -3em
  \ifdim \wd\@tempboxa >\dimen0
  \hbox to \hsize{
    \parindent 0em
    \hfil
    \parbox{\dimen0}{\def\baselinestretch{0.96}\small
      {\bf #1.} #2
    }
    \hfil}
  \else \hbox to \hsize{\hfil \box\@tempboxa \hfil}
  \fi
}
\begin{document}

\begin{center}

  {\bf{\LARGE{
        Divide and Conquer Kernel Ridge Regression: \\ A Distributed Algorithm
        with Minimax Optimal Rates}}}

  \vspace*{.3in}

  {\large{
      \begin{tabular}{ccc}
	Yuchen Zhang $^1$ & John C.\ Duchi$^1$ & Martin J. Wainwright$^{1,2}$ \\
        {\texttt{yuczhang@eecs.berkeley.edu}} &
        \texttt{jduchi@eecs.berkeley.edu} &
        \texttt{wainwrig@stat.berkeley.edu}
      \end{tabular}
  }}

  \vspace*{.2in}

  \begin{tabular}{ccc}
    Department of Statistics$^1$ & & Department of Electrical Engineering
    and Computer Science$^2$
  \end{tabular}

  \begin{tabular}{c}
    UC Berkeley,
    Berkeley, CA 94720
  \end{tabular}

  \vspace*{.2in}

  May 2013

\end{center}

\begin{abstract}
  We establish optimal convergence rates for a decomposition-based scalable
  approach to kernel ridge regression.  The method is simple to describe: it
  randomly partitions a dataset of size $\totalobs$ into $\nummac$ subsets of
  equal size, computes an independent kernel ridge regression estimator for
  each subset, then averages the local solutions into a global predictor.
  This partitioning leads to a substantial reduction in computation time
  versus the standard approach of performing kernel ridge regression on all
  $\totalobs$ samples.  Our two main theorems establish that despite the
  computational speed-up, statistical optimality is retained: as long as
  $\nummac$ is not too large, the partition-based estimator achieves the
  statistical minimax rate over all estimators using the set of $\totalobs$
  samples.  As concrete examples, our theory guarantees that the number of
  processors $\nummac$ may grow nearly linearly for finite-rank kernels and
  Gaussian kernels and polynomially in $\totalobs$ for Sobolev spaces, which
  in turn allows for substantial reductions in computational cost. We conclude
  with experiments on both simulated data and a music-prediction task that
  complement our theoretical results, exhibiting the computational and
  statistical benefits of our approach.
\end{abstract}

\section{Introduction}

In non-parametric regression, the statistician receives $\totalobs$
samples of the form $\{(x_i, y_i)\}_{i=1}^\totalobs$, where each $x_i
\in \XC$ is a covariate and $y_i \in \R$ is a real-valued response,
and the samples are drawn i.i.d.\ from some unknown joint distribution
$\P$ over $\XC \times \R$.  The goal is to estimate a function
$\fapprox: \XC \rightarrow \R$ that can be used to predict future
responses based on observing only the covariates.  Frequently, the
quality of an estimate $\fapprox$ is measured in terms of the
mean-squared prediction error $\E[(\fapprox(X) - Y)^2]$, in which case
the conditional expectation $\fopt(x) = \E[Y \mid X =x]$ is optimal.
The problem of non-parametric regression is a classical one, and a
researchers have studied a wide range of estimators (see, e.g., the
books~\cite{GyorfiEtal,Wasserman2006,vandeGeer}).  One class of
methods, known as regularized $M$-estimators~\cite{vandeGeer}, are
based on minimizing the combination of a data-dependent loss function
with a regularization term.  The focus of this paper is a popular
$M$-estimator that combines the least-squares loss with a
squared Hilbert norm penalty for regularization.  When working in a
reproducing kernel Hilbert space (RKHS), the resulting method is known
as \emph{kernel ridge regression}, and is widely used in
practice~\citep{Hastie2001, Shawe-Taylor2004}.  Past work has
established bounds on the estimation error for RKHS-based
methods~\citep[e.g.,][]{Koltchinskii2006,Mendelson02,vandeGeer,Zhang2005b},
which have been refined and extended in more recent
\mbox{work~\citep[e.g.,][]{Steinwart2009}.}

Although the statistical aspects of kernel ridge regression (KRR) are
well-understood, the computation of the KRR estimate can be
challenging for large datasets.  In a standard
implementation~\citep{Saunders1998}, the kernel matrix must be
inverted, which requires costs $\order(\overallsize^3)$ and
$\order(\overallsize^2)$ in time and memory respectively.  Such
scalings are prohibitive when the sample size $\totalobs$ is large.
As a consequence, approximations have been designed to avoid the
expense of finding an exact minimizer. One family of approaches is
based on low-rank approximation of the kernel matrix; examples include
kernel PCA~\citep{Scholkopf1998b}, the incomplete Cholesky
decomposition~\citep{Fine2002}, or Nystr\"om
sampling~\citep{Williams2001}. These methods reduce the time
complexity to $\order(d\overallsize^2)$ or $\order(d^2\overallsize)$,
where $d\ll \overallsize$ is the preserved rank.  The associated
prediction error has only been studied very recently.  Concurrent work
by~\citet{Bach2013} establishes conditions on the maintained rank that
still guarantee optimal convergence rates; see the discussion for more
detail.
%
%
A second line of research has considered
early-stopping of iterative optimization algorithms for KRR, including
gradient descent~\citep{Yao2007,Raskutti2011} and conjugate gradient
methods~\citep{Blanchard2010}, where early-stopping provides
regularization against over-fitting and improves run-time. If the
algorithm stops after $t$ iterations, the aggregate time complexity is
$\order(t \overallsize^2)$.

In this work, we study a different decomposition-based
approach.  The algorithm is appealing in
its simplicity: we partition the dataset of size $\overallsize$
randomly into $\nummac$ equal sized subsets, and we compute the
kernel ridge regression estimate $\fapprox_i$ for each of the $i = 1,
\ldots, \nummac$ subsets independently, with a \emph{careful choice}
of the regularization parameter. The estimates are then averaged via
$\funcavg = (1 / \nummac) \sum_{i=1}^\nummac \fapprox_i$.  Our main
theoretical result gives conditions under which the average $\funcavg$
achieves the minimax rate of convergence over the underlying Hilbert
space.
%
%
Even using naive implementations of KRR, this decomposition gives time
and memory complexity scaling as $\order(\totalobs^3 / \nummac^2)$ and
$\order(\totalobs^2 / \nummac^2)$, respectively. Moreover, our
approach dovetails naturally with parallel and distributed
computation: we are guaranteed superlinear speedup with $\nummac$
parallel processors (though we must still communicate the function
estimates from each processor).
Divide-and-conquer approaches have been studied by several
authors, including \citet{McDonald2010} for perceptron-based
algorithms, \citet{Kleiner2012} in distributed versions of the
bootstrap, and \citet{Zhang2012a} for parametric smooth convex
optimization problems.  This paper demonstrates the potential benefits
of divide-and-conquer approaches for nonparametric and
infinite-dimensional regression problems.

One difficulty in solving each of the sub-problems independently is
how to choose the regularization parameter.  Due to the
infinite-dimensional nature of non-parametric problems, the choice of
regularization parameter must be made with
care~\citep[e.g.,][]{Hastie2001}.  An interesting consequence of our
theoretical analysis is in demonstrating that, even though each
partitioned sub-problem is based only on the fraction
$\totalobs/\nummac$ of samples, it is nonetheless \emph{essential to
  regularize the partitioned sub-problems as though they had all
  $\totalobs$ samples}.  Consequently, from a local point of view,
each sub-problem is under-regularized.  This %
``under-regularization'' allows the bias of each local estimate to be
very small, but it causes a detrimental blow-up in the variance.
However, as we prove, the $\nummac$-fold averaging underlying the
method reduces variance enough that the resulting estimator $\funcavg$
still attains optimal convergence rate.

The remainder of this paper is organized as follows.  We begin in
Section~\ref{sec:background} by providing background on the kernel
ridge regression estimate and discussing the assumptions that underlie
our analysis.  In Section~\ref{sec:main-result}, we present our main
theorems on the mean-squared error between the averaged estimate
$\funcavg$ and the optimal regression function $\fopt$.  We provide
both a result when the regression function $\fopt$ belongs to the
Hilbert space $\hilbertspace$ associated with the kernel, as well as a
more general oracle inequality that holds for a general $\fopt$.  We
then provide several corollaries that exhibit concrete consequences of
the results, including convergence rates of $\kerrank / \totalnumobs$
for kernels with finite rank $\kerrank$, and convergence rates of
$\totalnumobs^{-2 \smoothness / (2 \smoothness + 1)}$ for estimation
of functionals in a Sobolev space with $\smoothness$-degrees of
smoothness.  As we discuss, both of these estimation rates are
minimax-optimal and hence unimprovable.
We devote Sections~\ref{sec:proof-results-and-corollaries}
and~\ref{sec:proof-oracle} to the proofs of our results, deferring more
technical aspects of the analysis to
appendices. Lastly, we present simulation results in
Section~\ref{sec:experiments} to further explore our theoretical
results, while Section~\ref{sec:real-data} contains experiments with a
reasonably large music prediction experiment.


\section{Background and problem formulation}
\label{sec:background}

We begin with the background and notation required for a precise
statement of our problem.

\subsection{Reproducing kernels}

The method of kernel ridge regression is based on the idea of a
reproducing kernel Hilbert space.  We provide only a very brief
coverage of the basics here, referring the reader to one of the many books
on the topic
(e.g.,~\cite{Wahba,Shawe-Taylor2004,Berlinet2004,Gu2002}) for further
details.  Any symmetric and positive semidefinite kernel function
$\kernel: \XC \times \XC\to \R$ defines a reproducing kernel Hilbert
space (RKHS for short).  For a given distribution $\P$ on $\XC$, the
Hilbert space is strictly contained within $L^2(\P)$.  For each $x \in
\XC$, the function $z \mapsto \kernel(z, x)$ is contained with the
Hilbert space $\HilbertSpace$; moreover, the Hilbert space is endowed
with an inner product $\<\cdot, \cdot\>_\hilbertspace$ such that
$\kernel(\cdot, x)$ acts as the representer of evaluation, meaning
\begin{equation}
  \label{eqn:reproducing-kernel-relation}
  \<f, \kernel(x,\cdot )\>_\hilbertspace = f(x) ~~~ \mbox{for~} f
  \in \hilbertspace.
\end{equation}
We let $\hnorm{g} \defeq \sqrt{\<g, g\>_\hilbertspace}$ denote the
norm in $\hilbertspace$, and similarly $\ltwo{g} \defeq (\int_\XC
g(x)^2 d\P(x))^{1/2}$ denotes the norm in $\lsspace$.  Under suitable
regularity conditions, Mercer's theorem guarantees that the kernel has
an eigen-expansion of the form
\begin{equation*}
  K(x,x') = \sum_{j=1}^\infty \eigenvalue_j \base_j(x)\base_j(x'),
\end{equation*}
where $\eigenvalue_1 \geq \eigenvalue_2 \geq \dots \geq 0$ are a
non-negative sequence of eigenvalues, and $\{\base_j\}_{j=1}^\infty$
is an orthonormal basis
for $\lsspace$.

From the reproducing relation~\eqref{eqn:reproducing-kernel-relation},
we have $\langle \base_j, \base_j \rangle_\hilbertspace =
1/\eigenvalue_j$ for any $j$ and $\langle \base_j, \base_{j'}
\rangle_\hilbertspace = 0$ for any $j \neq j'$. For any $f
\in \HilbertSpace$, by defining the basis coefficients $\theta_j =
\inprod{f}{\base_j}_{L^2(\P)}$ for $j = 1, 2, \ldots$, we can expand
the function in terms of these coefficients as $f = \sum_{j=1}^\infty
\theta_j \basis_j$, and simple calculations show that
\begin{equation*}
  \ltwo{f}^2 = \int_\XC f^2(x) d \P(x) = \sum_{j=1}^\infty \theta_j^2,
  ~~~ \mbox{and} ~~~ \hnorm{f}^2 = \<f, f\> = \sum_{j=1}^\infty
  \frac{\theta_j^2}{\eigenvalue_j}.
\end{equation*}
Consequently, we see that the RKHS can be viewed as an elliptical subset
of the sequence space $\ell^2(\mathbb{N})$ as defined by the non-negative
eigenvalues $\{\eigenvalue_j\}_{j=1}^\infty$.


\subsection{Kernel ridge regression}

Suppose that we are given a data set
$\{(x_i,y_i)\}_{i=1}^\totalnumobs$ consisting of $\totalnumobs$
i.i.d.\ samples drawn from an unknown distribution $\P$ over $\Xspace
\times \real$, and our goal is to estimate the function that minimizes
the mean-squared error $\Exs[(f(X) - Y)^2]$, where the expectation is
taken jointly over $(X,Y)$ pairs.  It is well-known that the optimal
function is the conditional mean $\fstar(x) \defn \Exs[Y \mid X = x]$.
In order to estimate the unknown \mbox{function $\fstar$,} we consider
an $M$-estimator that is based on minimizing a combination of the
least-squares loss defined over the dataset with a weighted penalty
based on the squared Hilbert norm,
\begin{align}
  \label{eqn:krr-estimate}
  \funcapprox & \defeq \argmin_{f \in \HilbertSpace} \bigg \{
\frac{1}{\overallsize} \sum_{i=1}^\overallsize \left(f(x_i) -
y_i\right)^2 + \lambda \hnorm{f}^2 \bigg\},
\end{align}
where $\lambda > 0$ is a regularization parameter.  When
$\hilbertspace$ is a reproducing kernel Hilbert space, then the
estimator~\eqref{eqn:krr-estimate} is known as the \emph{kernel ridge
  regression estimate}, or KRR for short. It is a natural
generalization of the ordinary ridge regression
estimate~\cite{Hoerl70} to the non-parametric setting.

By the representer theorem for reproducing kernel Hilbert
spaces~\cite{Wahba}, any solution to the KRR
program~\eqref{eqn:krr-estimate} must belong to the linear span of the
kernel functions $\{\kernel(\cdot, x_i), \, i = 1, \ldots,
\totalnumobs\}$.  This fact allows the computation of the KRR estimate to
be reduced to an $\totalnumobs$-dimensional quadratic program,
involving the $\totalnumobs^2$ entries of the kernel matrix $\{
\kernel(x_i, x_j), \; i,j = 1, \ldots, \numobs\}$.  On the statistical
side, a line of past
work~\citep{vandeGeer,Zhang2005b,Caponnetto2007,Steinwart2009,Hsu2012} has
provided bounds on the estimation error of $\fapprox$ as a function of
$\totalnumobs$ and $\lambda$.


\section{Main results and their consequences}
\label{sec:main-result}

We now turn to the description of our algorithm, followed by the
statements of our main results, namely
Theorems~\ref{theorem:error-bound} and~\ref{theorem:oracle}.  Each
theorem provides an upper bound on the mean-squared prediction error
for any trace class kernel.  The second theorem is of ``oracle
type,'' meaning that it applies even when the true regression function
$\fstar$ does not belong to the Hilbert space $\HilbertSpace$, and
hence involves a combination of approximation and estimation error
terms.  The first theorem requires that $\fstar \in \HilbertSpace$,
and provides somewhat sharper bounds on the estimation error in this
case.  Both of these theorems apply to any trace class kernel, but as
we illustrate, they provide concrete results when applied to specific
classes of kernels.  Indeed, as a corollary, we establish that our
distributed KRR algorithm achieves the statistically minimax-optimal
rates for three different kernel classes, namely finite-rank, Gaussian
and Sobolev.


\subsection{Algorithm and assumptions}

The divide-and-conquer algorithm \algname\ is easy to describe. We are
given $\overallsize$ samples drawn i.i.d.\ according to the
distribution $\P$. Rather than solving the kernel ridge regression
problem~\eqref{eqn:krr-estimate} on all $\totalobs$ samples, the
\algname\ method executes the following three steps:
\begin{enumerate}
  \item Divide the set of samples $\{(x_1, y_1), \ldots, (x_\totalobs,
    y_\totalobs)\}$ evenly and uniformly at randomly into the $\nmachine$
    disjoint subsets $S_1, \ldots, S_\nmachine \subset \XC \times \R$.
  \item For each $i = 1, 2, \ldots, \nmachine$, compute the
    \emph{local KRR estimate}
    \begin{align}
      \label{eqn:sub-krr}
      \funcapprox_i \defeq \argmin_{f \in \HilbertSpace} \bigg\{
      \frac{1}{|S_i|} \sum_{(x,y)\in S_i} \left(f(x)-y\right)^2 +
      \lambda \hnorm{f}^2 \bigg\}.
    \end{align}
  \item \label{item:compute-average} Average together the local estimates and
    output $\funcavg = \frac{1}{\nmachine}\sum_{i=1}^\nmachine \funcapprox_i$.
\end{enumerate}
This description actually provides a family of estimators, one for each choice
of the regularization parameter $\lambda > 0$.  Our main result applies to any
choice of $\lambda$, while our corollaries for specific
kernel classes optimize $\lambda$ as a function of the kernel.

We now describe our main assumptions.  Our first assumption, for which
we have two variants, deals with the tail behavior of the basis
functions $\{\basis_j\}_{j=1}^\infty$.
\newcounter{savetwoassumption}
\setcounter{savetwoassumption}{\value{assumption}}
\begin{assumption}
  \label{assumption:kernel}
  For some $\moment \ge 2$, there is a constant $\momentbound <
  \infty$ such that $\E[\basis_j(X)^{2\moment}] \le
  \momentbound^{2\moment}$ for all $j = 1, 2, \ldots$.
\end{assumption}


\setcounter{assumption}{\value{savetwoassumption}}
\renewcommand{\theassumption}{\Alph{assumption}$^\prime$}

\noindent In certain cases, we show that sharper error guarantees can
be obtained by enforcing a stronger condition of uniform boundedness:
\begin{assumption}
  \label{assumption:bounded-kernel}
  There is a constant $\momentbound < \infty$ such that $\sup_{x \in
    \XC} |\basis_j(x)| \le \momentbound$ for all $j = 1, 2, \ldots$.
\end{assumption}
\renewcommand{\theassumption}{\Alph{assumption}}


Recalling that $\fstar(x) \defn \Exs[Y \, \mid X = x]$, our second
assumption involves the deviations of the zero-mean noise variables $Y
- \fstar(x)$.  In the simplest case, when $\fstar \in \HilbertSpace$,
we require only a bounded variance condition:
\newcounter{saveassumption}
\setcounter{saveassumption}{\value{assumption}}
\begin{assumption}
  \label{assu:function-space}
  The function $\functrue \in \HilbertSpace$, and for $x \in \XC$,
  we have $\E[(Y - \functrue(x))^2 \mid x] \le \stddev^2$.
\end{assumption}

\noindent When the function $\fopt \not \in \hilbertspace$, we require
a slightly stronger variant of this assumption.  For each $\lambdabase
\ge 0$, define
\begin{align}
  \label{eqn:new-pop-objective-function}
  \foptimalreg = \argmin_{f\in \HilbertSpace} \left\{\E\left[(f(X) -
    Y)^2\right] + \lambdabase \hnorm{f}^2\right\}.
\end{align}
Note that $\fstar = \fstar_0$ corresponds to the usual regression
function, though the infimum may not be attained for
$\lambdabase = 0$ (our analysis addresses such cases).
Since $\fstar \in L^2(\P)$, we are guaranteed that for each
$\lambdabase \geq 0$, the associated mean-squared error $\varreg^2(x)
\defeq \E[(y - \foptimalreg(x))^2 \mid x]$ is finite for almost every
$x$.  In this more general setting, the following assumption replaces
Assumption~\ref{assu:function-space}:

\setcounter{assumption}{\value{saveassumption}}
\renewcommand{\theassumption}{\Alph{assumption}$^\prime$}

\begin{assumption}
  \label{assumption:response-moment-bound}
  For any $\lambdabase \geq 0$, there exists a constant $\altvarreg <
  \infty$ such that $\altvarreg^4 = \E[\varreg^4(X)]$.
\end{assumption}
\renewcommand{\theassumption}{\Alph{assumption}}
This
condition with $\lambdabase = 0$ is slightly stronger than
Assumption~\ref{assu:function-space}.


\subsection{Statement of main results}

With these assumptions in place, we are now ready for the statements
of our main results.  All of our results give bounds on the
mean-squared estimation error $\Exs[\|\bar{f} - \fstar\|_2^2]$
associated with the averaged estimate $\bar{f}$ based on an assigning
$\numobs = \totalobs/\nummac$ samples to each of $\nummac$ machines.
Both theorem statements involve the following three kernel-related
quantities:
\begin{align}
  \label{eqn:define-shorthands}
  \tr(K) \defeq \sum_{j=1}^\infty\eigenvalue_j, ~~~
  \fulltracelambda \defeq
  \sum_{j=1}^\infty \frac{1}{1 + \lambda/\eigenvalue_j},
  ~~~\mbox{and}~~~
  \tailsum = \sum_{j = d+1}^\infty \eigenvalue_j.
\end{align}
The first quantity is the kernel trace, which serves a crude estimate
of the ``size'' of the kernel operator, and assumed to be finite.  The
second quantity $\fulltracelambda$, familiar from previous work on
kernel regression~\citep{Zhang2005b}, is known as the ``effective
dimensionality'' of the kernel $K$ with respect to $\lsspace$.
Finally, the quantity $\tailsum$ is parameterized by a positive
integer $d$ that we may choose in applying the bounds, and it
describes the tail decay of the eigenvalues of $K$.  For $d = 0$, note
that $\beta_0$ reduces to the ordinary trace.  Finally, both theorems
involve one further quantity that depends on the number of moments
$\moment$ in Assumption~\ref{assumption:kernel}, namely
\begin{equation}
  \maxlog \defeq \max\left\{\sqrt{\max\{\moment, \log(d)\}},
  \frac{\max\{\moment, \log(d)\}}{\numobs^{1/2 - 1/\moment}}\right\}.
  \label{eqn:define-maxlog}
\end{equation}
Here the parameter $d \in \N$ is a quantity that may be optimized
to obtain the sharpest possible upper bound and may be chosen arbitrarily.
(The algorithm's execution is independent of $d$.)

\begin{theorem}
  \label{theorem:error-bound}
  With $\fstar \in \HilbertSpace$ and under
  Assumptions~\ref{assumption:kernel} and~\ref{assu:function-space}, the
  mean-squared error of the averaged estimate $\bar{f}$ is upper bounded
  as
  \begin{align}
    \label{EqnGoodUpper}
    \E\left[\ltwo{\bar{f} - \fopt}^2\right] & \leq \left(8 +
    \frac{12}{\nummac}\right) \lambda \hnorm{\fopt}^2 + \frac{12
      \stddev^2 \fulltracelambda}{\totalobs} + \inf_{d \in \N}
    \big\{ \Term_1(d) + \Term_2(d) + \Term_3(d) \big\},
  \end{align}
  where
  \begin{align*}
    \Term_1(d) & = \frac{8 \momentbound^4 \hnorm{\fopt}^2 \tr(K)
      \tailsum}{\lambda}, \quad \Term_2(d) = \frac{4 \hnorm{\fopt}^2 + 2
      \stddev^2 / \lambda}{m} \left(\eigenvalue_{d + 1} + \frac{12
      \momentbound^4 \tr(K) \tailsum}{\lambda} \right), \quad \mbox{and}
    \\
    \Term_3(d) & = \left(C \maxlog \frac{\momentbound^2
      \fulltracelambda}{\sqrt{\numobs}} \right)^k \ltwo{\fopt}^2 \left(1 +
    \frac{2 \stddev^2}{\nummac \lambda} + \frac{4
      \hnorm{\fopt}^2}{\nummac}\right),
  \end{align*}
  and $C$ denotes a universal (numerical) constant.
\end{theorem}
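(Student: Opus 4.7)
My plan is to exploit the i.i.d.\ structure of the partition by writing $\funcapprox_1$ for the local KRR estimator on an arbitrary subset of size $\numobs$ and applying the standard bias--variance decomposition
\begin{equation*}
\E\big[\ltwo{\bar f - \fopt}^2\big] \;=\; \ltwo{\E[\funcapprox_1] - \fopt}^2 \;+\; \frac{1}{\nummac}\, \E\big[\ltwo{\funcapprox_1 - \E[\funcapprox_1]}^2\big],
\end{equation*}
so that the variance of a single local estimator enters with the crucial factor $1/\nummac$. Writing $f_\lambda \in \HilbertSpace$ for the population ridge minimizer $\argmin_{f\in \HilbertSpace}\{\E[(f(X)-Y)^2] + \lambda \hnorm{f}^2\}$, the standard comparison $\ltwo{f_\lambda - \fopt}^2 + \lambda \hnorm{f_\lambda}^2 \le \lambda \hnorm{\fopt}^2$ isolates the leading $\lambda \hnorm{\fopt}^2$ term in~\eqref{EqnGoodUpper}, and the task reduces to controlling $\ltwo{\E[\funcapprox_1] - f_\lambda}^2$ and $(1/\nummac)\E[\ltwo{\funcapprox_1 - f_\lambda}^2]$ separately.

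The variance piece is the easier half. Define the population covariance operator $\Sigma \defeq \E[\kernel(\cdot,X)\otimes \kernel(\cdot,X)]$ on $\HilbertSpace$ and its empirical counterpart $\widehat\Sigma$ on the local subset; the normal equations rearrange to
\begin{equation*}
\funcapprox_1 - f_\lambda \;=\; (\widehat\Sigma + \lambda I)^{-1}\!\left[\frac{1}{\numobs}\sum_{i=1}^{\numobs}(y_i - f_\lambda(x_i))\kernel(\cdot,x_i) \;-\; (\widehat\Sigma - \Sigma) f_\lambda\right],
\end{equation*}
and operator concentration of $(\widehat\Sigma + \lambda I)^{-1}$ around $(\Sigma + \lambda I)^{-1}$ yields $\E[\ltwo{\funcapprox_1 - f_\lambda}^2] \le \stddev^2 \fulltracelambda/\numobs$ plus lower-order corrections; dividing by $\nummac$ recovers the $\stddev^2 \fulltracelambda/\totalobs$ variance term in~\eqref{EqnGoodUpper}.

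The term $\ltwo{\E[\funcapprox_1] - f_\lambda}^2$ is the main obstacle: the naive bound $\ltwo{\E[\funcapprox_1] - f_\lambda}^2 \le \E[\ltwo{\funcapprox_1 - f_\lambda}^2]$ loses the entire benefit of averaging, leaving $\stddev^2 \fulltracelambda/\numobs$ in the final estimate instead of the required $\stddev^2 \fulltracelambda/\totalobs$. To save this factor of $\nummac$, I expand via the resolvent identity
\begin{equation*}
(\widehat\Sigma + \lambda I)^{-1} \;=\; (\Sigma + \lambda I)^{-1} \;-\; (\widehat\Sigma + \lambda I)^{-1}(\widehat\Sigma - \Sigma)(\Sigma + \lambda I)^{-1},
\end{equation*}
so that $\funcapprox_1 - f_\lambda$ splits into a linear-in-data piece, whose mean vanishes by design, and a higher-order remainder that is quadratic in the empirical deviations $\widehat\Sigma - \Sigma$ and the noise. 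Taking expectations kills the linear term and will produce precisely the higher-order contributions $\Term_1(d) + \Term_2(d) + \Term_3(d)$ from the remainder.

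The sharpest step, where Assumption~\ref{assumption:kernel} really enters, is bounding high-order moments of $\widehat\Sigma - \Sigma$ and of related empirical quantities under only the $2\moment$-th moment hypothesis on the basis functions $\{\basis_j\}$ rather than uniform boundedness. Here I split the eigenbasis at a free cutoff index $d$: on the top-$d$ block the operator fluctuations are controlled via Rosenthal/Marcinkiewicz--Zygmund type moment inequalities, producing the factor $(C\maxlog\, \momentbound^2 \fulltracelambda/\sqrt{\numobs})^{\moment}$ in $\Term_3(d)$, with $\maxlog$ absorbing the $\log d$ and $\numobs^{1/2 - 1/\moment}$ penalties inherent in concentration under a moment condition; on the tail I control the residual directly through $\sum_{j > d}\eigenvalue_j = \tailsum$, paired with $\tr(K)$ and the multiplier $\lambda^{-1}$ arising from $(\Sigma + \lambda I)^{-1}$, to obtain $\Term_1(d)$ and the tail part of $\Term_2(d)$; the remaining $\eigenvalue_{d+1}/\nummac$ summand in $\Term_2(d)$ records the leading tail eigenvalue entering the variance correction after the $1/\nummac$ reduction. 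Since $d$ never appears in the algorithm, the bound is finally optimized over $d \in \N$, giving the infimum in~\eqref{EqnGoodUpper}.
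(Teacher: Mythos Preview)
Your outline is sound and would yield a bound of the same order, but it follows a genuinely different route from the paper.  The paper never introduces the population minimizer $f_\lambda$ nor the resolvent identity.  Instead, because $\fopt\in\HilbertSpace$, it centers directly at $\fopt$ and \emph{conditions on the design} $X$: averaging the optimality condition over the noise gives the exact identity $(\outprodmat+\lambda I)\,\E[\esterr\mid X]=-\lambda\fopt$, so the bias analysis involves no noise whatsoever.  The argument then truncates the eigenbasis at level $d$, reduces to a $d\times d$ linear system, and controls everything on the ``good event'' $\event=\{\matrixnorm{Q^{-1}((1/\numobs)\basismat^T\basismat-I)Q^{-1}}\le 1/2\}$ via a matrix $k$-th moment inequality (Chen--Gittens--Tropp) together with the trivial bound $\ltwo{\E[\esterr\mid X]}\le\ltwo{\fopt}$ on $\event^c$; this is precisely where the factor $(C\maxlog\,\momentbound^2\fulltracelambda/\sqrt{\numobs})^\moment\ltwo{\fopt}^2$ in $\Term_3(d)$ comes from.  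The variance lemma is treated by the same truncation/event machinery rather than being the ``easy half''---it is where $\Term_2(d)$ is generated, via the crude a~priori bound $\E[\hnorm{\esterr}^2\mid X]\le 2\stddev^2/\lambda+4\hnorm{\fopt}^2$ applied to the tail block.

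Your resolvent-expansion approach (kill the linear term in expectation, bound the quadratic remainder by Rosenthal-type moment inequalities on the top-$d$ block and tail sums on the rest) is the more operator-theoretic line taken by, e.g., Caponnetto--De~Vito and Bach, and should recover the same rates; the paper's event-based argument is more elementary and matrix-flavored, and has the advantage that the specific form of $\Term_3(d)$---in particular the $\ltwo{\fopt}^2$ multiplier---falls out transparently from the bad-event contribution rather than from bounding a second-order remainder.  One small slip: the bracketed ``data term'' in your normal-equation display does not have mean zero as written (its expectation is $\lambda f_\lambda$); you want $\frac{1}{\numobs}\sum_i y_i\kernel(\cdot,x_i)-\E[Y\kernel(\cdot,X)]-(\widehat\Sigma-\Sigma)f_\lambda$ instead, but this is cosmetic and does not affect the strategy.
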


Theorem~\ref{theorem:error-bound} is a general result that applies to
any trace-class kernel.  Although the statement appears somewhat
complicated at first sight, it yields concrete and interpretable
guarantees on the error when specialized to particular kernels, as we
illustrate in Section~\ref{sec:corollaries}.

Before doing so, let us provide a few heuristic arguments for
intuition.  In typical settings, the term $T_3(d)$ goes
to zero quickly: if the number of moments $\moment$ is large and
number of partitions $\nummac$ is small---say enough to guarantee that
$(\fulltracelambda^2 \totalobs / \nummac)^{-k/2} = \order(1 /
\totalobs)$---it will be of lower order. As for the remaining terms,
at a high level, we show that an appropriate choice of the free
parameter $d$ leaves the first two terms in the upper
bound~\eqref{EqnGoodUpper} dominant.  Note that the terms
$\eigenvalue_{d+1}$ and $\tailsum$ are decreasing in $d$ while the
term $\maxlog$ increases with $d$. However, the increasing term
$\maxlog$ grows only logarithmically in $d$, which allows us to choose
a fairly large value without a significant penalty.  As we show in our
corollaries, for many kernels of interest, as long as the number of
machines $\nummachine$ is not ``too large,'' this tradeoff is such
that $\Term_1(d)$ and $\Term_2(d)$ are also of lower order compared to
the two first terms in the bound~\eqref{EqnGoodUpper}.  In such
settings, Theorem~\ref{theorem:error-bound} guarantees an upper bound
of the form
\begin{equation}
  \E\left[\ltwo{\bar{f} - \fopt}^2\right] = \order \Big(
  \underbrace{\lambda \hnorm{\fopt}^2}_{\mbox{\scriptsize{Squared
        bias}}} \; + \; \underbrace{\frac{\stddev^2
      \fulltracelambda}{\totalobs}}_{\mbox{\scriptsize{Variance}}}
  \Big).
  \label{eqn:competing-terms}
\end{equation}
This inequality reveals the usual bias-variance trade-off in
non-parametric regression; choosing a smaller value of $\lambda > 0$
reduces the first squared bias term, but increases the second variance
term.  Consequently, the setting of $\lambda$ that minimizes the sum
of these two terms is defined by the relationship
\begin{align}
  \label{EqnOptimalLambda}
  \lambda \hnorm{\fopt}^2 & \; \simeq \; \stddev^2
  \frac{\fulltracelambda}{\totalobs}.
\end{align}
This type of fixed point equation is familiar from work on oracle inequalities
and local complexity measures in empirical process
theory~\citep{Bartlett2005,Koltchinskii2006, vandeGeer,Zhang2005b}, and when
$\lambda$ is chosen so that the fixed point equation~\eqref{EqnOptimalLambda}
holds this (typically) yields minimax optimal convergence
rates~\cite{Bartlett2005,Koltchinskii2006,Zhang2005b,Caponnetto2007}.
In Section~\ref{sec:corollaries}, we provide detailed examples in which
the choice $\lambda^*$ specified by equation~\eqref{EqnOptimalLambda},
followed by application of Theorem~\ref{theorem:error-bound}, yields
minimax-optimal prediction error (for the \algname\ algorithm)
for many kernel classes. \\

\vspace*{.05in}

We now turn to an error bound that applies without requiring that
$\fopt \in \hilbertspace$.  Define the radius $\radius =
\hnorm{\foptimalreg}$, where the population regression function
$\foptimalreg$ was previously
defined~\eqref{eqn:new-pop-objective-function}.  The theorem requires
a few additional conditions to those in
Theorem~\ref{theorem:error-bound}, involving the quantities
$\tr(K)$, $\fulltracelambda$ and $\tailsum$
defined in Eq.~\eqref{eqn:define-shorthands}, as well as the error moment
$\altvarreg$ from Assumption~\ref{assumption:response-moment-bound}.
We assume that the triplet $(\nmachine, d, \moment)$ of positive
integers satisfy the conditions
\begin{equation}
  \label{eqn:oracle-condition}
  \begin{split}
    & \tailsum \le \frac{\lambda}{
      (\radius^2 + \altvarreg^2/\lambda)\overallsize},
    ~~~~ \eigenvalue_{d + 1} \le
    \frac{1}{(\radius^2 + \altvarreg^2 / \lambda) \totalnumobs}, \\
    %
    & \nmachine \leq \min\left\{
    \frac{\sqrt{\overallsize}}{\momentbound^2\fulltracelambda \log(d)},
    \frac{\overallsize^{1 - \frac{2}{\moment}}}{
      (\radius^2 + \altvarreg^2/\lambda)^{2/k}
      (\maxlog \momentbound^2\fulltracelambda)^2}\right\}.
  \end{split}
\end{equation}
We then have the following result:
\begin{theorem}
\label{theorem:oracle}
Under condition~\eqref{eqn:oracle-condition},
Assumption~\ref{assumption:kernel} with $\moment \geq 4$, and
Assumption~\ref{assumption:response-moment-bound}, for any $\lambda
\geq \lambdabase$ and $q > 0$ we have
\begin{align}
\label{EqnOracleBound}
\E \left[ \ltwo{\favg-\foptimal}^2 \right] & \leq \left(1 +
\frac{1}{q}\right) \inf_{\hnorm{f} \leq \radius} \ltwo{ f -
  \foptimal}^2 + (1 + q) \, \RES
\end{align}
where the residual term is given by
\begin{align}
\label{EqnDefnResidual}
\RES & \defn \bigg(\Big (4 + \frac{C}{\nummac} \Big)
(\lambda-\lambdabase) \radius^2 + \frac{C \fulltracelambda
  \momentbound^2 \altvarreg^2 }{\overallsize} + \frac{C}{\totalnumobs}
\bigg),
\end{align}
and $C$ denotes a universal (numerical) constant.
\end{theorem}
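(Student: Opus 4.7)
The plan is to reduce the oracle inequality to an application of (a variant of) Theorem~\ref{theorem:error-bound} with the regularized population minimizer $\foptimalreg$ playing the role of the target. Although $\foptimal$ need not lie in $\HilbertSpace$, the function $\foptimalreg$ does, with Hilbert norm exactly $\radius$, so the Theorem~\ref{theorem:error-bound} machinery can be applied to it.

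\emph{Step 1: decomposition and identification of the approximation term.} For any $q > 0$, I will apply the elementary bound $\ltwo{a+b}^2 \le (1+q)\ltwo{a}^2 + (1+1/q)\ltwo{b}^2$ with $a = \favg - \foptimalreg$ and $b = \foptimalreg - \foptimal$. The second term equals the approximation-error term in~\eqref{EqnOracleBound}: indeed, by convex duality, $\foptimalreg$ simultaneously solves the penalized problem~\eqref{eqn:new-pop-objective-function} and the constrained problem $\inf_{\hnorm{f}\le \radius} \E[(f(X)-Y)^2]$ with the tight radius $\radius = \hnorm{\foptimalreg}$, and since $\foptimal(x)=\E[Y \mid X=x]$ this constrained problem is equivalent to $\inf_{\hnorm{f}\le \radius}\ltwo{f-\foptimal}^2$. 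Thus $\ltwo{\foptimalreg - \foptimal}^2 = \inf_{\hnorm{f}\le \radius}\ltwo{f-\foptimal}^2$, giving the first summand in~\eqref{EqnOracleBound} after taking expectations.

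\emph{Step 2: estimation error against $\foptimalreg$.} It remains to show $\E\ltwo{\favg - \foptimalreg}^2 \lesssim \RES$. I will re-run the proof of Theorem~\ref{theorem:error-bound} with $\foptimalreg$ in place of $\foptimal$, $\radius$ in place of $\hnorm{\foptimal}$, and the noise moment $\altvarreg$ in place of $\stddev$, writing each label as $y_i = \foptimalreg(x_i) + \xi_i$ with $\xi_i := y_i - \foptimalreg(x_i)$. Under Assumption~\ref{assumption:response-moment-bound} one has $\E[\varreg^4(X)] = \altvarreg^4$, which supplies the needed variance control. The genuinely new twist is that $\xi_i$ is \emph{not} conditionally centered: $\E[\xi_i \mid x_i] = \foptimal(x_i) - \foptimalreg(x_i)$. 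To absorb this drift I will use the first-order (KKT) characterization of $\foptimalreg$, namely
\begin{align*}
\E\big[(\foptimalreg(X) - Y)\,h(X)\big] + \lambdabase\,\langle \foptimalreg,h\rangle_{\HilbertSpace} = 0 \qquad \forall\, h\in\HilbertSpace,
\end{align*}
which says that in the RKHS-weighted sense the noncentered part of $\xi_i$ is precisely $-\lambdabase\,\foptimalreg$. Because the algorithm uses penalty $\lambda\ge\lambdabase$, this mismatch contributes an extra bias of order $(\lambda-\lambdabase)\radius^2$, matching the first summand of $\RES$, and the variance contribution inherits the Theorem~\ref{theorem:error-bound} form $\fulltracelambda\,\momentbound^2\,\altvarreg^2/\overallsize$, matching the second summand.

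\emph{Step 3: killing $T_1, T_2, T_3$.} Conditions~\eqref{eqn:oracle-condition} are calibrated precisely to make the three correction terms of Theorem~\ref{theorem:error-bound} each of order $1/\overallsize$: the bound on $\tailsum$ kills $T_1$ and the tail part of $T_2$; the bound on $\eigenvalue_{d+1}$ kills the leading part of $T_2$; and the two-sided bound on $\nummac$, together with $\moment\ge 4$ and the factor $\maxlog$, drives $T_3$ below $C/\overallsize$. This produces the final $C/\overallsize$ summand of $\RES$, and assembling Steps~1--3 yields~\eqref{EqnOracleBound}.

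\emph{Main obstacle.} The most delicate point is Step~2: re-running the Theorem~\ref{theorem:error-bound} argument with noncentered residuals. The sample-operator concentration and Neumann-series expansions used there assume centered noise, so one must re-derive them in the biased setting, carefully using the KKT identity to trade the non-centered component for the penalty mismatch $(\lambda-\lambdabase)\radius^2$ without inflating the approximation term by the factor $(1+1/q)$. Tracking the joint dependence on $\radius^2+\altvarreg^2/\lambda$ through the $T_j$ bounds so that conditions~\eqref{eqn:oracle-condition} really do suppress them to $C/\overallsize$ is routine but tedious.
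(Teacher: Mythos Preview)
Your proposal is correct and follows essentially the same route as the paper: the decomposition via Young's inequality together with the identification $\ltwo{\foptimalreg-\foptimal}^2=\inf_{\hnorm{f}\le\radius}\ltwo{f-\foptimal}^2$ is exactly the paper's Step~1, and the paper then proves analogues of the bias and variance lemmas (its Lemmas~\ref{lemma:oracle-variance-bound} and~\ref{lemma:oracle-norm-expectation-bound}) for the target $\foptimalreg$, using precisely the KKT identity you wrote to absorb the noncentered part of the residual into the $(\lambda-\lambdabase)\radius^2$ bias term, and finally invokes condition~\eqref{eqn:oracle-condition} to reduce the remaining correction terms to $C/\overallsize$. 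Your identification of the noncentered-noise handling as the main technical obstacle is accurate; the paper addresses it exactly as you outline (see its Lemma~\ref{LemFinal}).
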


\paragraph{Remarks:}  Theorem~\ref{theorem:oracle} is an instance of an
oracle inequality, since it upper bounds the mean-squared error in
terms of the error $\inf \limits_{\|f\|_\HilbertSpace \leq R} \|f -
\fstar\|_2^2$, which may only be obtained by an oracle knowing the
sampling distribution $\P$, plus the residual error
term~\eqref{EqnDefnResidual}.

In some situations, it may be difficult to verify
Assumption~\ref{assumption:response-moment-bound}.  In such scenarios,
an alternate condition suffices.  For instance, if there exists a
constant $\ymoment < \infty$ such that $\E[Y^4]\leq \ymoment^4$, then
the bound~\eqref{EqnOracleBound} holds (under condition~\eqref{eqn:oracle-condition}) with $\altvarreg^2$ replaced by
$\sqrt{8\tr(K)^2 \radius^4 \momentbound^4 + 8\ymoment^4}$---that is,
with the alternative residual error
\begin{align}
\label{EqnDefnResidualAlt}
\ALTRES & \defn \bigg(\Big (2 + \frac{C}{\nummac} \Big)
(\lambda-\lambdabase) \radius^2 + \frac{C \fulltracelambda
  \momentbound^2 \sqrt{8\tr(K)^2 \radius^4 \momentbound^4 +
    8\ymoment^4} }{\overallsize} + \frac{C}{\totalnumobs} \bigg).
\end{align}
In essence, if the response variable $Y$ has sufficiently many moments, the
prediction mean-square error $\altvarreg^2$ in the statement of
Theorem~\ref{theorem:oracle} can be replaced constants related to the size of
$\hnorm{\foptimalreg}$. See Section~\ref{sec:proof-residual-alternate} for a
proof of inequality~\eqref{EqnDefnResidualAlt}. 


In comparison with Theorem~\ref{theorem:error-bound},
Theorem~\ref{theorem:oracle} provides somewhat looser bounds. It is,
however, instructive to consider a few special cases. For the first,
we may assume that $\fopt \in \HilbertSpace$, in which case
$\hnorm{\fopt} < \infty$. In this setting, the choice $\lambdabase = 0$
(essentially) recovers Theorem~\ref{theorem:error-bound}, since there
is no approximation error. Taking $q \rightarrow 0$, we are thus left
with the bound
\begin{align}
\Exs\|\favg - \fstar\|_2^2] & \; \precsim \; \lambda \,
  \hnorm{\fopt}^2 + \frac{\fulltracelambda\momentbound^2\tau_0^2
  }{\overallsize},
\end{align}
where $\precsim$ denotes an inequality up to constants.  By
inspection, this bound is roughly equivalent to
Theorem~\ref{theorem:error-bound}; see in particular the
decomposition~\eqref{eqn:competing-terms}.
On the other hand, when the condition $\fopt \in \HilbertSpace$ fails
to hold, we can take $\lambdabase =\lambda$, and then choose $q$ and
$\lambdabase$ to balance between the familiar approximation and
estimation errors. In this case, we have
\begin{align}
\Exs[\|\fbar - \fstar\|_2^2] & \precsim \left(1 + \frac{1}{q}\right) \;
\underbrace{\inf_{\hnorm{f} \le \radius} \ltwo{f - \fopt}^2}_{
  \mbox{approximation}} ~+~ (1 + q) \underbrace{\left( \frac{
    \fulltracelambda \momentbound^2 \altvarreg^2}{\totalnumobs}
  \right)}_{ \mbox{estimation}}.
\end{align}

The condition~\eqref{eqn:oracle-condition} required to apply
Theorem~\ref{theorem:oracle} imposes constraints on the number
$\nummac$ of subsampled data sets that are stronger than
those of Theorem~\ref{theorem:error-bound}. In particular, when
ignoring constants and logarithm terms, the quantity $\nummac$
may grow at rate $\sqrt{\totalnumobs/\fulltracelambdasq}$.
By contrast, Theorem~\ref{theorem:error-bound} allows $\nummac$
to grow as quickly as $\totalnumobs/\fulltracelambdasq$ (recall
the remarks on $\Term_3(d)$ following Theorem~\ref{theorem:error-bound} or
look ahead
to condition~\eqref{eqn:general-num-machines}). Thus---at least
in our current analysis---generalizing to the case that
$\fopt \not\in \hilbertspace$ prevents
us from dividing the data into finer subsets.


Finally, it is worth noting that in practice, the optimal choice for
the regularization $\lambda$ may not be known \emph{a priori}. Thus it
seems that an adaptive choice of the regularization $\lambda$ would be
desirable (see, for example, \citet[Chapter
  3]{Tsybakov2009}). Cross-validation or other types of unbiased risk
estimation are natural choices, however, it is at this point unclear
how to achieve such behavior in the distributed setting we study, that
is, where estimates $\fapprox_i$ depend only on the $i$th local
dataset.  We leave this as an open question.


\subsection{Some consequences}
\label{sec:corollaries}

We now turn to deriving some explicit consequences of our main
theorems for specific classes of reproducing kernel Hilbert spaces.
In each case, our derivation follows the broad outline given the the
remarks following Theorem~\ref{theorem:error-bound}: we first choose
the regularization parameter $\lambda$ to balance the bias and
variance terms, and then show, by comparison to known minimax lower
bounds, that the resulting upper bound is optimal.  Finally, we derive
an upper bound on the number of subsampled data sets $\nummac$ for
which the minimax optimal convergence rate can still be achieved.

\subsubsection{Finite-rank kernels}

Our first corollary applies to problems for which the kernel has
finite rank $r$, meaning that its eigenvalues satisfy $\eigenvalue_j =
0$ for all $j > r$.  Examples of such finite rank kernels include the
linear kernel $K(x, x') = \inprod{x}{x'}_{\real^d}$, which has rank at
most $r = d$; and the kernel $K(x, x) = (1 + x \, x')^m$ generating
polynomials of degree $m$, which has rank at most $r = m + 1$.
\begin{corollary}
  \label{corollary:low-rank-kernel}
  For a kernel with rank $r$, consider the output of the
  \algname\ algorithm with \mbox{$\lambda = \kerrank / \totalobs$.}
  Suppose that Assumption~\ref{assu:function-space} and
  Assumptions~\ref{assumption:kernel}
  (or~\ref{assumption:bounded-kernel}) hold, and that the number of
  processors $\nummac$ satisfy the bound
  \begin{equation*}
    \nummac \le c \frac{\totalobs^{\frac{\moment - 4}{\moment - 2}}}{\kerrank^2
      \momentbound^{\frac{4 \moment}{\moment - 2}}
      \log^{\frac{\moment}{\moment - 2}} \kerrank} ~~
    \mbox{(Assumption~\ref{assumption:kernel})} ~~~~ \mbox{or} ~~~~ \nummac
    \le c \frac{\totalobs}{\kerrank^2 \momentbound^4 \log \totalobs} ~~
    \mbox{(Assumption~\ref{assumption:bounded-kernel})},
  \end{equation*}
  where $c$ is a universal (numerical) constant. For
  suitably large $\totalobs$, the mean-squared error
  is bounded as
  \begin{align}
    \label{EqnFiniteRankRate}
    \E \left[ \ltwo{\funcavg - \fopt}^2\right] & = \order(1)
    \frac{\stddev^2 \kerrank}{\totalobs}.
  \end{align}
\end{corollary}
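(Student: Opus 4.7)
The plan is to specialize Theorem~\ref{theorem:error-bound} to finite-rank kernels by making two judicious choices: the regularization parameter $\lambda = r/N$, as stated, and the free integer $d = r$ in the infimum over $d \in \N$. Since $\mu_j = 0$ for all $j > r$, this choice forces $\beta_r = \sum_{j>r}\mu_j = 0$ and $\mu_{r+1} = 0$, so both $\Term_1(r)$ and $\Term_2(r)$ vanish identically. It also makes $\tr(K) \le r \mu_1$ finite and bounds the effective dimension by $\gamma(\lambda) \le \sum_{j=1}^r 1 = r$, independent of $\lambda$.

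With these simplifications in hand, I would then read off the two ``leading'' contributions in the bound~\eqref{EqnGoodUpper}. The bias-type term $(8 + 12/m)\lambda\|\fopt\|_\H^2 = (8 + 12/m)(r/N)\|\fopt\|_\H^2 = O(r/N)$, and the variance-type term $12\sigma^2\gamma(\lambda)/N \le 12\sigma^2 r / N$, together deliver the claimed rate $O(\sigma^2 r / N)$. The constant $\|\fopt\|_\H^2$ is absorbed into the $O(1)$ prefactor, using Assumption~\ref{assu:function-space} to ensure it is finite.

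The remaining work—and the main technical obstacle—is to show that the residual term $\Term_3(r)$ is of strictly smaller order than $r/N$, so it can be absorbed. Plugging in $\gamma(\lambda) \le r$ and $n = N/m$, the base of the $k$th-power expression becomes $C\,\maxlog\,b^2 r/\sqrt{N/m} = C\,\maxlog\,b^2 r\sqrt{m/N}$, while the prefactor $\|\fopt\|_2^2(1 + 2\sigma^2/(m\lambda) + 4\|\fopt\|_\H^2/m)$ is at most $O(\|\fopt\|_2^2 + \sigma^2 N/(m r))$ after substituting $\lambda = r/N$. Under Assumption~\ref{assumption:kernel} (moment $k$), $\maxlog \asymp \sqrt{\max(k,\log r)}$ once $n$ is large enough that the second branch of~\eqref{eqn:define-maxlog} does not dominate. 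The constraint on $m$ in the moment case is engineered precisely so that $\bigl(C\,\maxlog\,b^2 r\sqrt{m/N}\bigr)^k \le c\,N^{-k/(k-2)}$, which for $k \ge 4$ gives a factor of at least $N^{-2}$; multiplying by the prefactor $O(\sigma^2 N /(m r))$ still leaves $o(r/N)$. Under Assumption~\ref{assumption:bounded-kernel}, the $2k$th moment bound holds for every $k$, so one can effectively take $k \to \infty$ in the same calculation, which is exactly what yields the more permissive $m \le cN/(r^2 b^4 \log N)$.

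Putting these pieces together, Theorem~\ref{theorem:error-bound} reduces to $\E\|\bar f - \fopt\|_2^2 \le O(1)\,\sigma^2 r/N + o(r/N)$, which gives~\eqref{EqnFiniteRankRate} for sufficiently large $N$. The genuinely delicate step is the bookkeeping in the $\Term_3$ estimate: one must track how the exponent $k$, the logarithmic factor $\log r$, and the factor $b$ combine in $\maxlog\,b^2 r\sqrt{m/N}$ so that the stated threshold on $m$ forces $\Term_3(r)$ below $r/N$ rather than merely matching it; everything else is essentially substitution.
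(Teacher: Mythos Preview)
Your approach is correct and matches the paper's: set $d = r$ so that $\beta_r = \mu_{r+1} = 0$ kills $T_1$ and $T_2$, bound $\gamma(\lambda) \le r$, and use the constraint on $m$ to force $T_3(r) = O(r/N)$; the paper first packages the $T_3$ analysis into a general threshold for $m$ (its inequality~\eqref{eqn:general-num-machines}) before specializing, and for Assumption~\ref{assumption:bounded-kernel} takes $k = \log N$ explicitly rather than arguing ``$k \to \infty$,'' but this is cosmetic. One minor slip in your bookkeeping: for $k > 4$ the exponent $-k/(k-2)$ lies strictly between $-2$ and $-1$, so your ``at least $N^{-2}$'' holds only at $k = 4$, and the resulting $T_3(r)$ is $O(r/N)$ rather than $o(r/N)$---but $O(r/N)$ is all that is required.
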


For finite-rank kernels, the rate~\eqref{EqnFiniteRankRate} is known
to be minimax-optimal, meaning that there is a universal constant $c'
> 0$ such that
\begin{align}
  \inf_{\ftil} \sup_{\fstar \in \Ball_\HilbertSpace(1)} \Exs[\|\ftil -
    \fstar\|_2^2] & \geq c' \, \frac{\kerrank}{\totalobs},
\end{align}
where the infimum ranges over all estimators $\ftil$ based on
observing all $\totalobs$ samples (and with no constraints on memory
and/or computation).  This lower bound follows from Theorem 2(a) of
Raskutti et al.~\cite{RasWaiYu12} with $s = d = 1$.

\subsubsection{Polynomially decaying eigenvalues}

Our next corollary applies to kernel operators with eigenvalues
that obey a bound of the form
\begin{align}
  \label{EqnPolyDecay}
  \eigenvalue_j & \leq C \, j^{-2 \smoothness} \quad \mbox{for all $j =
    1, 2, \ldots$,}
\end{align}
where $C$ is a universal constant, and $\smoothness > 1/2$
parameterizes the decay rate.  Kernels with polynomial decaying
eigenvalues include those that underlie for the Sobolev spaces with
different smoothness orders~\cite[e.g.][]{Birman1967,Gu2002}.  As a
concrete example, the first-order Sobolev kernel $K(x, x') = 1 + \min
\{x, x'\}$ generates an RKHS of Lipschitz functions with smoothness
$\smoothness = 1$.  Other higher-order Sobolev kernels also exhibit
polynomial eigendecay with larger values of the parameter
$\smoothness$.

\begin{corollary}
  \label{corollary:sobolev-kernel}
  For any kernel with $\smoothness$-polynomial
  eigendecay~\eqref{EqnPolyDecay}, consider the output of the
  \algname\ algorithm with $\lambda = (1/\totalobs)^{\frac{2
      \smoothness}{2 \smoothness+1}}$.  Suppose that
  Assumption~\ref{assu:function-space} and
  Assumption~\ref{assumption:kernel}
  (or~\ref{assumption:bounded-kernel}) hold, and that the number of
  processors satisfy the bound
  \begin{equation*}
    \nummac \le c \Bigg(\frac{\totalobs^{\frac{2(\moment - 4)\smoothness
          - \moment}{(2 \smoothness + 1)}}}{ \momentbound^{4 \moment}
      \log^\moment \totalobs} \Bigg)^{\frac{1}{\moment - 2}} ~~
    \mbox{(Assumption~\ref{assumption:kernel})} ~~~~ \mbox{or} ~~~~
    \nummac \le c \frac{\totalobs^{\frac{2 \smoothness - 1}{2 \smoothness
          + 1}}}{\momentbound^4 \log \totalobs} ~~
    \mbox{(Assumption~\ref{assumption:bounded-kernel})},
  \end{equation*}
  where $c$ is a constant only depending on $\smoothness$.  Then the
  mean-squared error is bounded as
  \begin{align}
    \label{EqnSobolevBound}
    \E \left[ \ltwo{\funcavg - \fopt}^2 \right] & = \order \left( \Big
    (\frac{\sigma^2}{\totalobs} \Big)^{\frac{2 \smoothness}{2
        \smoothness + 1}} \right).
  \end{align}
\end{corollary}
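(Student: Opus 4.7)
The plan is to specialize Theorem~\ref{theorem:error-bound} to the polynomial eigendecay~\eqref{EqnPolyDecay}, choosing the free parameter $d$ so the two leading terms of~\eqref{EqnGoodUpper} match the target rate and the residuals $\Term_1(d), \Term_2(d), \Term_3(d)$ are of lower order. The core computation is the effective dimension: for $\eigenvalue_j \leq C j^{-2\smoothness}$, an integral comparison yields
\begin{equation*}
\fulltracelambda \;=\; \sum_{j=1}^\infty \frac{1}{1+\lambda/\eigenvalue_j} \;\leq\; \sum_{j=1}^\infty \frac{1}{1+\lambda j^{2\smoothness}/C} \;=\; O\bigl(\lambda^{-1/(2\smoothness)}\bigr).
\end{equation*}
With $\lambda = \totalobs^{-2\smoothness/(2\smoothness+1)}$ this gives $\fulltracelambda = O(\totalobs^{1/(2\smoothness+1)})$, so the leading terms $\lambda \hnorm{\fopt}^2 + \stddev^2 \fulltracelambda/\totalobs$ are of the claimed order $(\stddev^2/\totalobs)^{2\smoothness/(2\smoothness+1)}$, balancing bias and variance in the sense of~\eqref{EqnOptimalLambda}.

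Next I would bound the tail via $\tailsum = \sum_{j>d}\eigenvalue_j = O(d^{-(2\smoothness-1)})$, which is finite since $\smoothness > 1/2$. Substituting into the definitions in Theorem~\ref{theorem:error-bound} gives
\begin{equation*}
\Term_1(d) \;\leq\; c\,\frac{d^{-(2\smoothness-1)}}{\lambda}, \qquad \Term_2(d) \;\leq\; \frac{c}{\nummac}\Bigl(d^{-2\smoothness} + \frac{d^{-(2\smoothness-1)}}{\lambda}\Bigr),
\end{equation*}
and taking $d$ to be a sufficiently large polynomial in $\totalobs$ (so that $d^{-(2\smoothness-1)}/\lambda \ll \totalobs^{-2\smoothness/(2\smoothness+1)}$) drives both terms below the target rate. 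Since $\maxlog$ depends on $d$ only through $\log d$, this choice of $d$ contributes at most logarithmic factors to the subsequent analysis of $\Term_3$.

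The delicate piece is $\Term_3(d) = (C\maxlog\,\momentbound^2 \fulltracelambda/\sqrt{\numobs})^\moment$ multiplied by a factor $1 + \stddev^2/(\nummac\lambda) + \hnorm{\fopt}^2/\nummac$. Using $\numobs = \totalobs/\nummac$ and the bound on $\fulltracelambda$, the base of the $\moment$-th power is (up to constants and log factors)
\begin{equation*}
\maxlog\,\momentbound^2 \sqrt{\nummac}\; \totalobs^{\frac{1}{2\smoothness+1} - \frac{1}{2}}.
\end{equation*}
The constraint on $\nummac$ in the corollary is exactly what forces this base below a fixed constant less than one, so that raising it to the $\moment$-th power shrinks fast enough to absorb the $\totalobs^{2\smoothness/(2\smoothness+1)}$ blow-up coming from $1/(\nummac\lambda)$ and still leave a residue of order $\totalobs^{-2\smoothness/(2\smoothness+1)}$. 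Under Assumption~\ref{assumption:kernel}, the extra factor $\max\{\moment,\log d\}/\numobs^{1/2-1/\moment}$ inside $\maxlog$ propagates through and, after solving for $\nummac$, produces the exponent $(2(\moment-4)\smoothness-\moment)/((2\smoothness+1)(\moment-2))$ in the stated bound; under Assumption~\ref{assumption:bounded-kernel}, $\moment$ may effectively be taken arbitrarily large (the moment bound in the proof of Theorem~\ref{theorem:error-bound} being replaced by an exponential concentration bound valid under uniform boundedness), eliminating the $\numobs^{-(1/2-1/\moment)}$ factor and yielding the cleaner constraint $\nummac \leq c\,\totalobs^{(2\smoothness-1)/(2\smoothness+1)}/(\momentbound^4 \log \totalobs)$.

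The main obstacle is this last balancing: verifying, for both moment regimes, that the stated bound on $\nummac$ simultaneously pushes $\Term_3(d)$ below the minimax rate while remaining compatible with the (much weaker) constraints needed to control $\Term_1(d)$ and $\Term_2(d)$. Once this balancing is done, the corollary follows by direct substitution into Theorem~\ref{theorem:error-bound}.
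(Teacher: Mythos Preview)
Your proposal is correct and follows essentially the same route as the paper: bound $\fulltracelambda$ by an integral comparison, pick $d$ as a fixed power of $\totalobs$ so that $\tailsum$ and $\eigenvalue_{d+1}$ are polynomially small (the paper uses $d = \totalobs^{3/(2\smoothness-1)}$), and then solve the resulting inequality on $\Term_3(d)$ for $\nummac$ (the paper packages this step as the general inequality~\eqref{eqn:general-num-machines} derived in the proof of Corollary~\ref{corollary:low-rank-kernel}). One small correction on the bounded case: the paper does \emph{not} swap in an exponential concentration bound---it simply applies Theorem~\ref{theorem:error-bound} with $\moment = \log \totalobs$, which is legitimate because Assumption~\ref{assumption:bounded-kernel} makes the moment bound hold for every $\moment$ with the same constant $\momentbound$, and this choice already kills the $\numobs^{-(1/2 - 1/\moment)}$ factor.
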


The upper bound~\eqref{EqnSobolevBound} is unimprovable up to constant
factors, as shown by known minimax bounds on estimation error in
Sobolev spaces~\cite{Stone82,Tsybakov2009}; see also Theorem 2(b) of
Raskutti et al.~\cite{RasWaiYu12}.


\subsubsection{Exponentially decaying eigenvalues}

Our final corollary applies to kernel operators with eigenvalues that
obey a bound of the form
\begin{align}
  \label{EqnExpDecay}
  \eigenvalue_j & \leq c_1 \, \exp(-c_2 j^2)  \quad \mbox{for all $j =
    1, 2, \ldots$,}
\end{align}
for strictly positive constants $(c_1, c_2)$.  Such classes include
the RKHS generated by the Gaussian kernel $K(x, x') = \exp(-\!\ltwo{x
  - x'}^2)$.
\begin{corollary}
  \label{corollary:gaussian-kernel}
  For a kernel with exponential eigendecay~\eqref{EqnExpDecay}, consider
  the output of the \algname\ algorithm with $\lambda = 1/\totalobs$.
  Suppose that Assumption~\ref{assu:function-space} and
  Assumption~\ref{assumption:kernel}
  (or~\ref{assumption:bounded-kernel}) hold, and that the number of
  processors satisfy the bound
  \begin{equation*}
    \nummac \le c \frac{\totalobs^{\frac{\moment - 4}{\moment - 2}}}{
      \momentbound^{\frac{4 \moment}{\moment - 2}} \log^{\frac{2
          \moment - 1}{\moment - 2}} \totalobs} ~~
    \mbox{(Assumption~\ref{assumption:kernel})} ~~~~ \mbox{or} ~~~~ \nummac
    \le c \frac{\totalobs}{\momentbound^4 \log^2 \totalobs} ~~
    \mbox{(Assumption~\ref{assumption:bounded-kernel})},
  \end{equation*}
  where $c$ is a constant only depending on $c_2$.  Then the mean-squared
  error is bounded as
  \begin{align}
    \label{EqnExpBound}
    \E\left [\ltwo {\funcavg - \fopt}^2\right] & = \order
    \left(\sigma^2 \, \frac{\sqrt{\log \totalobs}}{\totalobs} \right).
  \end{align}
\end{corollary}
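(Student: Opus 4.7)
The plan is to specialize Theorem~\ref{theorem:error-bound} to the exponential eigendecay~\eqref{EqnExpDecay} with $\lambda = 1/\totalobs$ and an appropriate choice of the free integer $d$, and then invoke a known minimax lower bound to conclude optimality.

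First, I would estimate the effective dimension $\fulltracelambda$ at $\lambda = 1/\totalobs$. Set $j^\star = \lceil \sqrt{\log(c_1\totalobs)/c_2}\rceil$ so that $\eigenvalue_{j^\star} \asymp \lambda$. Splitting
\[
\fulltracelambda \;=\; \sum_{j\le j^\star}\frac{1}{1+\lambda/\eigenvalue_j} \;+\; \sum_{j>j^\star}\frac{\eigenvalue_j}{\eigenvalue_j+\lambda},
\]
the first sum is bounded termwise by $1$, contributing $O(\sqrt{\log\totalobs})$. For the second, use $\eigenvalue_j/(\eigenvalue_j+\lambda)\le \eigenvalue_j/\lambda \le c_1\totalobs\exp(-c_2 j^2)$ and a Gaussian-type tail comparison starting from $j^\star$ to see that the tail is $O(1)$. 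Hence $\fulltracelambda = O(\sqrt{\log\totalobs})$, and the variance term $12\sigma^2\fulltracelambda/\totalobs$ in~\eqref{EqnGoodUpper} already matches the target rate~\eqref{EqnExpBound}. The bias term $(8+12/\nummac)\lambda\hnorm{\fopt}^2 = O(1/\totalobs)$ is also absorbed.

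Next I would set $d = \lceil\sqrt{3\log\totalobs/c_2}\rceil$, which gives $\eigenvalue_{d+1}\le c_1/\totalobs^3$ and, by the same Gaussian tail bound, $\tailsum = O(1/\totalobs^3)$. Plugging this into the definitions of $T_1(d)$ and $T_2(d)$ together with $\lambda^{-1}=\totalobs$ and $\tr(K)<\infty$, both of these terms are easily seen to be $O(1/\totalobs^{2})$, hence of strictly lower order than $\sqrt{\log\totalobs}/\totalobs$.

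It remains to control $T_3(d)$. With $d = O(\sqrt{\log\totalobs})$ we have $\log d = O(\log\log\totalobs)$, so $\maxlog$ is controlled by either $\sqrt{k}$ or $k/\numobs^{1/2-1/k}$. Combining this with $\fulltracelambda = O(\sqrt{\log\totalobs})$ and $\numobs=\totalobs/\nummac$, a direct computation shows that the factor $(C\maxlog\,\momentbound^2\fulltracelambda/\sqrt{\numobs})^k$ multiplied by $(1+2\sigma^2/(\nummac\lambda))$ is $O(\sqrt{\log\totalobs}/\totalobs)$ precisely when the stated growth condition on $\nummac$ holds: under Assumption~\ref{assumption:kernel} the polynomial exponent $(k-4)/(k-2)$ and log exponent $(2k-1)/(k-2)$ come from solving $(\momentbound^4\log\totalobs\,\nummac/\totalobs)^{k/2}\cdot\sigma^2\totalobs/\nummac = O(\sqrt{\log\totalobs}/\totalobs)$ for $\nummac$; under Assumption~\ref{assumption:bounded-kernel} one may send $k\to\infty$, which relaxes the constraint to the simpler $\nummac\le c\,\totalobs/(\momentbound^4\log^2\totalobs)$. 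Combining the three pieces yields~\eqref{EqnExpBound}, and the matching lower bound $\Omega(\sqrt{\log\totalobs}/\totalobs)$ for RKHSs with exponential eigendecay follows from standard packing arguments in the spirit of Raskutti et al.~\cite{RasWaiYu12}, establishing minimax optimality.

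The main obstacle is the last step: the $\sqrt{\log\totalobs}$ inflation of $\fulltracelambda$ (compared to the finite-rank case of Corollary~\ref{corollary:low-rank-kernel}) means that, to keep $T_3$ below the target rate, one must carry the extra logarithmic factors through the moment-dependent exponents of $\nummac$. All other steps are routine substitutions once $\fulltracelambda$, $\eigenvalue_{d+1}$, and $\tailsum$ have been estimated for the exponential profile.
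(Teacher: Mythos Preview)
Your approach follows the paper's template exactly: specialize Theorem~\ref{theorem:error-bound}, estimate $\fulltracelambda$ by splitting the sum at $j^\star\asymp\sqrt{\log\totalobs}$, and then verify that $T_1,T_2,T_3$ are of lower order. The one substantive difference is the truncation level. The paper takes $d=\totalobs^{2}$, so that $\log d\asymp\log\totalobs$; you take $d\asymp\sqrt{\log\totalobs}$, so that $\log d\asymp\log\log\totalobs$.

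Your choice is actually sharper under Assumption~\ref{assumption:kernel}: with $\log d=O(\log\log\totalobs)$ the factor $\maxlog$ in $T_3$ is smaller than with the paper's choice, and solving your displayed balance equation $(\momentbound^4\log\totalobs\cdot\nummac/\totalobs)^{k/2}\cdot\totalobs/\nummac=O(\sqrt{\log\totalobs}/\totalobs)$ yields the log-exponent $(k-1)/(k-2)$, not the $(2k-1)/(k-2)$ you quote. The extra $k/(k-2)$ in the corollary's exponent comes precisely from the paper's $\log d\asymp\log\totalobs$ entering through $\maxlog$. This is not a gap --- your bound on $\nummac$ is looser than the corollary's, so the corollary follows \emph{a fortiori} --- but it is an inconsistency between your choice of $d$ and your stated exponent. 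Under Assumption~\ref{assumption:bounded-kernel}, one takes $k=\log\totalobs$, which dominates $\log d$ for either choice of $d$, so both routes give the same $\log^2\totalobs$ denominator. (A minor aside: your $T_2$ is $O(1/(\nummac\totalobs))$, not $O(1/\totalobs^2)$, since $2\sigma^2/\lambda=2\sigma^2\totalobs$; this is still of lower order.)
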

\noindent The upper bound~\eqref{EqnExpBound} is also minimax optimal
for the exponential kernel classes, which behave like a finite-rank
kernel with effective rank $\sqrt{\log \totalobs}$.

\paragraph{Summary:} Each corollary gives a critical threshold for the
number $\nummac$ of data partitions: as long as $\nummac$ is below
this threshold, we see that the decomposition-based
\algname\ algorithm gives the optimal rate of convergence.  It is
interesting to note that the number of splits may be quite large: each
grows asymptotically with $\totalobs$ whenever the basis functions
have more than four moments (viz.
Assumption~\ref{assumption:kernel}).  Moreover, the \algname\ method
can attain these optimal convergence rates while using substantially
less computation than standard kernel ridge regression methods.


\section{Proofs of Theorem~\ref{theorem:error-bound} and related results}
\label{sec:proof-results-and-corollaries}

We now turn to the proof of Theorem~\ref{theorem:error-bound} and
Corollaries~\ref{corollary:low-rank-kernel}
through~~\ref{corollary:gaussian-kernel}.  This section contains only a
high-level view of proof of Theorem~\ref{theorem:error-bound}; we defer more
technical aspects to the appendices.


\subsection{Proof of Theorem~\ref{theorem:error-bound}}
\label{sec:proof-error-bound}

Using the definition of the averaged estimate $\funcavg =
\frac{1}{\nummac} \sum_{i=1}^\nummac \fapprox_i$, a bit of algebra
yields
\begin{align*}
  \E[\ltwo{ \funcavg - \functrue}^2] & = \E[\ltwo{(\funcavg -
      \E[\funcavg]) + (\E[\funcavg] - \functrue)}^2] \\ & =
  \E[\ltwo{\funcavg - \E[\funcavg]}^2] + \ltwo{\E[\funcavg] -
    \functrue }^2 + 2\E[\langle \funcavg - \E[\funcavg], \E[\funcavg]
    - \functrue\rangle_\lsspace] \\ & =
  \E\bigg[\ltwobigg{\frac{1}{\nmachine}\sum_{i=1}^\nmachine
      (\funcapprox_i - \E[\funcapprox_i])}^2\bigg] +
  \ltwo{\E[\funcavg] - \functrue }^2,
\end{align*}
where we used the fact that $\E[\fapprox_i] = \E[\funcavg]$ for each
$i \in [\nummac]$.  Using this unbiasedness once more, we bound the
variance of the terms $\fapprox_i - \E[\funcavg]$ to see that
\begin{align}
  \E\left[\ltwo{\funcavg - \functrue}^2\right] & \le
  \frac{1}{\nmachine} \E\left[\ltwos{\funcapprox_1 -
      \E[\fapprox_1]}^2\right] + \ltwos{\E[\funcapprox_1] -
    \functrue}^2 \nonumber \\ & \le \frac{1}{\nmachine}
  \E\left[\ltwos{\funcapprox_1 - \fopt}^2\right] +
  \ltwos{\E[\funcapprox_1] - \functrue}^2,
  \label{eqn:error-inequality}
\end{align}
where we have used the fact that $\E[\fapprox_i]$ minimizes
$\E[\ltwos{\fapprox_i - f}^2]$ over $f \in \hilbertspace$.

The error
bound~\eqref{eqn:error-inequality} suggests our strategy: we upper
bound $\E[\|\funcapprox_1 - \functrue\|_2^2]$ and $\|
\E[\funcapprox_1] - \functrue \|_2^2$ respectively.
Based on
equation~\eqref{eqn:sub-krr}, the estimate $\funcapprox_1$ is obtained
from a standard kernel ridge regression with sample size $\numobs =
\overallsize/\nmachine$ and ridge parameter $\lambda$.  Accordingly,
the following two auxiliary results provide bounds on these two terms,
where the reader should recall the definitions of $\maxlog$ and
$\tailsum$ from equation~\eqref{eqn:define-shorthands}. In each lemma,
$C$ represents a universal (numerical) constant.


\begin{lemma}[Bias bound]
  \label{lemma:norm-expectation-moments}
  Under Assumptions~\ref{assumption:kernel}
  and~\ref{assu:function-space}, for each $d = 1, 2, \ldots$, we have
  \begin{align}
    \ltwos{\E[\fapprox] - \fopt}^2 & \le 8 \lambda \hnorm{\fopt}^2 +
    \frac{8\momentbound^4 \hnorm{\fopt}^2 \tr(K) \tailsum}{\lambda} +
    \left( C \maxlog \frac{\momentbound^2 \fulltracelambda}{\sqrt{\numobs}}
    \right)^k \ltwo{\fopt}^2.
  \end{align}
\end{lemma}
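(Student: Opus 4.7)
The plan is to decompose the bias into a deterministic regularization piece and a random-design sampling piece. Introduce the population-level ridge function $\tilde{f} \defeq \argmin_{f \in \hilbertspace} \{\E[(f(X) - Y)^2] + \lambda \hnorm{f}^2\}$ and write
\begin{equation*}
\E[\fapprox] - \fopt \; = \; \bigl(\E[\fapprox] - \tilde{f}\bigr) \; + \; \bigl(\tilde{f} - \fopt\bigr).
\end{equation*}
The second piece (the regularization bias) admits a direct spectral calculation: writing $\fopt = \sum_j \theta_j \basis_j$ gives $\tilde{f} = \sum_j \frac{\eigenvalue_j}{\eigenvalue_j + \lambda} \theta_j \basis_j$, whence
\begin{equation*}
\ltwo{\tilde{f} - \fopt}^2 \; = \; \sum_j \frac{\lambda^2 \theta_j^2}{(\eigenvalue_j + \lambda)^2} \; \le \; \lambda \sum_j \frac{\theta_j^2}{\eigenvalue_j} \; = \; \lambda \hnorm{\fopt}^2.
\end{equation*}
A triangle-inequality split of the form $\|a + b\|^2 \le 2\|a\|^2 + 2\|b\|^2$ (with a little slack) then reduces the task to bounding the sampling bias, with the numerical constant $8$ in the stated bound absorbing such factors.

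For the sampling bias, I pass to the operator formulation. Let $\hat{\Sigma} \defeq \numobs^{-1} \sum_{i=1}^\numobs K(\cdot, x_i) \otimes K(\cdot, x_i)$ be the empirical covariance operator on $\hilbertspace$ and let $\Sigma$ be its population counterpart (whose eigenvalues are the $\eigenvalue_j$). Conditioning on the design and using $\E[Y \mid X] = \fopt(X)$ yields $\E[\fapprox \mid \{x_i\}] = (\hat{\Sigma} + \lambda I)^{-1} \hat{\Sigma} \fopt$, while $\tilde{f} = (\Sigma + \lambda I)^{-1} \Sigma \fopt$. Applying the resolvent identity $(\hat\Sigma + \lambda I)^{-1} - (\Sigma + \lambda I)^{-1} = (\Sigma + \lambda I)^{-1}(\Sigma - \hat\Sigma)(\hat\Sigma + \lambda I)^{-1}$ gives
\begin{equation*}
\E[\fapprox] - \tilde{f} \; = \; \lambda \, \E\!\left[(\Sigma + \lambda I)^{-1}(\hat{\Sigma} - \Sigma)(\hat{\Sigma} + \lambda I)^{-1} \fopt\right],
\end{equation*}
so the problem reduces to controlling the $L^2$ norm of this random operator product.

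The next step splits the operator along the eigenbasis at level $d$: let $P_d$ be the orthogonal projection onto $\mathrm{span}\{\basis_j\}_{j \le d}$. The tail piece (components involving $I - P_d$) is bounded using the crude spectral estimate $\tr((I - P_d)\Sigma) = \tailsum$ combined with the fourth-moment control from Assumption~\ref{assumption:kernel}, producing the middle term $8\momentbound^4\hnorm{\fopt}^2\tr(K)\tailsum/\lambda$. For the finite-rank head, apply a Rosenthal-type $k$-th moment inequality to the i.i.d.\ sum $\hat{\Sigma} - \Sigma$: pre- and post-multiplication by $(\Sigma + \lambda I)^{-1/2}$ converts the trace-type normalization into $\fulltracelambda$, and a union bound over the top-$d$ eigenfunctions contributes a factor $\log d$ that is absorbed into $\maxlog$, yielding the concentration term $(C\maxlog\momentbound^2\fulltracelambda/\sqrt{\numobs})^k \ltwo{\fopt}^2$.

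The main obstacle lies in this last step: obtaining a sharp $L^2$ estimate of the random operator $(\Sigma + \lambda I)^{-1/2}(\hat{\Sigma} - \Sigma)(\Sigma + \lambda I)^{-1/2}$ under only the $k$-th moment hypothesis of Assumption~\ref{assumption:kernel}, rather than under uniform boundedness. The moment order $k$ directly controls the tail decay (and hence the exponent of the $(\cdot)^k$ factor), while the truncation parameter $d$ is kept free so that in the subsequent corollaries it can be optimized against the specific eigendecay to balance the tail and concentration contributions.
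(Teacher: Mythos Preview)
Your decomposition via the population ridge $\tilde f$ is a legitimate route, but it is not the paper's, and your sketch has a real gap in the sampling-bias step. The paper never introduces $\tilde f$: it works directly with the conditional error $\E[\esterr\mid X]$, obtains the identity $(\outprodmat+\lambda I)\E[\esterr\mid X]=-\lambda\fopt$, truncates the basis expansion at level $d$, and derives a finite $d\times d$ linear system for the head coefficients. The key device is then a \emph{good-event/bad-event split}: on the event $\event=\{\|Q^{-1}(\numobs^{-1}\basismat^T\basismat-I)Q^{-1}\|\le 1/2\}$ the finite system can be inverted with bounded norm, yielding the first two terms of the lemma; on $\event^c$ the paper uses the crude bound $\ltwo{\E[\esterr\mid X]}\le\ltwo{\fopt}$ and multiplies by $\P(\event^c)$. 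The $(\,C\maxlog\momentbound^2\fulltracelambda/\sqrt{\numobs}\,)^k\ltwo{\fopt}^2$ term is exactly this product---it is a tail-probability bound (Markov applied at order $k$ to a matrix moment inequality of Chen et al.), not a $k$th moment of a linear statistic as your ``Rosenthal-type'' phrasing suggests. The $\log d$ enters through the dimension dependence of that matrix concentration result, not through a union bound over eigenfunctions.

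The gap in your route is the random inverse $(\hat\Sigma+\lambda I)^{-1}$ in your resolvent expression. You cannot pass from that expression to a moment bound on $(\Sigma+\lambda I)^{-1/2}(\hat\Sigma-\Sigma)(\Sigma+\lambda I)^{-1/2}$ without first controlling $(\hat\Sigma+\lambda I)^{-1}$, and under only $k$th-moment assumptions on the $\basis_j$ that operator is not uniformly bounded. The standard fix is precisely the good-event argument above: write $(\hat\Sigma+\lambda I)^{-1}=(\Sigma+\lambda I)^{-1/2}[I+(\Sigma+\lambda I)^{-1/2}(\hat\Sigma-\Sigma)(\Sigma+\lambda I)^{-1/2}]^{-1}(\Sigma+\lambda I)^{-1/2}$, bound the bracket by $2$ on $\event$, and handle $\event^c$ separately with a crude estimate times $\P(\event^c)$. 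Once you add this step your approach converges to the paper's, just organized around $\tilde f$ rather than $\E[\esterr\mid X]$; the regularization-bias piece $\ltwo{\tilde f-\fopt}^2\le\lambda\hnorm{\fopt}^2$ then plays the role of the paper's bound $\|\lambda Q^{-1}\eigmat^{-1}\truncate{\theta}\|_2^2\le\lambda\hnorm{\fopt}^2$.
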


\begin{lemma}[Variance bound]
  \label{lemma:variance-bound}
  Under Assumptions~\ref{assumption:kernel}
  and~\ref{assu:function-space}, for each $d = 1, 2, \ldots$, we have
  \begin{multline}
    \E[\ltwos{\fapprox - \fopt}^2] \le 12 \lambda \hnorm{\fopt}^2 +
    \frac{12 \stddev^2\fulltracelambda}{\numobs} \\
    + \left(\frac{2
      \stddev^2}{\lambda} + 4 \hnorm{\fopt}^2\right)
    \left(\eigenvalue_{d+1} + \frac{12 \momentbound^4 \tr(K)
      \tailsum}{\lambda} + \left(C \maxlog \frac{\momentbound^2
      \fulltracelambda}{\sqrt{\numobs}}\right)^k \ltwo{\fopt}^2\right).
  \end{multline}
\end{lemma}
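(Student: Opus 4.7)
The strategy is to analyze a single local KRR estimate $\fapprox$ based on $\numobs$ i.i.d.\ samples and bound its $L^2$-risk $\E[\ltwos{\fapprox-\fopt}^2]$ directly, following the same operator-theoretic template as in the bias bound (Lemma~\ref{lemma:norm-expectation-moments}) but now tracking noise fluctuations. Introduce the empirical covariance operator $\widehat{\Sigma} = \frac{1}{\numobs}\sum_{i=1}^\numobs K(\cdot,x_i)\otimes K(\cdot,x_i)$ on $\hilbertspace$, its population counterpart $\Sigma = \E[K(\cdot,X)\otimes K(\cdot,X)]$, and the noise element $\widehat{\xi} = \frac{1}{\numobs}\sum_{i=1}^\numobs(y_i-\fopt(x_i))K(\cdot,x_i)$. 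The KRR optimality condition reads $(\widehat{\Sigma}+\lambda I)\fapprox = \widehat{\Sigma}\fopt + \widehat{\xi}$, so
\begin{equation*}
\fapprox - \fopt \;=\; -\lambda(\widehat{\Sigma}+\lambda I)^{-1}\fopt \;+\; (\widehat{\Sigma}+\lambda I)^{-1}\widehat{\xi}.
\end{equation*}
Using the identity $\ltwo{g}^2 = \langle g,\Sigma g\rangle_\hilbertspace$ for $g\in\hilbertspace$, together with $(a+b)^2\le 2a^2+2b^2$, turns $\ltwos{\fapprox-\fopt}^2$ into two quadratic forms of the shape $\langle v,(\widehat{\Sigma}+\lambda I)^{-1}\Sigma(\widehat{\Sigma}+\lambda I)^{-1}v\rangle_\hilbertspace$: one with $v=\lambda\fopt$ (the bias-like piece) and one with $v=\widehat{\xi}$ (the noise-like piece).

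The next step is to introduce a high-probability ``self-regularization'' event $\mathcal{E}$ on which $\Sigma \preceq c(\widehat{\Sigma}+\lambda I)$, so that on $\mathcal{E}$ one may replace $\widehat{\Sigma}$ by $\Sigma$ up to constants inside the above quadratic forms. The bias-like piece then collapses to $\lambda^2\langle\fopt,(\Sigma+\lambda I)^{-1}\fopt\rangle_\hilbertspace\le\lambda\hnorm{\fopt}^2$, producing the leading $12\lambda\hnorm{\fopt}^2$ summand after tracking constants. For the noise-like piece, $\E[\widehat{\xi}\otimes\widehat{\xi}]\preceq(\stddev^2/\numobs)\Sigma$ together with the self-regularization gives expected value at most $(\stddev^2/\numobs)\,\tr\bigl(\Sigma(\Sigma+\lambda I)^{-1}\bigr) = \stddev^2\fulltracelambda/\numobs$, i.e.\ the second leading summand. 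Splitting the trace at eigenindex $d$ (the first $d$ coordinates vs.\ the tail) and applying Assumption~\ref{assumption:kernel} to bound the tail contribution of $\E[\basis_j(x)^2]$ in place of a uniform-boundedness argument produces the $\eigenvalue_{d+1}$ term and the $\momentbound^4\tr(K)\tailsum/\lambda$ term inside the outer parentheses.

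The bad-event piece needs the prefactor $(2\stddev^2/\lambda + 4\hnorm{\fopt}^2)$. This comes from a crude \emph{a priori} bound: evaluating the KRR objective at $f=0$ gives $\hnorm{\fapprox}^2 \le \frac{1}{\lambda\numobs}\sum_i y_i^2$, so after adding and subtracting $\fopt$ and taking expectation one gets $\E[\hnorm{\fapprox-\fopt}^2]\lesssim \hnorm{\fopt}^2+\stddev^2/\lambda$, which in turn controls $\ltwos{\fapprox-\fopt}^2\le \tr(K)\hnorm{\fapprox-\fopt}^2$ or the analogous spectral bound. Multiplying this deterministic prefactor by the tail probability of $\mathcal{E}^c$ yields the final $(C\maxlog\momentbound^2\fulltracelambda/\sqrt{\numobs})^k\ltwo{\fopt}^2$ contribution. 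Collecting the three pieces and tracking constants gives the stated inequality.

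The main obstacle is establishing the self-regularization event $\mathcal{E}$ with tail probability of order $(C\maxlog\momentbound^2\fulltracelambda/\sqrt{\numobs})^k$ under only the $2k$-th moment condition of Assumption~\ref{assumption:kernel}. Without uniform boundedness on the $\basis_j$, matrix Bernstein does not apply directly, and one must instead run a moment version of operator concentration for $(\Sigma+\lambda I)^{-1/2}(\widehat{\Sigma}-\Sigma)(\Sigma+\lambda I)^{-1/2}$; this is precisely what produces the $\maxlog$ factor (via the $\max\{\moment,\log d\}$ truncation) and the $k$-th power decay in the residual term. Since exactly the same concentration is what underlies Lemma~\ref{lemma:norm-expectation-moments}, I would import it from the appendix and focus the proof of Lemma~\ref{lemma:variance-bound} on the two new ingredients: the noise-trace computation $\E[\langle\widehat{\xi},(\widehat{\Sigma}+\lambda I)^{-1}\widehat{\xi}\rangle]$ giving $\stddev^2\fulltracelambda/\numobs$, and the \emph{a priori} bound on $\hnorm{\fapprox-\fopt}^2$ that generates the $(2\stddev^2/\lambda+4\hnorm{\fopt}^2)$ multiplicative factor outside.
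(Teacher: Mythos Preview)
Your outline matches the paper's proof in all the essential ingredients: the first-order optimality condition for $\fapprox$, the decomposition into a good event where the empirical covariance concentrates and a bad event handled by the crude \emph{a priori} bound $\E[\hnorm{\fapprox-\fopt}^2\mid X]\le 2\stddev^2/\lambda+4\hnorm{\fopt}^2$, and the noise-trace computation giving $\stddev^2\fulltracelambda/\numobs$. The concentration input is indeed imported verbatim from the bias lemma.

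There is one misattribution you should fix. The $\eigenvalue_{d+1}$ and $\momentbound^4\tr(K)\tailsum/\lambda$ terms do \emph{not} come from ``splitting the trace'' in the noise piece; the noise contribution is already the full $\stddev^2\fulltracelambda/\numobs$. Rather, the truncation at $d$ is applied to the \emph{error} $\esterr=\fapprox-\fopt$ itself, before any concentration argument, because the matrix moment inequality needs a $d\times d$ matrix to produce the $\log d$ factor in $\maxlog$. Writing $\esterr=\truncate{\esterr}+\detruncate{\esterr}$, the tail $\ltwos{\detruncate{\esterr}}^2\le\eigenvalue_{d+1}\hnorm{\esterr}^2$ is bounded directly by the \emph{a priori} bound, which is where the $\eigenvalue_{d+1}$ term picks up the prefactor $(2\stddev^2/\lambda+4\hnorm{\fopt}^2)$. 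The truncated coordinates $\truncate{\coorderr}$ then satisfy a $d$-dimensional linear system with three forcing terms: the regularization term $\lambda\eigmat^{-1}\truncate{\theta}$, the noise $\basismat^T\noise/\numobs$, \emph{and} a cross term $\basismat^T v/\numobs$ where $v_i=\detruncate{\esterr}(x_i)$ is the tail of the error evaluated at the design points. Bounding this cross term again invokes the \emph{a priori} bound on $\hnorm{\esterr}$ and yields the $\tailsum$ contribution, also with the $(2\stddev^2/\lambda+4\hnorm{\fopt}^2)$ prefactor. This is why that prefactor multiplies all three bracketed terms, not only the bad-event piece. Your purely operator-theoretic event $\Sigma\preceq c(\widehat{\Sigma}+\lambda I)$ in infinite dimensions would not directly give a $\log d$-type tail; you must truncate first and then carry the resulting tail contamination $v$ through the finite-dimensional equation.
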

\noindent The proofs of these lemmas, contained in
Appendices~\ref{sec:proof-bias} and~\ref{sec:proof-variance}
respectively, constitute one main technical contribution of this
paper. \\

Given these two lemmas, the remainder of the theorem proof is
straightforward.  Combining the
inequality~\eqref{eqn:error-inequality} with
Lemmas~\ref{lemma:norm-expectation-moments}
and~\ref{lemma:variance-bound} yields the claim of
Theorem~\ref{theorem:error-bound}.


\subsection{Proof of Corollary~\ref{corollary:low-rank-kernel}}

We first present a general inequality bounding the size of $\nummac$
for which optimal convergence rates are possible. We assume that $d$
is chosen large enough that for some constant $c$, we have $c\log(2d)
\ge \moment$ in Theorem~\ref{theorem:error-bound}, and that the
regularization $\lambda$ has been chosen. In this case, inspection of
Theorem~\ref{theorem:error-bound} shows that if $\nummac$ is small
enough that
\begin{equation*}
  \left(\sqrt{\frac{\log d}{\totalobs / \nummac}} \momentbound^2
  \fulltracelambda
  \right)^\moment \frac{1}{\nummac \lambda} \le
  \frac{\fulltracelambda}{\totalobs},
\end{equation*}
then the term $\Term_3(d)$ provides a convergence rate given by
$\fulltracelambda / \totalobs$. Thus, solving the expression above for
$\nummac$, we find
\begin{equation*}
  \frac{\nummac \log d}{\totalobs}\momentbound^4 \fulltracelambda^2
  = \frac{\lambda^{2/\moment} \nummac^{2/\moment}
    \fulltracelambda^{2/\moment}}{\totalobs^{2/\moment}} ~~~
  \mbox{or} ~~~ \nummac^{\frac{\moment - 2}{\moment}} =
  \frac{\lambda^{\frac{2}{\moment}} \totalobs^{\frac{\moment -
        2}{\moment}}}{\fulltracelambda^{2\frac{\moment -
        1}{\moment}} \momentbound^4 \log d}.
\end{equation*}
Taking $(\moment - 2) / \moment$-th roots of both sides, we obtain
that if
\begin{equation}
  \label{eqn:general-num-machines}
  \nummac \le \frac{\lambda^{\frac{2}{\moment - 2}} \totalobs}{
    \fulltracelambda^{2\frac{\moment - 1}{\moment - 2}}
    \momentbound^{\frac{4\moment}{\moment - 2}}
    \log^{\frac{\moment}{\moment - 2}} d},
\end{equation}
then the term $\Term_3(d)$ of the bound~\eqref{EqnGoodUpper}
is $\order(\fulltracelambda/ \totalobs)$.

Now we apply the bound~\eqref{eqn:general-num-machines} in the case in the
corollary. Let us take $d = \kerrank$; then $\tailsum=\eigenvalue_{d + 1}=0$.
We find that $\fulltracelambda \le \kerrank$ since each of its terms is
bounded by 1, and we take $\lambda = \kerrank / \totalobs$. Evaluating the
expression~\eqref{eqn:general-num-machines} with this value, we arrive at
\begin{equation*}
  \nummac \le \frac{\totalobs^{\frac{\moment - 4}{\moment - 2}}}{
    \kerrank^2 \momentbound^{\frac{4 \moment}{\moment - 2}}
    \log^{\frac{\moment}{\moment - 2}} \kerrank}.
\end{equation*}
If we have sufficiently many moments that $\moment \ge \log
\totalobs$, and $\totalobs \ge \kerrank$ (for example, if the basis
functions $\basis_j$ have a uniform bound $\momentbound$), then we
may take $\moment = \log \totalobs$, which implies that
$\totalobs^{\frac{\moment - 4}{\moment - 2}} = \Omega(\totalobs)$,
and we replace $\log d = \log \kerrank$ with $\log \totalobs$ (we assume
$\totalobs \ge \kerrank$), by recalling
Theorem~\ref{theorem:error-bound}. Then so long as
\begin{equation*}
  \nummac \le c \frac{\totalobs}{\kerrank^2 \momentbound^4 \log \totalobs}
\end{equation*}
for some constant $c > 0$, we obtain an identical result.


\subsection{Proof of Corollary~\ref{corollary:sobolev-kernel}}

We follow the program outlined in our remarks following
Theorem~\ref{theorem:error-bound}.  We must first choose $\lambda$
so that $\lambda = \fulltracelambda / \totalobs$. To that end, we
note that setting $\lambda = \totalobs^{-\frac{2 \smoothness}{2
    \smoothness + 1}}$ gives
\begin{align*}
  \fulltracelambda
  = \sum_{j = 1}^\infty \frac{1}{1
    + j^{2 \smoothness} \totalobs^{-\frac{2 \smoothness}{2 \smoothness + 1}}}
  & \le
  \totalobs^{\frac{1}{2 \smoothness + 1}}
  + \sum_{j > \totalobs^{\frac{1}{2 \smoothness + 1}}}
  \frac{1}{1 + j^{2\smoothness}
    \totalobs^{-\frac{2 \smoothness}{2 \smoothness + 1}}} \\
  & \le \totalobs^{\frac{1}{2 \smoothness + 1}}
  + \totalobs^{\frac{2 \smoothness}{2 \smoothness + 1}}
  \int_{\totalobs^{\frac{1}{2 \smoothness + 1}}}
  \frac{1}{u^{2 \smoothness}} du
  = \totalobs^{\frac{1}{2 \smoothness + 1}}
  + \frac{1}{2 \smoothness - 1}
  \totalobs^{\frac{1}{2 \smoothness + 1}}.
\end{align*}
Dividing by $\totalobs$, we find that $\lambda \approx \fulltracelambda /
\totalobs$, as desired.  Now we choose the truncation parameter $d$. By
choosing $d = \totalobs^t$ for some $t \in \R_+$, then we find that
$\eigenvalue_{d + 1} \lesssim \totalobs^{-2 \smoothness t}$ and an
integration yields $\tailsum \lesssim \totalobs^{-(2 \smoothness - 1) t}$.
Setting $t = 3 / (2 \smoothness - 1)$ guarantees that
$\eigenvalue_{d + 1} \lesssim \totalobs^{-3}$ and
$\tailsum \lesssim \totalobs^{-3}$; the corresponding terms in the
bound~\eqref{EqnGoodUpper} are thus negligible. Moreover,
we have for any finite $\moment$ that $\log d \gtrsim k$.

Applying the general bound~\eqref{eqn:general-num-machines} on
$\nummac$, we arrive at the inequality
\begin{equation*}
  \nummac \le c \frac{\totalobs^{-\frac{4 \smoothness}{
        (2 \smoothness + 1)(\moment - 2)}} \totalobs}{
    \totalobs^{\frac{2(\moment - 1)}{(2 \smoothness + 1)(\moment - 2)}}
    \momentbound^{\frac{4 \moment}{\moment - 2}} \log^{\frac{\moment}{
        \moment - 2}} \totalobs}
  = c \frac{\totalobs^{\frac{2(\moment - 4)\smoothness - \moment}{
        (2 \smoothness + 1)(\moment - 2)}}}{
    \momentbound^{\frac{4 \moment}{\moment - 2}}
    \log^{\frac{\moment}{\moment - 2}} \totalobs}.
\end{equation*}
Whenever this holds, we have convergence rate $\lambda =
\totalobs^{-\frac{2 \smoothness}{2 \smoothness + 1}}$. Now, let
Assumption~\ref{assumption:bounded-kernel} hold, and take $k = \log
\totalobs$. Then the above bound becomes (to a multiplicative constant
factor) $\totalobs^{\frac{2 \smoothness - 1}{2 \smoothness + 1}} /
\momentbound^4 \log \totalobs$, as claimed.

\subsection{Proof of Corollary~\ref{corollary:gaussian-kernel}}

First, we set $\lambda = 1 / \totalobs$. Considering the sum
$\fulltracelambda = \sum_{j=1}^\infty \eigenvalue_j / (\eigenvalue_j +
\lambda)$, we see that for $j \le \sqrt{(\log \totalobs)/c_2}$, the
elements of the sum are bounded by $1$. For $j > \sqrt{(\log
  \totalobs)/c_2}$, we make the approximation
\begin{equation*}
  \sum_{j \ge \sqrt{(\log \totalobs)/c_2}}
  \frac{\eigenvalue_j}{\eigenvalue_j + \lambda} \le
  \frac{1}{\lambda} \sum_{j \ge \sqrt{(\log \totalobs)/c_2}} \eigenvalue_j
  \lesssim \totalobs \int_{\sqrt{(\log \totalobs)/c_2}}^\infty
  \exp(-c_2 t^2) dt = \order(1).
\end{equation*}
Thus we find that $\fulltracelambda + 1\le c \sqrt{\log \totalobs}$ for
some constant $c$. By choosing $d = \totalobs^2$, we have that the
tail sum and $(d + 1)$-th eigenvalue both satisfy $\eigenvalue_{d + 1}
\le \tailsum \lesssim c_2^{-1} \totalobs^{-4}$.  As a consequence, all
the terms involving $\tailsum$ or $\eigenvalue_{d + 1}$ in the
bound~\eqref{EqnGoodUpper} are negligible.

Recalling our inequality~\eqref{eqn:general-num-machines}, we thus
find that (under Assumption~\ref{assumption:kernel}), as long as the
number of partitions $\nummac$ satisfies
\begin{equation*}
  \nummac \le c\frac{\totalobs^{\frac{\moment - 4}{\moment - 2}}}{
    \momentbound^{\frac{4 \moment}{\moment - 2}}
    \log^{\frac{2\moment - 1}{\moment - 2}} \totalobs},
\end{equation*}
the convergence rate of $\funcavg$ to $\fopt$ is given by
$\fulltracelambda / \totalobs \simeq \sqrt{\log \totalobs} / \totalobs$.
Under the boundedness assumption~\ref{assumption:bounded-kernel}, as
we did in the proof of Corollary~\ref{corollary:low-rank-kernel}, we
take $\moment = \log \totalobs$ in
Theorem~\ref{theorem:error-bound}.  By inspection, this yields the
second statement of the corollary.



\section{Proof of Theorem~\ref{theorem:oracle} and related results}
\label{sec:proof-oracle}

In this section, we provide the proofs of
Theorem~\ref{theorem:oracle}, as well as the
bound~\eqref{EqnDefnResidualAlt} based on the alternative form of the
residual error.  As in the previous section, we present a high-level
proof, deferring more technical arguments to the appendices.

\subsection{Proof of Theorem~\ref{theorem:oracle}}

We begin by stating and proving two auxiliary claims:
\begin{subequations}
\begin{align}
\label{eqn:oracle-equivalence}
\E \left[(Y - f(X))^2\right] & = \E\left[(Y - \fopt(X))^2\right] +
\ltwo{f-\fopt}^2 ~~ \mbox{for~any~} f \in \lsspace, ~~~~\mbox{and} \\
\label{eqn:oracle-minimizer}
\foptimalreg & = \argmin_{\hnorm{f} \leq \radius} \ltwo{f-\fopt}^2.
\end{align}
\end{subequations}
Let us begin by proving equality~\eqref{eqn:oracle-equivalence}.
By adding and subtracting terms, we have
\begin{align*}
\E\left[(Y-\fopt(X))^2\right] &= \E\left[(Y-\fopt(X))^2\right] +
\ltwo{f-\fopt}^2 + 2\E[(f(X)-\fopt(X))\E[(Y-\fopt(X)) \mid X = x]] \\
& \stackrel{(i)}{=} \E\left[(Y-\fopt(X))^2\right] + \ltwo{f-\fopt}^2,
\end{align*}
where equality (i) follows since the random variable $Y - \fopt(X)$ is
mean-zero given $X = x$.

For the second equality~\eqref{eqn:oracle-minimizer}, consider any
function $f$ in the RKHS that satisfies $\hnorm{f} \leq \radius$.  The
definition of the minimizer $\foptimalreg$ guarantees that
\begin{align*}
\E \left[(\foptimalreg(X) - Y)^2\right] + \lambdabase \radius^2 & \leq
\E[(f(X) - Y)^2] + \lambdabase\hnorm{f}^2 \leq \E[(f(X) - Y)^2] +
\lambdabase \radius^2.
\end{align*}
This result combined with equation~\eqref{eqn:oracle-equivalence} establishes
the equality~\eqref{eqn:oracle-minimizer}. \\

We now turn to the proof of the theorem.  Applying H\"older's
inequality yields that
\begin{align}
  \E\left[\ltwo{\funcavg-\fopt}^2\right] & \leq \left(1 +
  \frac{1}{q}\right) \ltwo{\foptimalreg-\fopt}^2 + (1 + q)
  \ltwo{\funcavg - \foptimalreg}^2\nonumber\\ & = \left(1 +
  \frac{1}{q}\right) \inf_{\hnorm{f} \le \radius} \ltwo{f-\fopt}^2 +
  (1 + q) \ltwo{\funcavg - \foptimalreg}^2 \qquad \mbox{for all $q >
    0$,}
  \label{eqn:oracle-bound-separation}
\end{align}
where the second step follows from
equality~\eqref{eqn:oracle-minimizer}.  It thus suffices to upper
bound $\ltwo{\funcavg - \foptimalreg}^2$, and following the deduction
of inequality~\eqref{eqn:error-inequality}, we immediately obtain the
decomposition formula
\begin{align}
  \label{eqn:oracle-error-inequality}
  \E\left[\ltwo{\funcavg - \foptimalreg}^2\right] \leq
  \frac{1}{\nmachine} \E[\ltwos{\funcapprox_1 - \foptimalreg}^2] +
  \ltwos{ \E[\funcapprox_1] - \foptimalreg }^2,
\end{align}
where $\fapprox_1$ denotes the empirical minimizer for \emph{one}
of the subsampled datasets (i.e.\ the standard KRR solution on a sample
of size $\numobs = \totalnumobs / \nmachine$ with regularization
$\lambda$).
This suggests our strategy, which parallels our proof of
Theorem~\ref{theorem:error-bound}: we upper bound $\E[\ltwos{\funcapprox_1 -
  \foptimalreg}^2]$ and $\ltwos{\E[\funcapprox_1] - \foptimalreg}^2$,
respectively. In the rest of the proof, we let $\fapprox = \fapprox_1$
denote this solution.

Let the estimation error for a subsample be $\esterr = \funcapprox -
\foptimalreg$. Under Assumptions~\ref{assumption:kernel}
and~\ref{assumption:response-moment-bound}, we have the following two lemmas
bounding expression~\eqref{eqn:oracle-error-inequality}, which parallel
Lemmas~\ref{lemma:norm-expectation-moments} and~\ref{lemma:variance-bound} in
the case when $\fopt \in \HilbertSpace$. In each lemma, $C$ denotes a universal
constant.

\begin{lemma}\label{lemma:oracle-variance-bound}
  For all $d = 1, 2, \ldots$, we have
  \begin{multline}\label{eqn:oracle-variance-bound-expression}
    \E\left[\ltwo{\esterr}^2\right] \leq
    \frac{16(\lambdabase-\lambda)^2R^2}{\lambda} + \frac{8 \fulltracelambda
      \momentbound^2\altvarreg^2}{\numobs} \\ +
    \sqrt{32\radius^4+8\altvarreg^4/{\lambda^2}}
    \left(\eigenvalue_{d+1} +
    \frac{16\momentbound^4\tr(K)\tailsum}{\lambda} + \left(C \maxlog
    \frac{\momentbound^2 \fulltracelambda}{\sqrt{\numobs}}
    \right)^\moment\right).
  \end{multline}
\end{lemma}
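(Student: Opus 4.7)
The plan is to mirror the proof of Lemma~\ref{lemma:variance-bound}, with $\foptimalreg$ playing the role previously held by $\fopt$ and $\altvarreg^2$ replacing $\stddev^2$, but with careful attention paid to the discrepancy between the regularization parameter $\lambda$ used by the algorithm and the parameter $\lambdabase$ defining the population target $\foptimalreg$. Since $\foptimalreg \in \HilbertSpace$ with $\hnorm{\foptimalreg} \le \radius$, I can treat it as an ``interior'' comparator in the RKHS and avoid the difficulties that arise when $\fopt \notin \HilbertSpace$. The noise variable $Y - \foptimalreg(X)$ has conditional second moment $\varreg^2(X)$ satisfying $\E[\varreg^4(X)] = \altvarreg^4$ by Assumption~\ref{assumption:response-moment-bound}, which will play the role of the noise variance in the earlier variance bound.

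First I would set up the operator-theoretic representation: writing $\empop$ for the empirical covariance and $\popop$ for its population counterpart, the KRR estimator $\fapprox$ satisfies $(\empop + \lambda I)\fapprox = \frac{1}{\numobs}\sum_i y_i K(\cdot, x_i)$, while $\foptimalreg$ satisfies $(\popop + \lambdabase I)\foptimalreg = \E[Y K(\cdot, X)]$. Subtracting, I would decompose
\begin{equation*}
\fapprox - \foptimalreg = (\empop + \lambda I)^{-1}\Bigl[\tfrac{1}{\numobs}\sum_i(y_i-\foptimalreg(x_i))K(\cdot,x_i)\Bigr] + (\empop + \lambda I)^{-1}(\popop - \empop)\foptimalreg + (\lambdabase - \lambda)(\empop + \lambda I)^{-1}\foptimalreg.
\end{equation*}
The first term is the centered noise contribution; the second is the empirical-to-population fluctuation of the covariance; the third is the regularization-mismatch term and is the source of the $(\lambdabase-\lambda)^2\radius^2/\lambda$ piece (using $\|(\empop+\lambda I)^{-1}\| \le 1/\lambda$ and $\hnorm{\foptimalreg} \le \radius$).

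Next I would bound the three pieces in $L^2(\P)$. For the noise term, I would take expectation and exploit orthogonality across samples to obtain $\altvarreg^2 \fulltracelambda / \numobs$ up to constants, mirroring the effective-dimension calculation in the proof of Lemma~\ref{lemma:variance-bound}: the key identity is $\E\big[\ltwo{(\popop+\lambda I)^{-1}K(\cdot,x)\,\varreg(x)}^2\big] \lesssim \altvarreg^2 \fulltracelambda$, and then one must replace $(\popop+\lambda I)^{-1}$ by $(\empop+\lambda I)^{-1}$ at the cost of a concentration penalty. For the covariance-fluctuation term, I would truncate at level $d$: on the top-$d$ eigenspace apply Bernstein/matrix-concentration with the $2k$-moment hypothesis on $\basis_j$ to get the $\maxlog \momentbound^2 \fulltracelambda/\sqrt{\numobs}$ factor raised to the $k$-th power, while on the tail eigenspace use the trace bound $\momentbound^4 \tr(K)\tailsum/\lambda$ together with $\hnorm{\foptimalreg}\le\radius$. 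Cauchy--Schwarz against $\radius^2$ and $\altvarreg^2/\lambda$ (the former from $\hnorm{\foptimalreg}$, the latter coming from dividing noise energy by $\lambda$ when the regularization inverse is applied) produces the combined factor $\sqrt{32\radius^4 + 8\altvarreg^4/\lambda^2}$.

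The main obstacle will be the joint handling of the regularization-mismatch term and the tail concentration: the estimator is regularized at level $\lambda$, but the comparator $\foptimalreg$ is tied to $\lambdabase$, so the bias decomposition must be done in two stages (first to the population $\lambda$-regularized minimizer, then to $\foptimalreg$) without inflating constants beyond what the statement allows. I would also need to verify that the moment condition $\moment \ge 2$ from Assumption~\ref{assumption:kernel}, combined with $\altvarreg^4 < \infty$ from Assumption~\ref{assumption:response-moment-bound}, suffices for the fourth-moment bounds arising in the Cauchy--Schwarz step; this is where the $\altvarreg^4/\lambda^2$ contribution inside the square root originates. Once these pieces are in place, collecting terms gives exactly the bound~\eqref{eqn:oracle-variance-bound-expression}.
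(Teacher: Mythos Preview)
Your high-level plan---mirror Lemma~\ref{lemma:variance-bound} with $\foptimalreg$ in place of $\fopt$ and track the regularization mismatch---matches the paper. But there is one genuine gap.

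The residual $\noise'_i \defeq y_i - \foptimalreg(x_i)$ is \emph{not} conditionally mean-zero, since $\E[Y\mid X=x]=\fopt(x)\neq\foptimalreg(x)$ in general; so your first term is not ``centered noise'' and you cannot directly exploit independence across samples to obtain the $\altvarreg^2\fulltracelambda/\numobs$ bound. (Relatedly, your three terms as written do not sum to $\fapprox-\foptimalreg$: subtracting the two first-order conditions yields $\tfrac{1}{\numobs}\sum_i y_i K(\cdot,x_i)-\E[YK(\cdot,X)]$ in the first slot, not $\tfrac{1}{\numobs}\sum_i \noise'_i K(\cdot,x_i)$.) What the population optimality condition for $\foptimalreg$ \emph{does} give is $\E[\noise' \eval_X]=\lambdabase\foptimalreg$, i.e.\ in coordinates $\E[\basis_j(X)\noise']=\lambdabase\theta_j/\eigenvalue_j$. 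The paper exploits exactly this: after reaching the analogue of equation~\eqref{eqn:inverted-truncated-gradient-noise}, it splits $\lambda Q^{-1}\eigmat^{-1}\truncate{\theta}$ into a $(\lambda-\lambdabase)$-piece (producing the $16(\lambdabase-\lambda)^2\radius^2/\lambda$ term) and a $\lambdabase$-piece, and \emph{couples the latter with} $\tfrac{1}{\numobs}Q^{-1}\basismat^T\noise'$. By the identity above, that combination is a centered average (Lemma~\ref{LemFinal}), and its variance is bounded by $\fulltracelambda\momentbound^2\altvarreg^2/\numobs$ via Cauchy--Schwarz against $\E[\varreg^4(X)]=\altvarreg^4$. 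Treating the noise and the $\lambdabase$-bias separately, as your decomposition does, would not isolate the $\altvarreg^2$ dependence.

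Two smaller corrections to your narrative. First, the paper truncates $\esterr$ (not $\foptimalreg$) at level $d$; the cross term $\tfrac{1}{\numobs}Q^{-1}\basismat^T v$ involves the random tail $\detruncate{\esterr}$, and bounding it requires a fourth-moment version of Lemma~\ref{lemma:new-pre-bound} (this is why $\moment\ge 4$ is needed here). Second, the factor $\sqrt{32\radius^4+8\altvarreg^4/\lambda^2}$ does not arise from a Cauchy--Schwarz step against $\radius^2$ and $\altvarreg^2/\lambda$; it is simply the crude bound $\diffsquare$ on $\E[\hnorm{\esterr}^2]$ from Lemma~\ref{lemma:new-pre-bound}, which multiplies the tail term $\eigenvalue_{d+1}$, the cross term $\momentbound^4\tr(K)\tailsum/\lambda$, and the bad-event probability $\P(\event^c)$ exactly as in the earlier proof.
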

Denoting the right hand side of inequality~\eqref{eqn:oracle-variance-bound-expression} 
by $D^2$, we have 

\begin{lemma}
  \label{lemma:oracle-norm-expectation-bound}
  For all $d = 1, 2, \ldots$, we have
  \begin{multline}
    \ltwo{\E[\esterr]}^2 \leq \frac{4 (\lambdabase-\lambda)^2
      \radius^2}{\lambda} + \frac{C\log^2(d) (\momentbound^2
      \fulltracelambda)^2}{\numobs} D^2 \\
    + \sqrt{32\radius^4 + 8 \altvarreg^4/{\lambda^2}}
    \left(\eigenvalue_{d+1} + \frac{4\momentbound^4\tr(K) \beta_d}{\lambda}
    \right).
  \end{multline}
\end{lemma}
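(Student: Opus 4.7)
The plan is to mirror the structure of Lemma~\ref{lemma:norm-expectation-moments}, with two new wrinkles: (i) the sub-sample estimator $\fapprox$ is now compared to $\foptimalreg$, the minimizer of the \emph{$\lambdabase$-regularized} population risk, so the residuals $\xi_i := y_i - \foptimalreg(x_i)$ are not conditionally mean-zero; and (ii) the sub-sample regularization $\lambda$ generally differs from $\lambdabase$, producing an additional deterministic bias.

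I would introduce the population and empirical covariance operators $\Sigma = \E[K(\cdot,X) \otimes K(\cdot,X)]$ and $\hat\Sigma = \frac{1}{\numobs}\sum_{i=1}^\numobs K(\cdot,x_i) \otimes K(\cdot,x_i)$ on $\HilbertSpace$. The first-order condition for $\fapprox$ can then be written as
\[
(\hat\Sigma + \lambda I)\,\esterr \;=\; \frac{1}{\numobs}\sum_{i=1}^\numobs \xi_i\,K(\cdot,x_i) \;-\; \lambda\,\foptimalreg,
\]
while the optimality condition for $\foptimalreg$ gives $\E[\xi\,K(\cdot,X)] = \lambdabase\,\foptimalreg$. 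Applying the resolvent identity $(\hat\Sigma + \lambda I)^{-1} = (\Sigma + \lambda I)^{-1} + (\Sigma + \lambda I)^{-1}(\Sigma - \hat\Sigma)(\hat\Sigma + \lambda I)^{-1}$ and taking expectations, the ``linearized'' piece collapses to $(\lambdabase - \lambda)(\Sigma + \lambda I)^{-1}\foptimalreg$, whose $L^2$-norm is at most $(\lambdabase - \lambda)\radius/(2\sqrt{\lambda})$ by the standard spectral estimate $\|\Sigma^{1/2}(\Sigma + \lambda I)^{-1}\|_{\mathrm{op}} \leq 1/(2\sqrt{\lambda})$. This accounts for the first term $4(\lambdabase - \lambda)^2 \radius^2/\lambda$.

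The main obstacle is the remaining second-order term $\E\bigl[(\Sigma + \lambda I)^{-1}(\Sigma - \hat\Sigma)\esterr\bigr]$. Following the truncation strategy of Lemma~\ref{lemma:norm-expectation-moments}, I would split $\Sigma - \hat\Sigma$ along the Mercer eigenbasis into a head (indices $1,\dots,d$) and a tail. On the head, a matrix Bernstein / higher-moment inequality under Assumption~\ref{assumption:kernel} controls the whitened fluctuation $\|(\Sigma+\lambda I)^{-1/2}(\Sigma - \hat\Sigma)(\Sigma+\lambda I)^{-1/2}\|_{\mathrm{op}}$ at scale $\log(d)\,\momentbound^2\fulltracelambda/\sqrt{\numobs}$; combining this with Cauchy--Schwarz and the variance bound $\E[\ltwo{\esterr}^2] \leq D^2$ from Lemma~\ref{lemma:oracle-variance-bound} yields the $C\log^2(d)(\momentbound^2\fulltracelambda)^2 D^2/\numobs$ term. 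The tail part is treated deterministically using $\eigenvalue_{d+1}$ and $\tailsum$ to measure the truncation error, and the a-priori size estimates $\hnorm{\foptimalreg}\leq\radius$ together with the moment $\altvarreg$ (which controls $\esterr$ through $\radius$ and $\altvarreg/\sqrt{\lambda}$) to supply the $\sqrt{32\radius^4 + 8\altvarreg^4/\lambda^2}$ prefactor, producing the last term in the bound.

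The hardest step will be executing the head-part concentration in the oracle setting: the residuals $\xi_i$ only have finite moments of order $\moment$ (via Assumption~\ref{assumption:response-moment-bound}) and are not conditionally centered, so the Bernstein-style arguments used under Assumption~\ref{assu:function-space} must be replaced by higher-moment versions, with careful tracking of the effective dimension $\fulltracelambda$ in place of the ambient $d$. Once the three contributions are combined, using $\lambda \geq \lambdabase$ to absorb the mixed terms, the stated inequality follows directly.
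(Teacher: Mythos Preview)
Your resolvent-identity decomposition is essentially the paper's approach written at the operator level: the paper takes expectations of the first-order condition, truncates $\esterr$ to its first $d$ Mercer modes, and works with the matrix $Q=(I+\lambda M^{-1})^{1/2}$, arriving at exactly the three pieces you list. So the overall route is correct. Two points, however, need correction.

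\textbf{The residuals disappear after taking expectations.} Your final paragraph misidentifies the hard step. Once you take expectations in
\[
\esterr=(\Sigma+\lambda I)^{-1}\Bigl[\tfrac{1}{n}\textstyle\sum_i \xi_i K(\cdot,x_i)-\lambda\foptimalreg\Bigr]+(\Sigma+\lambda I)^{-1}(\Sigma-\hat\Sigma)\esterr,
\]
the optimality condition $\E[\xi\,K(\cdot,X)]=\lambdabase\foptimalreg$ kills the noise \emph{exactly}, leaving only the deterministic $(\lambdabase-\lambda)(\Sigma+\lambda I)^{-1}\foptimalreg$. The second-order term $\E[(\Sigma+\lambda I)^{-1}(\Sigma-\hat\Sigma)\esterr]$ contains no $\xi_i$ at all; its concentration depends solely on the design through $\hat\Sigma-\Sigma$ and is controlled by the moments of the $\basis_j$ (Assumption~\ref{assumption:kernel}), not by Assumption~\ref{assumption:response-moment-bound}. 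There is no ``higher-moment Bernstein for uncentered residuals'' to prove here --- the matrix-moment bound used is literally the same as in the $\fopt\in\HilbertSpace$ case. The residual moments $\altvarreg$ enter only through the size of $\esterr$ itself (via Lemma~\ref{lemma:oracle-variance-bound} and the a~priori Hilbert-norm bound), not through any concentration argument.

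\textbf{The Cauchy--Schwarz pairing needs the weighted norm, not $\ltwo{\esterr}$.} After whitening you have
\[
\ltwo{(\Sigma+\lambda I)^{-1}(\Sigma-\hat\Sigma)\esterr}
\le \bigl\|(\Sigma+\lambda I)^{-1/2}(\Sigma-\hat\Sigma)(\Sigma+\lambda I)^{-1/2}\bigr\|_{\mathrm{op}}
\cdot \bigl\|(\Sigma+\lambda I)^{1/2}\esterr\bigr\|_{\HilbertSpace},
\]
so the correct partner in Cauchy--Schwarz is $\E\bigl[\|(\Sigma+\lambda I)^{1/2}\truncate{\esterr}\|_{\HilbertSpace}^2\bigr]=\E\bigl[\ltwos{Q\truncate{\coorderr}}^2\bigr]$, not $\E[\ltwo{\esterr}^2]$. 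This weighted norm is indeed bounded by $D^2$, but that is \emph{not} the statement of Lemma~\ref{lemma:oracle-variance-bound}; it is an intermediate inequality obtained inside that lemma's proof (before adding back the tail contribution $\eigenvalue_{d+1}\diffsquare$). You must either invoke that intermediate bound or reprove it --- simply citing $\E[\ltwo{\esterr}^2]\le D^2$ would leave an extra $\lambda\E[\hnorm{\esterr}^2]$ unaccounted for.
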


\noindent See Appendices~\ref{appendix:proof-lemma-oracle-variance}
and~\ref{appendix:proof-lemma-oracle-expectation} for the proofs of these
two lemmas. \\

Given these two lemmas, we can now complete the proof of the theorem.
If the conditions~\eqref{eqn:oracle-condition} hold, we have
\begin{equation*}
\begin{split}
 & \tailsum \le \frac{\lambda}{(R^2 +
    \altvarreg^2/\lambda)\overallsize}, ~~~~ \eigenvalue_{d + 1} \le
  \frac{1}{(\radius^2 + \altvarreg^2 / \lambda) \totalnumobs}, \\
& \frac{\log^2(d)(\momentbound^2 \fulltracelambda)^2}{\numobs} \leq
  \frac{1}{\nmachine} ~~~~\mbox{and}~~~~ \left(\maxlog
  \frac{\momentbound^2 \fulltracelambda}{
    \sqrt{\numobs}}\right)^\moment \leq
  \frac{1}{(\radius^2 + \altvarreg^2/\lambda)\overallsize},
  \end{split}
\end{equation*}
so there is a universal constant $C'$ satisfying
\begin{equation*}
  \sqrt{32\radius^4+8\altvarreg^4/{\lambda^2}} \left(\eigenvalue_{d+1}
  + \frac{16\momentbound^4\tr(K)\tailsum}{\lambda} + \left(C \maxlog
  \frac{\momentbound^2 \fulltracelambda}{\sqrt{\numobs}}
  \right)^\moment \right) \le \frac{C'}{\totalnumobs}.
\end{equation*}
Consequently, Lemma~\ref{lemma:oracle-variance-bound} yields the upper
bound
\begin{align*}
\E[\ltwo{\esterr}^2] \le \frac{8 (\lambdabase - \lambda)^2
  \radius^2}{\lambda} + \frac{8 \fulltracelambda \momentbound^2
  \altvarreg^2}{\numobs} + \frac{C'}{\totalnumobs}.
\end{align*}
Since $\log^2(d)(\momentbound^2\fulltracelambda)^2 / \numobs \le
1/\nummac$ by assumption, we obtain
\begin{align*}
\E\left[\ltwos{\favg - \foptimalreg}^2\right] & \le \frac{C(\lambdabase
  - \lambda)^2 \radius^2}{\lambda \nummac} + \frac{C \fulltracelambda
  \momentbound^2 \altvarreg^2}{\totalnumobs} + \frac{C}{\totalnumobs
  \nummac} \\
& \quad ~ + \frac{4(\lambdabase - \lambda)^2 \radius^2}{\lambda} +
\frac{C (\lambdabase - \lambda)^2 \radius^2}{\lambda \nummac} + \frac{C
  \fulltracelambda \momentbound^2 \altvarreg^2}{\totalnumobs} +
\frac{C}{\totalnumobs \nummac} + \frac{C}{\totalnumobs},
\end{align*}
where $C$ is a universal constant (whose value is allowed to change
from line to line). Summing these bounds and using the condition that
$\lambda \ge \lambdabase$, we conclude that
\begin{equation*}
  \E\left[\ltwos{\favg - \foptimalreg}^2\right]
  \le \left(4 + \frac{C}{\nummac}\right)
  (\lambda - \lambdabase) \radius^2
  + \frac{C \fulltracelambda\momentbound^2  \altvarreg^2}{\totalnumobs}
  + \frac{C}{\totalnumobs}.
\end{equation*}
Combining this error bound with inequality~\eqref{eqn:oracle-bound-separation}
completes the proof.

\subsection{Proof of the bound~\eqref{EqnDefnResidualAlt}}
\label{sec:proof-residual-alternate}

Using Theorem~\ref{theorem:oracle}, it suffices to show that
\begin{align}
\label{eqn:new-fourth-moment-bound-for-arbitrary-regularization}
 \altvarreg^4 & \leq 8 \tr(K)^2 \hnorms{\foptimalreg}^4 \momentbound^4
 + 8\ymoment^4.
\end{align}
By the tower property of expectations and Jensen's inequality, we have
\begin{align*}
\altvarreg^4 = \E [ (\E [ (\foptimalreg(x) - Y)^2 \mid X = x ] )^2 ] &
\leq \E[(\foptimalreg(X)-Y)^4] \; \leq \;8 \E[ ( \foptimalreg(X))^4] +
8 \E [Y^4 ].
\end{align*}
Since we have assumed that $\E[Y^4]\leq \ymoment^4$, the only
remaining step is to upper bound $\E[(\foptimalreg(X))^4]$. Let
$\foptimalreg$ have expansion $(\theta_1, \theta_2, \ldots)$ in the
basis $\{\basis_j\}$. For any $x\in\XC$, H\"older's inequality applied
with the conjugates $4/3$ and $4$ implies the upper bound
\begin{align}
  \foptimalreg(x)
  &= \sum_{j=1}^\infty (\eigenvalue_j^{1/4}\theta_j^{1/2})
  \frac{\theta_j^{1/2}\basis_j(x)}{\eigenvalue_j^{1/4}}
  \leq \left( \sum_{j=1}^\infty \eigenvalue_j^{1/3}\theta_j^{2/3}
  \right)^{3/4}
  \left( \sum_{j=1}^\infty \frac{\theta_j^2}{\eigenvalue_j}\basis_j^4(x)
  \right)^{1/4}.
  \label{eqn:new-fourth-moment-holder-decomposition}
\end{align}
Again applying H\"older's inequality---this time with
conjugates $3/2$ and $3$---to upper bound the first term in
the product in
inequality~\eqref{eqn:new-fourth-moment-holder-decomposition}, we obtain
\begin{align}
  \sum_{j=1}^\infty \eigenvalue_j^{1/3}\theta_j^{2/3}
  = \sum_{j=1}^\infty \eigenvalue_j^{2/3}
  \left(\frac{\theta_j^2}{\eigenvalue_j}\right)^{1/3}
  \leq \bigg( \sum_{j=1}^\infty  \eigenvalue_j \bigg)^{2/3}
  \Bigg( \sum_{j=1}^\infty\frac{\theta_j^2}{\eigenvalue_j} \Bigg)^{1/3}
  = \tr(K)^{2/3}
  \hnorms{\foptimalreg}^{2/3}.
  \label{eqn:new-fourth-moment-cofficient-upper-bound}
\end{align}
Combining inequalities~\eqref{eqn:new-fourth-moment-holder-decomposition}
and~\eqref{eqn:new-fourth-moment-cofficient-upper-bound}, we find that
\begin{align*}
  \E[(\foptimalreg(X))^4]
  \leq \tr(K)^2 \hnorms{\foptimalreg}^2
  \sum_{j=1}^\infty \frac{\theta_j^2}{\eigenvalue_j} \E[\basis_j^4(X)]
  \leq \tr(K)^2 \hnorms{\foptimalreg}^4 \momentbound^4,
\end{align*}
where we have used
Assumption~\ref{assumption:kernel}. This completes the proof
of
inequality~\eqref{eqn:new-fourth-moment-bound-for-arbitrary-regularization}.


\section{Experimental results}

In this section, we report the results of experiments on both
simulated and real-world data designed to test the sharpness of our
theoretical predictions.

\subsection{Simulation studies}
\label{sec:experiments}

\newcommand{\normal}{\mathsf{N}} \newcommand{\uniform}{\mathsf{Uni}}

\begin{figure}
\begin{tabular}{ccc}
 \includegraphics[keepaspectratio,width=.45\columnwidth]{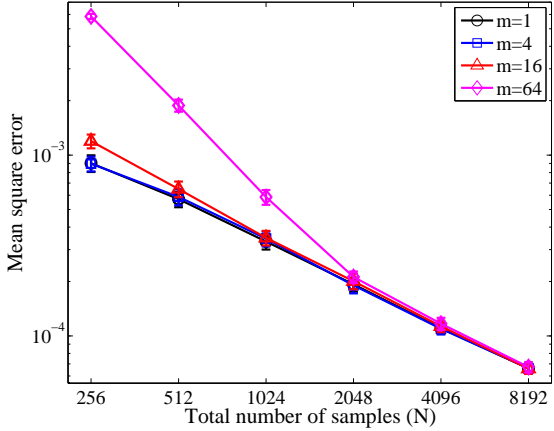}
 & ~~ &
 \includegraphics[keepaspectratio,
   width=.45\columnwidth]{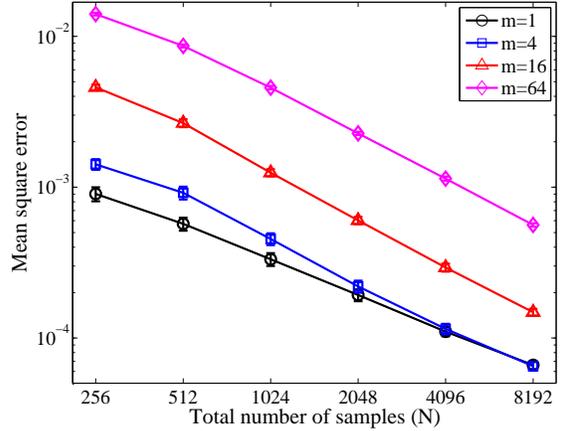} \\
(a) With under-regularization &~~&(b) Without under-regularization
  \end{tabular}
\label{fig:accuracy-compare}
  \caption{The squared $\lsspace$-norm between between the averaged
    estimate $\funcavg$ and the optimal solution $\foptimal$. (a)
    These plots correspond to the output of the \algname\ algorithm:
    each sub-problem is under-regularized by using $\lambda\sim
    \overallsize^{-2/3}$. (b) Analogous plots when each sub-problem is
    \emph{not} under-regularized---that is, with $\lambda \sim
    \nsample^{-2/3}$ is chosen as usual.}
\end{figure}

\begin{figure}
  \begin{tabular}{cc}
    \begin{minipage}{.6\columnwidth}
      \begin{center}
        \psfrag{log\(# of partitions\)/log\(# of samples\)}[Bc][Bc][.9]{
          $\log(\nummac) / \log(\totalnumobs)$}
        \includegraphics[width=.8\columnwidth]{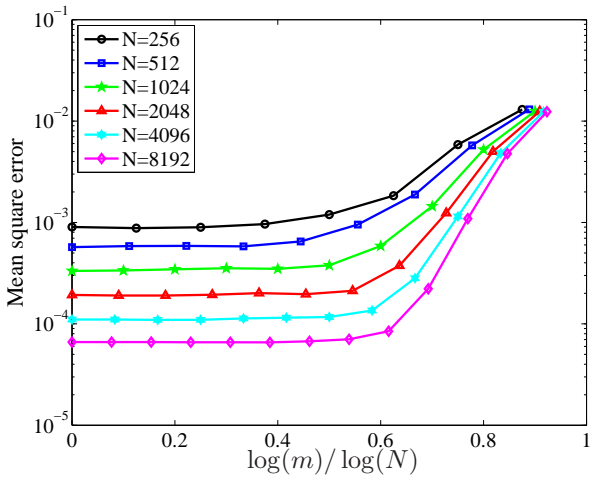}
      \end{center}
    \end{minipage} &
    \hspace{-1cm}
    \begin{minipage}{.42\columnwidth}
      \caption{\label{fig:threshold-compare} The mean-square error curves for
        fixed sample size but varied number of partitions. We are interested
        in the threshold of partitioning number $\nummac$ under which the
        optimal rate of convergence is achieved.}
    \end{minipage}
  \end{tabular}
\end{figure}

We begin by exploring the empirical performance of our
subsample-and-average methods for a non-parametric regression problem
on simulated datasets.  For all experiments in this section, we
simulate data from the regression model $y = \foptimal(x) + \noise$
for $x \in [0, 1]$, where $\foptimal(x) \defeq \min(x,1-x)$ is
$1$-Lipschitz, the noise variables $\noise \sim \normal(0, \stddev^2)$
are normally distributed with variance $\stddev^2 = 1/5$, and the
samples $x_i \sim \uniform[0, 1]$.  The Sobolev space of Lipschitz
functions on $[0, 1]$ has reproducing kernel $K(x, x') = 1 + \min\{x,
x'\}$ and norm $\hnorm{f}^2 = f^2(0) + \int_0^1 (f'(z))^2 dz$.  By
construction, the function $\foptimal(x) = \min(x, 1-x)$ satisfies
$\hnorm{\fopt} = 1$.  The kernel ridge regression estimator $\fapprox$
takes the form
\begin{equation}
  \label{eqn:krr-closed-form}
  \funcapprox = \sum_{i=1}^\overallsize \alpha_i \kernel(x_i, \cdot),
  ~~~ \mbox{where} ~~~
  \alpha = \left(K + \lambda \overallsize I
  \right)^{-1} y,
\end{equation}
and $K$ is the $\overallsize\times \overallsize$ Gram matrix and $I$ is the
$\overallsize\times \overallsize$ identity matrix.  Since the first-order
Sobolev kernel has eigenvalues~\cite{Gu2002} that scale as $\eigenvalue_j
\simeq (1/j)^2$, the minimax convergence rate in terms of squared
$L^2(\P)$-error is $\totalobs^{-2/3}$ (see e.g.~\cite{Tsybakov2009, Stone82,
  Caponnetto2007}).

By Corollary~\ref{corollary:sobolev-kernel} with $\smoothness = 1$,
this optimal rate of convergence can be achieved by \algname\ with
regularization parameter $\lambda \approx \totalobs^{-2/3}$, and as
long as the number of partitions $\nmachine$ satisfies
$\nmachine\lesssim \overallsize^{1/3}$. In each of our experiments, we
begin with a dataset of size $\totalnumobs = \nummac \nsample$, which
we partition uniformly at random into $\nummac$ disjoint subsets.  We
compute the local estimator $\fapprox_i$ for each of the $\nummac$
subsets using $\nsample$ samples via~\eqref{eqn:krr-closed-form},
where the Gram matrix is constructed using the $i$th batch of samples
(and $\nsample$ replaces $\totalobs$). We then compute $\funcavg = (1
/ \nummac) \sum_{i=1}^\nummac \fapprox_i$. Our experiments compare the
error of $\funcavg$ as a function of sample size $\totalobs$, the
number of partitions $\nummac$, and the regularization $\lambda$.

In Figure~\ref{fig:accuracy-compare}(a), we plot the error $\ltwos{\funcavg -
  \fopt}^2$ versus the total number of samples $\totalnumobs$, where
$\totalnumobs \in \{ 2^8,2^9,\dots,2^{13}\}$, using four different data
partitions $\nmachine\in\{1,4,16,64\}$. We execute each simulation $20$ times
to obtain standard errors for the plot.  The black circled curve
($\nmachine=1$) gives the baseline KRR error; if
the number of partitions \mbox{$\nummac \le 16$}, \algname\ has accuracy
comparable to the baseline algorithm. Even with $\nmachine=64$, \algname's
performance closely matches the full estimator for larger sample sizes
($\totalobs \ge 2^{11}$).  In the right plot
Figure~\ref{fig:accuracy-compare}(b), we perform an identical experiment, but
we over-regularize by choosing $\lambda = \nsample^{-2/3}$ rather than
$\lambda = \totalobs^{-2/3}$ in each of the $\nummac$ sub-problems, combining
the local estimates by averaging as usual. In contrast to
Figure~\ref{fig:accuracy-compare}(a), there is an obvious gap between the
performance of the algorithms when $\nummac = 1$ and $\nummac > 1$, as our
theory predicts.

It is also interesting to understand the number of partitions $\nummac$
into which a dataset of size $\overallsize$ may be divided while
maintaining good statistical performance.
According to Corollary~\ref{corollary:sobolev-kernel} with
$\smoothness = 1$, for the first-order Sobolev kernel, performance
degradation should be limited as long as $\nummac \lesssim
\overallsize^{1/3}$. In order to test this prediction,
Figure~\ref{fig:threshold-compare} plots the mean-square error
$\ltwos{\funcavg - \fopt}^2$ versus the ratio $\log(\nummac) /
\log(\totalnumobs)$.  Our theory predicts that even as the number of
partitions $\nummac$ may grow polynomially in $\totalnumobs$, the
error should grow only above some constant value of $\log(\nummac) /
\log(\totalnumobs)$.  As Figure~\ref{fig:threshold-compare} shows, the
point that $\ltwos{\funcavg - \fopt}$ begins to increase appears to be
around $\log(\nummac) \approx 0.45 \log(\totalnumobs)$ for reasonably
large $\overallsize$. This empirical performance is somewhat better
than the $(1/3)$ thresholded predicted by
Corollary~\ref{corollary:sobolev-kernel}, but it does confirm that the number
of partitions $\nummac$ can scale polynomially with
$\totalnumobs$ while retaining minimax optimality.

\begin{table}
  \begin{center}
  \begin{tabular}{|c|c|c|c|c|c|c|}
    \hline $\totalnumobs$ & & $\nummac = 1$ & $\nummac = 16$ &
    $\nummac = 64$ & $\nummac = 256$ & $\nummac = 1024$ \\ \hline
    \multirow{2}{*}{$ 2^{12}$} & Error & $1.26 \cdot
    10^{-4}$ & $1.33 \cdot 10^{-4}$ & $1.38 \cdot 10^{-4}$ &
    \multirow{2}{*}{N/A} & \multirow{2}{*}{N/A} \\ & Time & $1.12$
    ($0.03$) & $0.03$ ($0.01$) & $0.02$ ($0.00$) & & \\ \hline
    \multirow{2}{*}{$ 2^{13}$} & Error & $6.40 \cdot 10^{-5}$ & $6.29
    \cdot 10^{-5}$ & $6.72 \cdot 10^{-5}$ & \multirow{2}{*}{N/A} &
    \multirow{2}{*}{N/A} \\ & Time & $5.47$ ($0.22$) & $0.12$ ($0.03$)
    & $0.04$ ($0.00$) & & \\ \hline \multirow{2}{*}{$ 2^{14}$} & Error
    & $3.95 \cdot 10^{-5}$ & $4.06 \cdot 10^{-5}$ & $4.03 \cdot
    10^{-5}$ & $3.89 \cdot 10^{-5}$ & \multirow{2}{*}{N/A} \\ & Time &
    $30.16$ ($0.87$) & $0.59$ ($0.11$) & $0.11$ ($0.00$) & $0.06$
    ($0.00$) & \\ \hline \multirow{2}{*}{$ 2^{15}$} & Error &
    \multirow{2}{*}{Fail} & $2.90 \cdot 10^{-5}$ & $2.84 \cdot
    10^{-5}$ & $2.78 \cdot 10^{-5}$ & \multirow{2}{*}{N/A} \\ & Time &
    & $2.65$ ($0.04$) & $0.43$ ($0.02$) & $0.15$ ($0.01$) &\\ \hline
    \multirow{2}{*}{$ 2^{16}$} & Error & \multirow{2}{*}{Fail} & $1.75
    \cdot 10^{-5}$ & $1.73 \cdot 10^{-5}$ & $1.71 \cdot 10^{-5}$ &
    $1.67 \cdot 10^{-5}$ \\ & Time & & $16.65$ ($0.30$) & $2.21$
    ($0.06$) & $0.41$ ($0.01$) & $0.23$ ($0.01$) \\ \hline
    \multirow{2}{*}{$ 2^{17}$} & Error & \multirow{2}{*}{Fail} & $1.19
    \cdot 10^{-5}$ & $1.21 \cdot 10^{-5}$ & $1.25 \cdot 10^{-5}$ &
    $1.24 \cdot 10^{-5}$ \\ & Time & & $90.80$ ($3.71$) & $10.87$
    ($0.19$) & $1.88$ ($0.08$) & $0.60$ ($0.02$) \\ \hline
  \end{tabular}
  \end{center}
  \caption{\label{table:timing} Timing experiment giving
    $\ltwos{\funcavg - \fopt}^2$ as a function of number of partitions
    $\nummac$ and data size $\totalnumobs$, providing mean run-time (measured in second) for
    each number $\nummac$ of partitions and data size $\totalnumobs$.}
\end{table}

Our final experiment gives evidence for the improved time complexity
partitioning provides. Here we compare the amount of time required to
solve the KRR problem using the naive matrix
inversion~\eqref{eqn:krr-closed-form} for different partition sizes
$\nummac$ and provide the resulting squared errors $\ltwos{\funcavg -
  \fopt}^2$.  Although there are more sophisticated solution
strategies, we believe this is a reasonable proxy to exhibit \algname's
potential. In Table~\ref{table:timing}, we present the results of this
simulation, which we performed in Matlab using a Windows machine with
16GB of memory and a single-threaded 3.4Ghz processor. In each entry
of the table, we give the mean error of \algname\ and the mean amount
of time it took to run (with standard deviation over 10 simulations in
parentheses; the error rate standard deviations are an order of
magnitude smaller than the errors, so we do not report them). The
entries ``Fail'' correspond to out-of-memory failures because of the
large matrix inversion, while entries ``N/A'' indicate that
$\ltwos{\funcavg - \fopt}$ was significantly larger than the optimal
value (rendering time improvements meaningless). The table shows that
without sacrificing accuracy, decomposition via \algname\ can yield
substantial computational improvements.


\subsection{Real data experiments}
\label{sec:real-data}

\begin{figure}[t]
  \begin{center}
  \includegraphics[width=.55\columnwidth]{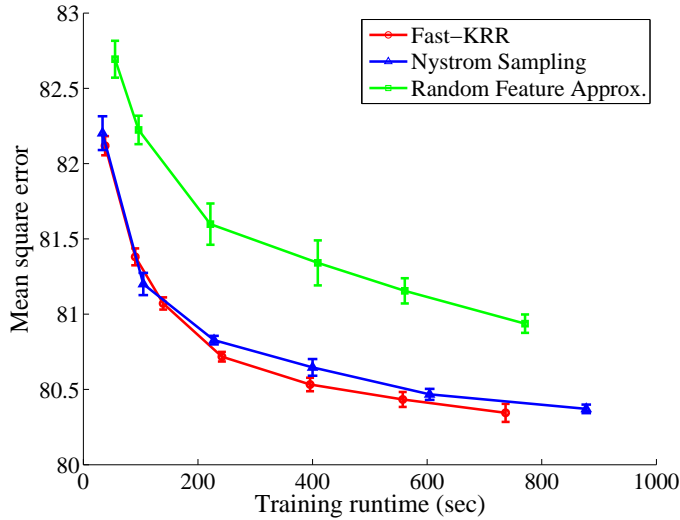}
    \caption{\label{fig:year-prediction-results} Results on year prediction
      on held-out test songs for \algname, Nystr\"om sampling, and
      random feature approximation. Error bars indicate standard deviations
      over ten experiments.}
  \end{center}
\end{figure}

We now turn to the results of experiments studying the performance of
\algname\ on the task of predicting the year in which a song was
released based on audio features associated with the song.  We use the
Million Song Dataset~\cite{Bertin2011}, which consists of 463,715
training examples and a second set of 51,630 testing examples. Each
example is a song (track) released between 1922 and 2011, and the song
is represented as a vector of timbre information computed about the
song.  Each sample consists of the pair $(x_i, y_i) \in \R^d \times
\R$, where $x_i \in \R^d$ is a $d = 90$-dimensional vector and $y_i
\in [1922, 2011]$ is the year in which the song was released.
(For further details, see the paper~\cite{Bertin2011}).

Our experiments with this dataset use the Gaussian radial
basis kernel
\begin{align}
  \label{eqn:year-prediction-gaussian-kernel}
  \kernel(x, x') = \exp\left(-\frac{\ltwo{x-x'}^2}{2\sigma^2}\right).
\end{align}
We normalize the feature vectors $x$ so that the timbre signals have
standard deviation $1$, and select the bandwidth parameter $\sigma =
6$ via cross-validation.  For regularization, we set
$\lambda = \totalobs^{-1}$; since the Gaussian kernel has
exponentially decaying eigenvalues (for typical distributions on $X$),
Corollary~\ref{corollary:gaussian-kernel} shows that this
regularization achieves the optimal rate of convergence for the
Hilbert space.

In Figure~\ref{fig:year-prediction-results}, we compare the
time-accuracy curve of \algname\ with two modern approximation-based
methods, plotting the mean-squared error between the predicted release
year and the actual year on test songs. The first baseline algorithm
is Nystr\"om subsampling~\cite{Williams2001}, where the kernel matrix
is approximated by a low-rank matrix of rank
$D\in\{1000,2000,3000,4000,5000,6000\}$. The second baseline approach
is an approximate form of kernel ridge regression using random
features~\cite{Rahimi2007}. The algorithm approximates the Gaussian
kernel~\eqref{eqn:year-prediction-gaussian-kernel} by the inner
product of two random feature vectors of dimensions
$D\in\{2000,3000,5000,7000,8500,10000\}$, and then solves the
resulting linear regression problem.  For the \algname\ algorithm, we
use seven partitions $\nummac\in\{32,38,48,64,96,128,256\}$ to test
the algorithm.  Each algorithm is executed 10 times to obtain standard
deviations (plotted as error-bars in
Figure~\ref{fig:year-prediction-results}).

As we see in Figure~\ref{fig:year-prediction-results}, for a fixed
time budget, \algname\ enjoys the best performance, though the margin
between \algname\ and Nystr\"om sampling is not substantial.  In spite
of this close performance between Nystr\"om sampling and the
divide-and-conquer \algname\ algorithm, it is worth noting that with
parallel computation, it is trivial to accelerate \algname\ $\nummac$
times; parallelizing approximation-based methods appears to be a
non-trivial task. Moreover, as our results in
Section~\ref{sec:main-result} indicate, \algname\ is minimax optimal
in many regimes. At the same time the conference version of this paper
was submitted, \citet{Bach2013} published the first results we know of
establishing convergence results in $\ell_2$-error for Nystr\"om
sampling; see the discussion for more detail. We note in passing that
standard linear regression with the original 90 features, while quite
fast with runtime on the order of 1 second (ignoring data loading),
has mean-squared-error $90.44$, which is significantly worse than the
kernel-based methods.

\begin{figure}
  \begin{center}
    \includegraphics[width=.55\columnwidth]{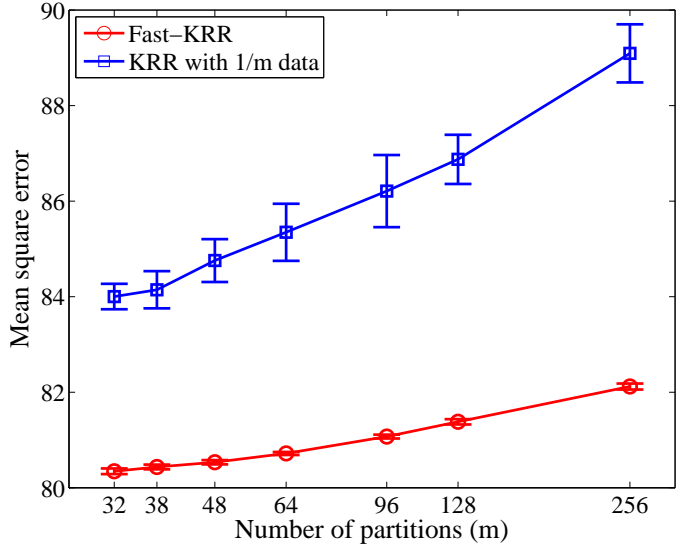}
    \caption{\label{fig:year-prediction-average-versus-fraction} Comparison of
      the performance of \algname\ to a standard KRR estimator using a
      fraction $1/m$ of the data.}
  \end{center}
\end{figure}


Our final experiment provides a sanity check: is the final averaging
step in \algname\ even necessary?  To this end, we compare
\algname\ with standard KRR using a fraction $1/\nummac$ of the
data. For the latter approach, we employ the standard regularization
$\lambda \approx (\totalobs/\nummac)^{-1}$. As
Figure~\ref{fig:year-prediction-average-versus-fraction} shows,
\algname\ achieves much lower error rates than KRR using only a
fraction of the data. Moreover, averaging stabilizes the estimators:
the standard deviations of the performance of \algname\ are negligible
compared to those for standard KRR.


\section{Discussion}
\label{sec:discussion}
\newcommand{\margdim}{\widetilde{\gamma}(\lambda)}

In this paper, we present results establishing that our
decomposition-based algorithm for kernel ridge regression achieves
minimax optimal convergence rates whenever the number of splits
$\nummac$ of the data is not too large.  The error guarantees of our
method depend on the \emph{effective dimensionality} $\fulltracelambda
= \sum_{j=1}^\infty \eigenvalue_j / (\eigenvalue_j + \lambda)$ of the
kernel.  For any number of splits $\nummac \lesssim \totalnumobs /
\fulltracelambda^2$, our method achieves estimation error decreasing
as
\begin{equation*}
  \E\left[\ltwos{\bar{f} - \fopt}^2\right]
  \lesssim \lambda \hnorm{\fopt}^2
  + \frac{\stddev^2 \fulltracelambda}{\totalnumobs}.
\end{equation*}
(In particular, recall the bound~\eqref{eqn:competing-terms} following
Theorem~\ref{theorem:error-bound}).  Notably, this convergence rate is
minimax optimal, and we achieve substantial computational benefits
from the subsampling schemes, in that computational cost scales
(nearly) linearly in $\totalnumobs$.

\newcommand{\fulltravcelambdasq}{\gamma^2(\lambda)}

It is also interesting to consider the number of kernel evaluations
required to implement our method.  Our estimator requires $\nummac$
sub-matrices of the full kernel (Gram) matrix, each of size
$\totalnumobs / \nummac \times \totalnumobs / \nummac$.  Since the
method may use $\nummac \asymp \totalnumobs / \fulltracelambdasq$
machines, in the best case, it requires at most $\totalnumobs
\fulltracelambdasq$ kernel evaluations. By contrast, \citet{Bach2013}
shows that Nystr\"om-based subsampling can be used to form an
estimator within a constant factor of optimal as long as the number of
$\totalnumobs$-dimensional subsampled columns of the kernel matrix
scales roughly as the \emph{marginal dimension} $\margdim =
\totalnumobs \linf{\diag(K(K + \lambda \totalnumobs I)^{-1})}$.
Consequently, using roughly $\totalnumobs \margdim$ kernel
evaluations, Nystr\"om subsampling can achieve optimal convergence
rates. These two scalings--namely, $\totalnumobs \fulltracelambdasq$
versus $\totalnumobs \margdim$---are currently not comparable: in some
situations, such as when the data is not compactly supported,
$\margdim$ can scale linearly with $\totalnumobs$, while in others it
appears to scale roughly as the true effective dimensionality
$\fulltracelambda$.
A natural question arising from these lines of work is to
understand the true optimal scaling for these different estimators: is
one fundamentally better than the other? Are there natural computational
tradeoffs that can be leveraged at large scale?
As datasets grow substantially larger and more complex, these
questions should become even more important, and we hope to continue
to study them.


\paragraph{Acknowledgements:}

We thank Francis Bach for interesting and enlightening conversations on the
connections between this work and his paper~\cite{Bach2013} and Yining Wang
for pointing out a mistake in an earlier version of this manuscript.  JCD was
supported by a National Defense Science and Engineering Graduate Fellowship
(NDSEG) and a Facebook PhD fellowship.  This work was partially supported by
ONR MURI grant N00014-11-1-0688 to MJW.


\appendix

\section{Proof of Lemma~\ref{lemma:norm-expectation-moments}}
\label{sec:proof-bias}

\begin{table}
  \begin{center}
    \begin{tabular}{|p{.08\columnwidth} p{.85\columnwidth}|}
      \hline $\fapprox$ & Empirical KRR minimizer based on $\numobs$
      samples \\
      $\fopt$ & Optimal function generating data, where
      $y_i = \foptimal(x_i) + \noise_i$ \\
      $\esterr$ & Error
      $\funcapprox - \foptimal$ \\
      $\eval_x$ & RKHS evaluator $\eval_x
      \defeq K(x, \cdot)$, so $\<f, \eval_x\> = \<\eval_x, f\> = f(x)$  \\
      $\outprodmat$ & Operator mapping $\hilbertspace \rightarrow
      \hilbertspace$ defined as the outer product $\outprodmat \defeq
      \frac{1}{n} \sum_{i=1}^n \eval_{x_i} \otimes \eval_{x_i}$, so
      that $\outprodmat f = \frac{1}{n} \sum_{i=1}^n \<\eval_{x_i},
      f\> \eval_{x_i}$ \\
      $\basis_j$ & $j$th orthonormal basis vector
      for $\lspace$ \\
      $\coorderr_j$ & Basis coefficients of $\esterr$
      or $\E[\esterr \mid X]$ (depending on context), i.e.\ $\esterr =
      \sum_{j=1}^\infty \coorderr_j \basis_j$ \\
      $\theta_j$ & Basis
      coefficients of $\fopt$, i.e.\ $\fopt = \sum_{j = 1}^\infty
      \theta_j \basis_j$ \\
      $d$ & Integer-valued truncation point  \\
      $\eigmat$ & Diagonal matrix with $\eigmat =
      \diag(\eigenvalue_1, \ldots, \eigenvalue_d)$ \\
      $Q$ & Diagonal matrix with
      $Q = (I_{d \times d} + \lambda \eigmat^{-1})^\half$ \\
      $\basismat$ & $n
      \times d$ matrix with coordinates $\basismat_{ij} =
      \basis_j(x_i)$ \\
      $\truncate{v}$ & Truncation of vector $v$. For
      $v = \sum_j \nu_j \basis_j \in \hilbertspace$, defined as
      $\truncate{v} = \sum_{j=1}^d \nu_j \basis_j$; for $v \in
      \ell_2(\N)$ defined as $\truncate{v} = (v_1, \ldots, v_d)$
      \\
      $\detruncate{v}$ & Untruncated part of vector $v$, defined as
      $\detruncate{v} = (v_{d + 1}, v_{d + 1}, \ldots)$ \\
      $\tailsum$
      & The tail sum $\sum_{j > d} \eigenvalue_j$ \\
      $\fulltracelambda$ &
      The sum $\sum_{j = 1}^\infty 1 / (1 + \lambda / \eigenvalue_j)$
      \\
      $\maxlog$ & The maximum $\max\{\sqrt{\max\{\moment,
        \log(d)\}}, \max\{\moment, \log(d)\} / \numobs^{1/2 -
        1/\moment}\}$ \\
      \hline
    \end{tabular}
    \caption{\label{table:notation} Notation used in proofs}
  \end{center}
\end{table}

This appendix is devoted to the bias bound stated in
Lemma~\ref{lemma:norm-expectation-moments}. Let $X = \{x_i\}_{i=1}^n$
be shorthand for the design
matrix, and define the error vector $\esterr = \fapprox - \fopt$. By
Jensen's inequality, we have $\ltwo{\E[\esterr]} \le
\E[\ltwo{\E[\esterr \mid X]}]$, so it suffices to provide a bound
on $\ltwo{\E[\esterr \mid X]}$.  Throughout this proof and the
remainder of the paper, we represent the kernel evaluator by the
function $\eval_x$, where $\eval_x \defeq K(x, \cdot)$ and $f(x) =
\<\eval_x, f\>$ for any $f \in \hilbertspace$.  Using this notation,
the estimate $\fapprox$ minimizes the empirical objective
\begin{equation}
  \frac{1}{\numobs} \sum_{i=1}^\numobs \left(\<\eval_{x_i},
  f\>_\HilbertSpace - y_i\right)^2 + \lambda \hnorm{f}^2.
  \label{eqn:local-empirical-objective}
\end{equation}
This objective is Fr\'echet differentiable, and as a consequence, the
necessary and sufficient conditions for
optimality~\citep{Luenberger1969} of $\fapprox$ are that
\begin{equation}
  \label{eqn:gradient-optimality}
  \frac{1}{n} \sum_{i=1}^n (\langle \eval_{x_i}, \fapprox -
  \fopt\rangle_\HilbertSpace - \noise_i) + \lambda \fapprox =
  \frac{1}{\numobs} \sum_{i=1}^\numobs (\langle \eval_{x_i}, \fapprox
  \rangle_\HilbertSpace - y_i) + \lambda \fapprox = 0.
\end{equation}
Taking conditional expectations over the noise variables
$\{\noise_i\}_{i=1}^\numobs$ with the design $X = \{x_i\}_{i=1}^\numobs$
fixed, we find that
\begin{equation*}
  \frac{1}{\numobs} \sum_{i=1}^n \eval_{x_i} \<\eval_{x_i},
  \E[\esterr \mid X]\>
  + \lambda \E[\fapprox \mid X] = 0.
\end{equation*}
Define the sample covariance operator $\outprodmat \defeq
\frac{1}{\numobs} \sum_{i=1}^\numobs \eval_{x_i} \otimes \eval_{x_i}$.
Adding and subtracting $\lambda \fopt$ from the above equation yields
\begin{align}
\label{eqn:opt-gradient-equation}
(\outprodmat + \lambda I) \E[\esterr \mid X] & = - \lambda \fopt.
\end{align}
Consequently, we see we have $\hnorm{\E[\esterr \mid X]} \le
\hnorm{\fopt}$, since $\outprodmat \succeq 0$.

We now use a truncation argument to reduce the problem to a finite
dimensional problem.  To do so, we let $\coorderr \in
\ell_2(\N)$ denote the coefficients of $\E[\esterr
  \mid X]$ when expanded in the basis $\{\basis_j\}_{j=1}^\infty$:
\begin{align}
\E[\esterr \mid X] & = \sum_{j=1}^\infty \coorderr_j \basis_j, \quad
\mbox{with $\coorderr_j = \inprod{\Exs[\esterr \mid
      X]}{\basis_j}_{L^2(\P)}$.}
\end{align}
For a fixed $d \in \N$, define the vectors $\truncate{\coorderr}
\defeq (\coorderr_1, \ldots, \coorderr_d)$ and $\detruncate{\coorderr}
\defeq (\coorderr_{d + 1}, \coorderr_{d + 2}, \ldots)$ (we
suppress dependence on $d$ for convenience). By the
orthonormality of the collection $\{\basis_j\}$, we have
\begin{align}
\label{eqn:truncated-decomposition}
  \ltwo{\E[\esterr \mid X]}^2 & = \ltwo{\coorderr}^2 =
  \ltwos{\truncate{\coorderr}}^2 + \ltwos{\detruncate{\coorderr}}^2.
\end{align}
We control each of the elements of the
sum~\eqref{eqn:truncated-decomposition} in turn.

\paragraph{Control of the term $\ltwos{\detruncate{\coorderr}}^2$:}

By definition, we have
\begin{align}
  \ltwos{\detruncate{\coorderr}}^2
  = \frac{\eigenvalue_{d +
      1}}{\eigenvalue_{d + 1}} \sum_{j = d + 1}^\infty \coorderr_j^2 &
  \, \leq \, \eigenvalue_{d + 1} \sum_{j = d+1}^\infty
  \frac{\coorderr_j^2}{\eigenvalue_j} 
  \, \stackrel{(i)}{\leq} \, \eigenvalue_{d + 1} \hnorm{\E[\esterr \mid
      X]}^2 
  \label{eqn:kill-detruncated}
  \, \stackrel{(ii)} \, \leq \eigenvalue_{d + 1} \hnorm{\fopt}^2,
\end{align}
where inequality (i) follows since $\hnorm{\E[\esterr \mid X]}^2 =
\sum_{j=1}^\infty \frac{\coorderr_j^2}{\eigenvalue_j}$; and inequality
(ii) follows from the bound \mbox{$\hnorm{\E[\esterr \mid X]} \le
  \hnorm{\fopt}$,} which is a consequence of
equality~\eqref{eqn:opt-gradient-equation}.


\paragraph{Control of the term $\ltwos{\truncate{\coorderr}}^2$:}

Let $(\theta_1, \theta_2, \ldots)$ be the coefficients of $\fopt$ in
the basis $\{\basis_j\}$. In addition, define the matrices $\basismat
\in \R^{\numobs \times d}$ by
\begin{equation*}
  \basismat_{ij} = \basis_j(x_i) ~~ \mbox{for~} i \in \{1, \ldots,
  \numobs\}, \mbox{ and } j \in \{1, \ldots, d\}
\end{equation*}
and $\eigmat = \diag(\eigenvalue_1, \ldots, \eigenvalue_d) \succ 0 \in
\R^{d \times d}$.  Lastly, define the tail error vector $v \in
\R^\numobs$ by
\begin{align*}
v_i & \defn \sum_{j > d} \coorderr_j \basis_j(x_i) =
\E[\detruncate{\esterr}(x_i) \mid X].
\end{align*}
Let $l \in \N$ be arbitrary. Computing the (Hilbert) inner product of
the terms in equation~\eqref{eqn:opt-gradient-equation} with
$\basis_l$, we obtain
\begin{align*}
  \lefteqn{-\lambda \frac{\theta_l}{\eigenvalue_l}
    = \<\basis_l, -\lambda \fopt\>
    = \<\basis_l, (\outprodmat + \lambda) \E[\esterr \mid X]\>} \\
  & \quad = \frac{1}{\numobs} \sum_{i = 1}^\numobs \<\basis_l, \eval_{x_i}\>
  \<\eval_{x_i}, \E[\esterr \mid X]\> + \lambda \<\basis_l,
  \E[\esterr \mid X]\>
  = \frac{1}{\numobs}
  \sum_{i=1}^\numobs \basis_l(x_i) \E[\esterr(x_i) \mid X]
  + \lambda \frac{\coorderr_l}{\eigenvalue_l}.
\end{align*}
We can rewrite the final sum above using the fact that
$\esterr = \truncate{\esterr} + \detruncate{\esterr}$, which implies
\begin{equation*}
  \frac{1}{\nsample} \sum_{i=1}^\nsample
  \basis_l(x_i) \E[\esterr(x_i) \mid X]
  = \frac{1}{\nsample} \sum_{i=1}^\nsample \basis_l(x_i)
  \bigg(\sum_{j=1}^d
  \basis_j(x_i) \coorderr_j + \sum_{j > d} \basis_j(x_i) \coorderr_j\bigg)
\end{equation*}
Applying this equality for $l = 1, 2, \ldots, d$ yields
\begin{equation}
  \label{eqn:truncated-opt-gradient-equation}
  \left(\frac{1}{\numobs} \basismat^T \basismat + \lambda
  \eigmat^{-1}\right) \truncate{\coorderr} = - \lambda \eigmat^{-1}
  \truncate{\theta} - \frac{1}{n} \basismat^T v.
\end{equation}

We now show how the expression~\eqref{eqn:truncated-opt-gradient-equation}
gives us the desired bound in the lemma. By definining the shorthand matrix $Q
= (I + \lambda \eigmat^{-1})^{1/2}$, we have
\begin{equation*}
  \frac{1}{\numobs} \basismat^T \basismat + \lambda \eigmat^{-1}
  = I + \lambda \eigmat^{-1} + \frac{1}{n} \basismat^T \basismat - I
  = Q\left(I + Q^{-1}\left(\frac{1}{n} \basismat^T \basismat - I\right)Q^{-1}\right)Q.
\end{equation*}
As a consequence, we can rewrite
expression~\eqref{eqn:truncated-opt-gradient-equation} to
\begin{equation}
  \left(I + Q^{-1} \left(\frac{1}{n} \basismat^T \basismat - I\right) Q^{-1} \right) Q
  \truncate{\coorderr}
  = -\lambda Q^{-1} \eigmat^{-1} \truncate{\theta} - \frac{1}{n} Q^{-1} \basismat^T v.
  \label{eqn:inverted-truncated-opt-gradient}
\end{equation}
We now present a lemma bounding the terms in
equality~\eqref{eqn:inverted-truncated-opt-gradient} to control
$\truncate{\coorderr}$.

\begin{lemma}
  \label{LemBigLemma}
  The following bounds hold:
  \begin{subequations}
    \begin{align}
      \label{claim:truncated-fopt-small}
      \ltwo{\lambda Q^{-1} \eigmat^{-1} \truncate{\theta}}^2 \le \lambda
      \hnorm{\fopt}^2, \quad \mbox{and} \\
      \label{claim:detruncated-err-small}
      \E\left[\ltwo{\frac{1}{n} Q^{-1} \basismat^T v}^2\right] \le
      \frac{\momentbound^4 \hnorm{\fopt}^2 \tr(K) \tailsum}{\lambda}.
    \end{align}
  \end{subequations}
  Define the event $\event \defeq \left\{\matrixnorm{Q^{-1} \left(\frac{1}{n} \basismat^T \basismat - I\right) Q^{-1}} \leq 1/2\right\}$.  Under
  Assumption~\ref{assumption:kernel} with moment bound $\E[\basis_j(X)^{2
      \moment}] \le \momentbound^{2\moment}$, there exists a universal constant
  $C$ such that
  \begin{equation}
    \label{claim:bias-probability-small}
    \P(\event^c)
    \le \left(\max\left\{\sqrt{\moment \vee \log(d)}, \frac{\moment
      \vee \log(d)}{ \numobs^{1/2 - 1/\moment}} \right\} \frac{C
      \momentbound^2 \fulltracelambda}{\sqrt{\numobs}}\right)^\moment.
  \end{equation}
\end{lemma}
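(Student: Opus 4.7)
The three bounds are of quite different character, so I would treat them separately in order of increasing difficulty.

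For the first inequality~\eqref{claim:truncated-fopt-small}, this is a one-line algebraic manipulation. Since both $Q$ and $\eigmat$ are diagonal, the left-hand side equals $\sum_{j=1}^d \lambda^2 \theta_j^2 / (\eigenvalue_j(\eigenvalue_j + \lambda))$. Bounding the factor $\lambda/(\eigenvalue_j + \lambda) \le 1$ and extending the sum to all $j \ge 1$ yields the upper bound $\lambda \sum_j \theta_j^2/\eigenvalue_j = \lambda \hnorm{\fopt}^2$.

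For the second inequality~\eqref{claim:detruncated-err-small}, the key observation is that truncation index $l \le d$ and tail index $j > d$ satisfy $\E[\basis_l(X)\basis_j(X)] = 0$ by orthonormality of the basis in $L^2(\P)$. The plan is to expand
\[
  \Big(\tfrac{1}{n}\basismat^T v\Big)_l
  \;=\; \sum_{j > d} \coorderr_j \cdot \tfrac{1}{n}\sum_{i=1}^n \basis_l(x_i)\basis_j(x_i),
\]
and apply Cauchy--Schwarz over the index $j$ using the pointwise a priori bound $\sum_{j>d} \coorderr_j^2/\eigenvalue_j \le \hnorm{\E[\esterr \mid X]}^2 \le \hnorm{\fopt}^2$, which itself follows from the identity $(\outprodmat + \lambda I) \E[\esterr \mid X] = -\lambda \fopt$ derived just before the lemma. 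Taking expectations, the cross terms vanish by orthogonality, leaving only the diagonal contribution $n \,\E[\basis_l^2\basis_j^2] \le n\momentbound^4$ (using $\E[\basis_j^4] \le \momentbound^4$ from Assumption~\ref{assumption:kernel}). Finally, the $Q^{-2}$ weights $\eigenvalue_l/(\eigenvalue_l + \lambda) \le \eigenvalue_l/\lambda$ collapse the remaining sum via $\sum_l \eigenvalue_l \le \tr(K)$, producing the claimed bound.

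The probability bound~\eqref{claim:bias-probability-small} is the main obstacle and requires a genuine matrix concentration argument. Writing $\phi_i = (\basis_1(x_i),\ldots,\basis_d(x_i))^T$, introduce the iid centered $d \times d$ matrices
\[
  Y_i \;\defeq\; Q^{-1}\phi_i\phi_i^T Q^{-1} - Q^{-2},
\]
so that the event of interest is $\{\matrixnorm{n^{-1}\sum_i Y_i} > 1/2\}$. Markov's inequality at the $\moment$th moment reduces the problem to bounding $\E[\matrixnorm{n^{-1}\sum_i Y_i}^\moment]$, for which I would invoke a noncommutative Rosenthal-type inequality. This bounds the operator-norm moment by the sum of a variance term proportional to $n^{-1/2}\sqrt{\moment \vee \log d} \cdot \matrixnorm{\E[Y_1^2]}^{1/2}$ and a heavy-tail term proportional to $(\moment \vee \log d)\,n^{1/\moment - 1}\,\E[\matrixnorm{Y_1}^\moment]^{1/\moment}$. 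A direct computation using the definition of $Q$ together with Minkowski's inequality shows $\ltwo{Q^{-1}\phi}^2 = \sum_l (\eigenvalue_l/(\eigenvalue_l+\lambda))\basis_l(x)^2$, whose $\moment$th root of moment is controlled by $\momentbound^2 \fulltracelambda$; an analogous calculation gives $\matrixnorm{\E[Y_1^2]} \lesssim \momentbound^4 \fulltracelambda^2$. Matching the two regimes of the Rosenthal bound then reproduces the $\sqrt{\moment \vee \log d}$ and $(\moment \vee \log d)/n^{1/2 - 1/\moment}$ branches of $\maxlog$.

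The hard part is obtaining the correct dependence on the effective dimension $\fulltracelambda$ rather than on the nominal dimension $d$: a naive matrix Bernstein bound would introduce a factor of $d$, which is far too crude since $d$ is being chosen freely in Theorem~\ref{theorem:error-bound}. What saves us is that the diagonal conjugation by $Q^{-1}$ down-weights every coordinate by $\eigenvalue_l/(\eigenvalue_l + \lambda)$, so the variance proxy collapses to $\fulltracelambda$ rather than $d$. The logarithmic factor $\log d$ enters as the standard penalty for passing from trace-type variance bounds to operator-norm bounds (via an $\varepsilon$-net or noncommutative Khintchine argument), and is mild enough that the free choice of $d$ remains useful in the downstream application.
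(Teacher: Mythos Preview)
Your proof of~\eqref{claim:truncated-fopt-small} and your approach to~\eqref{claim:bias-probability-small} match the paper's argument essentially line for line: the paper also applies Markov at exponent $\moment$ after a symmetrization step, then invokes a matrix Rosenthal inequality (Theorem~A.1(2) of Chen et~al.) whose two branches --- a weak-variance term scaling as $\sqrt{\moment\vee\log d}\cdot\momentbound^2\fulltracelambda/\sqrt{\numobs}$ and a maximal term scaling as $(\moment\vee\log d)\,\numobs^{1/\moment-1}\momentbound^2\fulltracelambda$ --- produce exactly the two cases in $\maxlog$. Your diagnosis of why $\fulltracelambda$ rather than $d$ appears is correct and is precisely the point of the $Q^{-1}$ conjugation.

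For~\eqref{claim:detruncated-err-small} you take a different and in fact sharper route. The paper bounds $(\basismat_\ell^T v)^2\le\ltwo{\basismat_\ell}^2\ltwo{v}^2$ by Cauchy--Schwarz in the sample index and then decouples the two factors via H\"older's inequality; this pays a full $\numobs^2$ inside the expectation, which only cancels against the outer $1/\numobs^2$. Your argument instead applies Cauchy--Schwarz in the tail index $j$ (with weights $\eigenvalue_j^{\pm 1/2}$), uses the deterministic bound $\sum_{j>d}\coorderr_j^2/\eigenvalue_j\le\hnorm{\fopt}^2$ to eliminate the random $\coorderr$-dependence, and then exploits the orthogonality $\E[\basis_\ell(X)\basis_j(X)]=0$ for $\ell\le d<j$ so that the off-diagonal terms in the sample index vanish. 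Carrying the normalization through, this yields $\momentbound^4\hnorm{\fopt}^2\tr(K)\tailsum/(\numobs\lambda)$, an extra factor of $1/\numobs$ beyond what the lemma claims and what the paper proves. Either approach suffices for the downstream application, since this term is already of lower order in the bound~\eqref{EqnGoodUpper}.
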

\noindent We defer the proof of this lemma to
Appendix~\ref{AppLemBigLemma}.

Based on this lemma, we can now complete the proof.  Whenever the
event $\event$ holds, we know that $I + Q^{-1}((1/\numobs)
\basismat^T \basismat - I)Q^{-1} \succeq (1/2) I$.  In particular, we have
\begin{equation*}
  \ltwos{\truncate{Q \coorderr}}^2
  \le 4 \ltwo{\lambda Q^{-1} \eigmat^{-1} \truncate{\theta}
    + (1/n) Q^{-1} \basismat^T v}^2
\end{equation*}
on $\event$, by Eq.~\eqref{eqn:inverted-truncated-opt-gradient}. Since $\ltwos{\truncate{Q \coorderr}}^2 \geq \ltwos{\truncate{\coorderr}}^2$,
the above inequality implies that
\begin{equation*}
  \ltwos{\truncate{\coorderr}}^2
  \le 4 \ltwo{\lambda Q^{-1} \eigmat^{-1} \truncate{\theta}
    + (1/n) Q^{-1} \basismat^T v}^2
\end{equation*}

Since $\event$ is $X$-measureable, we thus obtain
\begin{align*}
  \E\left[\ltwos{\truncate{\coorderr}}^2\right]
  & = \E\left[\indic{\event}\ltwos{\truncate{\coorderr}}^2\right]
  + \E\left[\indic{\event^c}\ltwos{\truncate{\coorderr}}^2\right] \\
  & \le 4 \E\left[\indic{\event}\ltwo{
      \lambda Q^{-1} \eigmat^{-1} \truncate{\theta}
      + (1/n) Q^{-1} \basismat^T v}^2\right]
  + \E\left[\indic{\event^c}\ltwos{\truncate{\coorderr}}^2\right].
\end{align*}
Applying the bounds~\eqref{claim:truncated-fopt-small}
and~\eqref{claim:detruncated-err-small}, along with the elementary
inequality $(a + b)^2 \le 2a^2 + 2b^2$, we have
\begin{equation}
  \label{eqn:almost-bias-lemma}
  \E\left[\ltwos{\truncate{\coorderr}}^2\right]
  \le 8 \lambda \hnorm{\fopt}^2
  + \frac{8 \momentbound^4 \hnorm{\fopt}^2 \tr(K) \tailsum}{\lambda}
  + \E\left[\indic{\event^c}\ltwos{\truncate{\coorderr}}^2\right].
\end{equation}
Now we use the fact that by the gradient optimality
condition~\eqref{eqn:opt-gradient-equation}, $\ltwo{\E[\esterr \mid
    X]}^2 \le \ltwo{\fopt}^2$.  Recalling the
shorthand~\eqref{eqn:define-maxlog} for $\maxlog$, we apply the
bound~\eqref{claim:bias-probability-small} to see
\begin{equation*}
  \E\left[\indic{\event^c}\ltwos{\truncate{\coorderr}}^2\right]
  \le \P(\event^c) \ltwo{f^*}^2
  \le \left(\frac{C \maxlog \momentbound^2 \fulltracelambda}{\sqrt{\numobs}}
  \right)^k \ltwo{\fopt}^2.
\end{equation*}
Combining this with the inequality~\eqref{eqn:almost-bias-lemma},
we obtain the desired statement of Lemma~\ref{lemma:norm-expectation-moments}.


\subsection{Proof of Lemma~\ref{LemBigLemma}}
\label{AppLemBigLemma}

\paragraph{Proof of bound~\eqref{claim:truncated-fopt-small}:}

Beginning with the proof of the
bound~\eqref{claim:truncated-fopt-small}, we have
\begin{align*}
\ltwo{Q^{-1}\eigmat^{-1} \truncate{\theta}}^2 & =
(\truncate{\theta})^T(\eigmat^2 + \lambda \eigmat)^{-1} \truncate{\theta}\\
& \leq (\truncate{\theta})^T (\lambda \eigmat)^{-1}\truncate{\theta}
= \frac{1}{\lambda} (\truncate{\theta})^T \eigmat^{-1}
\truncate{\theta} \leq \frac{1}{\lambda}\hnorm{\fopt}^2.
\end{align*}
Multiplying both sides by $\lambda^2$ gives the result.


\paragraph{Proof of bound~\eqref{claim:detruncated-err-small}:}
Next we turn to the proof of the
bound~\eqref{claim:detruncated-err-small}.  We begin by re-writing
$Q^{-1} \basismat^T v$ as the product of two components:
\begin{equation}
\label{eqn:magic-product}
\frac{1}{\nsample}Q^{-1}\basismat^T v = (\eigmat + \lambda I)^{-1/2}\left( \frac{1}{\nsample}\eigmat^{1/2}\basismat^T
v\right).
\end{equation}
The first matrix is a diagonal matrix whose operator norm is bounded:
\begin{equation}
\matrixnorm{(\eigmat + \lambda I)^{-1/2}} =
\max_{j\in [d]} \frac{1}{\sqrt{\eigenvalue_j + \lambda}} \le \frac{1}{
  \sqrt{\lambda}}.
  \label{eqn:operator-inv-eigs}
\end{equation}

For the second factor in the product~\eqref{eqn:magic-product}, the
analysis is a little more complicated. Let \mbox{$\basismat_\ell =
  (\basis_l(x_1), \ldots, \basis_l(x_n))$} be the $\ell$th column of
$\basismat$.  In this case,
\begin{align}
  \ltwo{\eigmat^{1/2} \basismat^T v}^2 & = \sum_{\ell=1}^d
  \eigenvalue_\ell(\basismat_\ell^T v)^2 \leq \sum_{\ell=1}^d
  \eigenvalue_\ell \ltwo{\basismat_\ell}^2 \ltwo{v}^2,
  \label{eqn:eigenmat-cauchy}
\end{align}
using the Cauchy-Schwarz inequality.  Taking expectations with respect
to the design $\Data$ and applying H\"older's inequality yields
\begin{equation*}
  \E[\ltwo{\basismat_\ell}^2 \ltwo{v}^2]
  \le \sqrt{\E[\ltwo{\basismat_\ell}^4]} \sqrt{\E[\ltwo{v}^4]}.
\end{equation*}
We bound each of the terms in this product in turn. For the first, we
have
\begin{align*}
  \E[\ltwo{\basismat_\ell}^4]
  & = \E\bigg[\bigg(\sum_{i=1}^\numobs \basis_\ell^2(X_i)\bigg)^2\bigg]
  = \E\bigg[\sum_{i, j = 1}^\numobs \basis_\ell^2(X_i) \basis_\ell^2(X_j) \bigg]
  \le \numobs^2 \E[\basis_\ell^4(X_1)]
  \le \numobs^2 \momentbound^4
\end{align*}
since the $X_i$ are i.i.d., $\E[\basis_\ell^2(X_1)] \le
\sqrt{\E[\basis_\ell^4(X_1)]}$, and $\E[\basis_\ell^4(X_1)] \le \momentbound^4$ by
assumption.  Turning to the term involving $v$, we have
\begin{equation*}
  v_i^2 = \bigg(\sum_{j > d} \coorderr_j \basis_j(x_i)\bigg)^2
  \le \bigg(\sum_{j > d} \frac{\coorderr_j^2}{\eigenvalue_j}\bigg)
  \bigg(\sum_{j > d} \eigenvalue_j \basis_j^2(x_i)\bigg)
\end{equation*}
by Cauchy-Schwarz. As a consequence, we find
\begin{align*}
  \E[\ltwo{v}^4]
  = \E\bigg[\bigg(\numobs \frac{1}{\numobs} \sum_{i=1}^n v_i^2 \bigg)^2\bigg]
  \le \numobs^2 \frac{1}{\numobs}
  \sum_{i=1}^\numobs \E[v_i^4]
  & \le \numobs
  \sum_{i=1}^\numobs \E\bigg[\bigg(\sum_{j > d}
    \frac{\coorderr_j^2}{\eigenvalue_j}\bigg)^2
    \bigg(\sum_{j > d} \eigenvalue_j \basis_j^2(X_i)\bigg)^2\bigg] \\
  & \le \numobs^2
    \E\bigg[\hnorm{\E[\esterr \mid X]}^4 \bigg(\sum_{j > d}
      \eigenvalue_j \basis_j^2(X_1)
    \bigg)^2\bigg],
\end{align*}
since the $X_i$ are i.i.d.  Using the fact that $\hnorm{\E[\esterr \mid X]}
\le \hnorm{\fopt}$, we expand the second square to find
\begin{equation*}
  \frac{1}{\numobs^2}\E[\ltwo{v}^4]
  \le \hnorm{\fopt}^4
  \sum_{j, k > d} \E\left[\eigenvalue_j \eigenvalue_k \basis_j^2(X_1)
    \basis_k^2(X_1)\right]
  \le \hnorm{\fopt}^4 \momentbound^4 \sum_{j, k > d} \eigenvalue_j \eigenvalue_k
  = \hnorm{\fopt}^4 \momentbound^4 \bigg(\sum_{j > d} \eigenvalue_j\bigg)^2.
\end{equation*}
Combining our bounds on $\ltwo{\basismat_\ell}$ and $\ltwo{v}$ with our initial
bound~\eqref{eqn:eigenmat-cauchy},
we obtain the inequality
\begin{equation*}
  \E\left[\ltwo{\eigmat^{1/2} \basismat^T v}^2\right]
  \le \sum_{l=1}^d \eigenvalue_\ell \sqrt{\numobs^2 \momentbound^4} \sqrt{
    \numobs^2 \hnorm{\fopt}^4
    \momentbound^4 \bigg(\sum_{j > d} \eigenvalue_j\bigg)^2}
  = \numobs^2\momentbound^4 \hnorm{\fopt}^2
  \bigg(\sum_{j > d} \eigenvalue_j\bigg) \sum_{l = 1}^d \eigenvalue_\ell.
\end{equation*}
Dividing by $\numobs^2$, recalling the definition of $\tailsum = \sum_{j >
  d} \eigenvalue_j$, and noting that $\tr(K) \ge \sum_{l = 1}^d \eigenvalue_\ell$ shows
that
\begin{equation*}
  \E\left[\ltwo{\frac{1}{\numobs}\eigmat^{1/2} \basismat^T v}^2\right]
  \le \momentbound^4 \hnorm{\fopt}^2 \tailsum \tr(K).
\end{equation*}
Combining this inequality with our expansion~\eqref{eqn:magic-product}
and the bound~\eqref{eqn:operator-inv-eigs} yields the
claim~\eqref{claim:detruncated-err-small}.


\paragraph{Proof of bound~\eqref{claim:bias-probability-small}:}

Let us consider the expectation of the norm of the matrix
\mbox{$Q^{-1}((1/n) \basismat^T \basismat - I)Q^{-1}$.}  For each $i \in
     [\numobs]$, let $\pi_i = (\basis_1(x_i), \ldots, \basis_d(x_i))
     \in \R^d$ denote the $i$th row of the matrix $\basismat \in \R^{n
       \times d}$. Then we know that
\begin{equation*}
  Q^{-1}\left(\frac{1}{\numobs} \basismat^T \basismat
  - I\right)Q^{-1}
  = \frac{1}{\numobs}  \sum_{i=1}^\numobs Q^{-1} (\pi_i \pi_i^T - I)Q^{-1}.
\end{equation*}
Define the sequence of matrices
\begin{equation*}
  A_i \defeq Q^{-1} (\pi_i \pi_i^T - I) Q^{-1}
\end{equation*}
Then the matrices $A_i = A_i^T \in \R^{d \times d}$.
Note that $\E[A_i] = 0$ and let $\varepsilon_i$ be i.i.d.\ $\{-1,
1\}$-valued Rademacher random variables. Applying a standard
symmetrization argument~\citep{Ledoux1991}, we find that for any $k
\ge 1$, we have
\begin{equation}
  \label{eqn:symmetrization}
  \E\left[\matrixnorm{Q^{-1}\left(\frac{1}{n} \basismat^T \basismat -
      I\right)Q^{-1}}^k\right] = \E\left[\matrixnorm{\frac{1}{n}
      \sum_{i=1}^n A_i}^k\right] \le 2^k
  \E\left[\matrixnorm{\frac{1}{n} \sum_{i=1}^n \varepsilon_i A_i}^k
    \right].
\end{equation}

\begin{lemma}
  \label{LemMyMatrixMoment}
  The quantity $\E\left[\matrixnorm{\frac{1}{\numobs} \sum_{i=1}^\numobs
      \varepsilon_i A_i}^k\right]^{1/k}$ is upper bounded by
  \begin{align}
    \sqrt{e (k \vee 2 \log(d))} \frac{\momentbound^2 \sum_{j=1}^d
      \frac{1}{1 + \lambda / \eigenvalue_j}}{ \sqrt{\numobs}} +
    \frac{4 e (k \vee 2 \log(d))}{\numobs^{1 - 1/k}} \bigg(\sum_{j=1}^d
    \frac{\momentbound^2}{1 + \lambda / \eigenvalue_j}\bigg).
    \label{eqn:consequence-matrix-moments}
  \end{align}
\end{lemma}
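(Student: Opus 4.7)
The plan is to control $\frac{1}{\numobs}\sum_i \varepsilon_i A_i$ by invoking a matrix Khintchine (or noncommutative Rosenthal) inequality for Rademacher sums of self-adjoint matrices in $\R^{d \times d}$. Applied to the self-adjoint matrices $A_i$, such an inequality takes the form
\begin{equation*}
 \Big(\E\Big\| \sum_{i=1}^\numobs \varepsilon_i A_i \Big\|^k\Big)^{1/k} \leq c_1 \sqrt{k \vee \log d}\, \Big(\E \Big\|\sum_i A_i^2\Big\|^{k/2}\Big)^{1/k} + c_2 (k \vee \log d) \, \Big(\E \max_i \|A_i\|^k\Big)^{1/k}.
\end{equation*}
Dividing by $\numobs$, the target bound~\eqref{eqn:consequence-matrix-moments} follows once the ``variance proxy'' $\|\sum_i A_i^2\|^{1/2}$ is shown to be of order $\sqrt{\numobs}\, \momentbound^2 \fulltracelambda$ in $L^k$ and the ``maximum'' $\max_i \|A_i\|$ is of order $\numobs^{1/k}\, \momentbound^2 \fulltracelambda$ in $L^k$; these two contributions will account for the first and second summands of~\eqref{eqn:consequence-matrix-moments} respectively.

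To produce those two estimates, I would first rewrite $A_i = \xi_i \xi_i^T - B$, where $\xi_i \defeq Q^{-1}\pi_i$ and $B \defeq Q^{-2}$ is the diagonal matrix with entries $(1+\lambda/\eigenvalue_j)^{-1}$. Since $\{\basis_j\}$ is orthonormal in $L^2(\P)$, we have $\E[\pi_i \pi_i^T] = I$ and therefore $\E[\xi_i \xi_i^T] = B$, confirming $\E A_i = 0$; moreover $\|B\| \leq 1$ and $\tr(B) = \fulltracelambda$. The key scalar estimate is obtained by applying Minkowski's inequality to the nonnegative sum $\|\xi_i\|_2^2 = \sum_{j=1}^d \basis_j(x_i)^2 / (1 + \lambda/\eigenvalue_j)$, together with Assumption~\ref{assumption:kernel}:
\begin{equation*}
(\E \|\xi_i\|_2^{2k})^{1/k} \leq \sum_{j=1}^d \frac{(\E\basis_j^{2k}(x_i))^{1/k}}{1 + \lambda/\eigenvalue_j} \leq \momentbound^2 \fulltracelambda.
\end{equation*}
This immediately yields $(\E \|A_i\|^k)^{1/k} \leq 2\momentbound^2\fulltracelambda$, so that a union bound gives $(\E\max_i \|A_i\|^k)^{1/k} \leq 2\numobs^{1/k}\, \momentbound^2 \fulltracelambda$. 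For the variance proxy, the operator inequality $A_i^2 \preceq 2\|\xi_i\|_2^2\, \xi_i \xi_i^T + 2 B^2$ gives $\|\sum_i A_i^2\| \leq 2\sum_i \|\xi_i\|_2^4 + 2\numobs$, and the moment bound above yields $\E\|\sum_i A_i^2\|^{k/2} \lesssim (\numobs\, \momentbound^4 \fulltracelambda^2)^{k/2}$.

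The principal obstacle is matching the explicit constants $\sqrt{e}$ and $4e$ in~\eqref{eqn:consequence-matrix-moments}: different versions of the matrix Khintchine / Rosenthal inequality come with slightly different multiplicative constants and $k$-versus-$\log d$ dependencies, so one must be careful about which version is cited, likely a ``symmetric Rademacher'' specialization in which the variance proxy is held fixed conditionally on $\{A_i\}$ and the outer expectation is taken in $L^k$. A secondary subtlety is that the bound $(\E\|\xi_i\|_2^{2k})^{1/k} \leq \momentbound^2 \fulltracelambda$ only consumes the $2k$th-moment budget of Assumption~\ref{assumption:kernel}, which is precisely what we are given; Minkowski's inequality on a nonnegative scalar sum is what allows us to stay within this budget and is the one place the orthonormality of $\{\basis_j\}$ in $L^2(\P)$ and the single moment assumption interact crucially.
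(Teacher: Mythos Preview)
Your plan is correct and matches the paper's approach closely. The paper invokes a specific matrix moment inequality due to Chen et al.\ (stated as Lemma~\ref{lemma:matrix-moment}) applied directly to the symmetrically distributed matrices $X_i=\varepsilon_i A_i/\numobs$; that lemma carries a \emph{deterministic} variance proxy $\|\sum_i\E[X_i^2]\|^{1/2}$ and the precise constants $\sqrt{e}$ and $2e$, which is what resolves your ``principal obstacle.'' With that form in hand the paper does exactly what you outline: for the variance term it uses $\E[A_i^2]\preceq \E[(Q^{-1}\pi_i\pi_i^T Q^{-1})^2]$ and then bounds $\E\|Q^{-1}\pi_i\pi_i^T Q^{-1}\|^2$ via a weighted-Jensen argument that is equivalent to your Minkowski step $(\E\|\xi_i\|_2^{2k})^{1/k}\le \momentbound^2\fulltracelambda$; for the maximum term it uses the same union bound $\E\max_i\|A_i\|^k\le \sum_i\E\|A_i\|^k$ and the same moment estimate, picking up the extra factor of $2$ that turns $2e$ into $4e$. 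The only substantive difference from your write-up is that the deterministic proxy obviates the need to bound $\|\sum_i A_i^2\|$ in $L^{k/2}$, slightly streamlining the variance computation.
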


We take this lemma as given for the moment, returning to prove it shortly.
Recall the definition of the constant $\fulltracelambda = \sum_{j=1}^\infty 1
/ (1 + \lambda / \eigenvalue_j) \geq \sum_{j=1}^d 1 / (1 + \lambda /
\eigenvalue_j)$. Then using our symmetrization
inequality~\eqref{eqn:symmetrization}, we have
\begin{align}
  \lefteqn{\E\bigg[\matrixnorm{Q^{-1}\left(\frac{1}{n} \basismat^T \basismat
        - I\right)Q^{-1}}^k\bigg]} \nonumber\\
  & \le 2^k \left(\sqrt{e(k \vee \log(d))}
  \frac{\momentbound^2 \fulltracelambda}{
    \sqrt{\numobs}} +
  \frac{4 e (k \vee 2 \log(d))}{\numobs^{1 - 1/k}}
  \momentbound^2 \fulltracelambda
  \right)^k \nonumber\\
  & \le
  \max\left\{\sqrt{k \vee \log(d)},
  \frac{k \vee \log(d)}{\numobs^{1/2 - 1/k}}\right\}^k
  \left(\frac{C \momentbound^2 \fulltracelambda}{\sqrt{\numobs}}\right)^k, 
  \label{eqn:matrix-moment-bound}
\end{align}
where $C$ is a numerical constant.
By definition of the event $\event$, we see by Markov's inequality
that for any $k \in \R, k \ge 1$,
\begin{equation*}
  \P(\event^c)
  \le \frac{\E\left[
      \matrixnorm{Q^{-1}\left(\frac{1}{\numobs} \basismat^T \basismat - I
        \right)}^k\right]}{2^{-k}}
  \le \max\left\{\sqrt{k \vee \log(d)},
  \frac{k \vee \log(d)}{\numobs^{1/2 - 1/k}}\right\}^k
  \left(\frac{2C \momentbound^2 \fulltracelambda)}{\sqrt{\numobs}}\right)^k.
\end{equation*}
This completes the proof of the
bound~\eqref{claim:bias-probability-small}. \\

\vspace*{.2cm}

It remains to prove Lemma~\ref{LemMyMatrixMoment}, for which we make
use of the following result, due to~\citet[Theorem A.1(2)]{Chen2012}.
\begin{lemma}
  \label{lemma:matrix-moment}
  Let $X_i \in \R^{d \times d}$ be independent symmetrically distributed
  Hermitian matrices. Then
  \begin{align}
    \label{EqnTropp}
    \E \bigg [ \matrixnormbigg{\sum_{i=1}^n X_i}^k\bigg]^{1/k} \le \sqrt{e
      (k \vee 2 \log d)} \matrixnormbigg{\sum_{i=1}^n \E[X_i^2]}^{1/2} +
    2e (k \vee 2\log d) \left(\E[\max_i \matrixnorm{X_i}^k]\right)^{1/k}.
  \end{align}
\end{lemma}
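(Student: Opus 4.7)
The plan is to prove this matrix Rosenthal--type moment inequality via the matrix Laplace transform method (the Ahlswede--Winter/Tropp framework), combined with a truncation argument. The two terms on the right-hand side of~\eqref{EqnTropp} reflect a bias--variance split: the first (variance) term arises from the quadratic behavior of the matrix cumulant generating function near the origin, while the second (maximum) term arises from truncating large deviations of individual summands.

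First I would reduce the operator-norm moment to a trace moment. Setting $p = \lceil k/2 \rceil$, the elementary inequality $\matrixnorm{S}^{2p} \le \tr(S^{2p})$ applied to the Hermitian sum $S = \sum_i X_i$, combined with Jensen's inequality, yields $\E[\matrixnorm{S}^k]^{1/k} \le \E[\tr(S^{2p})]^{1/(2p)}$. This reduces the task to bounding an even trace moment, which is the natural object for the matrix Laplace method. Second, I would exploit the integral identity $\tr(S^{2p}) = c_p \int_0^\infty t^{2p-1} \P(\matrixnorm{S} \ge t)\,dt$ and bound the tail via Markov applied to $\tr \exp(\theta S)$: Lieb's concavity theorem gives the subadditivity bound
\begin{equation*}
  \log \E[\tr \exp(\theta S)] \;\le\; \log d + \log \tr \exp\!\Big(\sum_i \log \E[\exp(\theta X_i)]\Big),
\end{equation*}
so the problem reduces to controlling each matrix CGF $\E[\exp(\theta X_i)]$ in the PSD order.

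Third, I would truncate each summand at a level $R$ to be chosen. Writing $X_i = X_i \indic{\matrixnorm{X_i} \le R} + X_i \indic{\matrixnorm{X_i} > R}$, the symmetry hypothesis $X_i \stackrel{d}{=} -X_i$ kills the linear term in the Taylor expansion of $\exp(\theta X_i \indic{\matrixnorm{X_i} \le R})$, leaving the PSD bound
\begin{equation*}
  \E[\exp(\theta X_i \indic{\matrixnorm{X_i} \le R})] \;\preceq\; I + \theta^2 g(\theta R)\,\E[X_i^2], \qquad g(u) = (e^u - 1 - u)/u^2.
\end{equation*}
Combined with the subadditivity bound and taking logarithms, the bounded part contributes $\theta^2 g(\theta R) \matrixnorm{\sum_i \E[X_i^2]}$ to the exponent. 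Inverting the Laplace transform with the choice $\theta \asymp \sqrt{(k \vee \log d)/V}$ where $V = \matrixnorm{\sum_i \E[X_i^2]}$ recovers the first term $\sqrt{e(k \vee 2\log d)}\,V^{1/2}$. The unbounded part is handled by a direct union-bound estimate: $\E[(\sum_i \matrixnorm{X_i}\indic{\matrixnorm{X_i} > R})^k]^{1/k}$ is dominated (after symmetrization and using that $R$ is large) by $(k \vee \log d)\,\E[\max_i \matrixnorm{X_i}^k]^{1/k}$, producing the second term.

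The main obstacle is orchestrating the truncation level $R$ with the Laplace parameter $\theta$ so that the resulting bound cleanly separates into the two advertised terms without spurious logarithmic or multiplicative factors, and with the explicit constants $\sqrt{e}$ and $2e$. The natural balance is $R \asymp (k \vee \log d)/\theta$, so that $\theta R \asymp 1$ and the function $g(\theta R)$ is a harmless constant; this simultaneously ensures the truncation tail is controlled by $\E[\max_i \matrixnorm{X_i}^k]$ without amplification. Since this lemma is attributed to Chen et al.~\cite{Chen2012}, I would follow their formalization of this symmetrization-plus-truncation strategy; alternative routes via the noncommutative Khintchine inequality of Lust-Piquard--Pisier could yield comparable bounds, but typically do not produce the explicit $k \vee \log d$ scaling with such transparent constants.
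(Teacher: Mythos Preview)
The paper does not prove this lemma. It is stated as a quotation of an external result---Theorem~A.1(2) of Chen, Gittens and Tropp~\cite{Chen2012}---and used as a black box to derive Lemma~\ref{LemMyMatrixMoment}. So there is no ``paper's own proof'' against which to compare your attempt; the correct deliverable here is simply a citation, not an argument.

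That said, your sketch is a reasonable heuristic for why such a bound should hold, but it is not the route Chen et al.\ actually take, and it has a soft spot. Their proof proceeds by a direct Schatten-norm moment recursion (essentially a matrix Rosenthal inequality derived from the matrix Khintchine framework), not by inverting a Laplace-transform tail bound. Your Laplace-plus-truncation outline would, if carried through, most naturally produce a Bernstein-type tail inequality or a moment bound with a term of the form $\big(\sum_i \E[\matrixnorm{X_i}^k]\big)^{1/k}$; obtaining the sharper $\big(\E[\max_i \matrixnorm{X_i}^k]\big)^{1/k}$ with the advertised constant $2e$ requires an additional decoupling/Hoffmann--J{\o}rgensen step that you have not supplied, and your treatment of the ``unbounded part'' (a one-line union-bound claim) is where that gap sits. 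If you want to reconstruct the proof rather than cite it, follow the Schatten-norm recursion in Chen et al.\ directly; otherwise, a citation is exactly what the paper does and is appropriate here.
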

\noindent The proof of Lemma~\ref{LemMyMatrixMoment} is based on
applying this inequality with $X_i = \varepsilon_i A_i/\numobs$, and then
bounding the two terms on the right-hand side of
inequality~\eqref{EqnTropp}.

We begin with the first term. Note that for any symmetric matrix $Z$, we have
the matrix inequalities $0 \preceq \E[(Z - \E[Z])^2] = \E[Z^2] - \E[Z]^2
\preceq \E[Z^2]$, so
\begin{equation*}
  \E[A_i^2] = \E[Q^{-1}(\pi_i \pi_i^T - I) Q^{-2}(\pi_i \pi_i^T - I) Q^{-1}]
  \preceq \E[Q^{-1}\pi_i \pi_i^T Q^{-2} \pi_i\pi_i^T Q^{-1}].
\end{equation*}
Instead of computing these moments directly, we provide bounds on their norms.
Since $\pi_i \pi_i^T$ is rank one and $Q$ is diagonal, we have
\begin{equation*}
  \matrixnorm{Q^{-1}\pi_i \pi_i^TQ^{-1}}
  = \pi_i^T(I + \lambda \eigmat^{-1})^{-1} \pi_i
  = \sum_{j=1}^d \frac{\basis_j(x_i)^2}{1 + \lambda / \eigenvalue_j}.
\end{equation*}
We also note that, for any $k \in \R, k \ge 1$, convexity implies that
\begin{align*}
  \bigg(\sum_{j=1}^d \frac{\basis_j(x_i)^2}{1 + \lambda / \eigenvalue_j}
  \bigg)^k
  & = \left(\frac{\sum_{l=1}^d 1 / (1 + \lambda / \eigenvalue_\ell)}{
    \sum_{l=1}^d 1 / (1 + \lambda / \eigenvalue_\ell)}
  \sum_{j=1}^d \frac{\basis_j(x_i)^2}{1 + \lambda / \eigenvalue_j}
  \right)^k \\
  & \le \bigg(\sum_{l=1}^d \frac{1}{1 + \lambda / \eigenvalue_\ell}\bigg)^k
  \frac{1}{\sum_{l=1}^d 1 / (1 + \lambda / \eigenvalue_\ell)}
  \sum_{j=1}^d \frac{\basis_j(x_i)^{2k}}{1 + \lambda / \eigenvalue_j},
\end{align*}
so if $\E[\basis_j(X_i)^{2\moment}] \le \momentbound^{2\moment}$, we obtain
\begin{equation}
  \E\bigg[\bigg(\sum_{j=1}^d \frac{\basis_j(x_i)^2}{
      1 + \lambda / \eigenvalue_j}\bigg)^k\bigg]
  \le \bigg(\sum_{j=1}^d \frac{1}{1 + \lambda / \eigenvalue_j}\bigg)^k
  \momentbound^{2k}.
  \label{eqn:power-basis-moment-bound}
\end{equation}

The sub-multiplicativity of the matrix norm implies $\matrixnorm{(Q^{-1}\pi_i
  \pi_i^TQ^{-1})^2} \le \matrixnorm{Q^{-1}\pi_i \pi_i^TQ^{-1}}^2$, and
consequently we have
\begin{align*}
  \E\left[\matrixnorm{(Q^{-1}\pi_i \pi_i^TQ^{-1})^2}\right] & \leq
  \E\left[\left( \pi_i^T(I + \lambda \eigmat^{-1})^{-1}
    \pi_i\right)^2\right]
  %
  \le \momentbound^4
  \bigg(\sum_{j=1}^d \frac{1}{1 + \lambda / \eigenvalue_j} \bigg)^2,
\end{align*}
where the final step follows from
inequality~\eqref{eqn:power-basis-moment-bound}.  Applying the triangle
inequality to the first term on the right-hand side of
Lemma~\ref{lemma:matrix-moment}, we have thus obtained the first term on the
right-hand side of expression~\eqref{eqn:consequence-matrix-moments}. \\

We now turn to the second term in
expression~\eqref{eqn:consequence-matrix-moments}.  For real $k \ge 1$, we
have
\begin{equation*}
  \E[\max_i \matrixnorm{\varepsilon_i A_i / \numobs}^k]
  = \frac{1}{\numobs^k}\E[\max_i \matrixnorm{A_i}^k]
  \le \frac{1}{\numobs^k} \sum_{i=1}^n \E[\matrixnorm{A_i}^k]
\end{equation*}
Since norms are sub-additive, we find that
\begin{equation*}
  \matrixnorm{A_i}^k
  \le 2^{k-1}
  \bigg(\sum_{j=1}^d \frac{\basis_j(x_i)^2}{1 + \lambda / \eigenvalue_j}
  \bigg)^k
  + 2^{k-1} \matrixnorm{Q^{-2}}^k
  = 2^{k-1}
  \bigg(\sum_{j=1}^d \frac{\basis_j(x_i)^2}{1 + \lambda / \eigenvalue_j}
  \bigg)^k
  + 2^{k-1} \bigg(\frac{1}{1 + \lambda/\eigenvalue_1}\bigg)^k.
\end{equation*}
Since $\momentbound \ge 1$ (recall that the $\basis_j$ are an orthonormal
basis), we apply
inequality~\eqref{eqn:power-basis-moment-bound}, to find
that
\begin{equation*}
  \E[\max_i \matrixnorm{\varepsilon_i A_i / \numobs}^k]
  \le \frac{1}{n^{k - 1}}\bigg[
    2^{k-1} \bigg(\sum_{j=1}^d \frac{1}{1 + \lambda / \eigenvalue_j}\bigg)^k
    \momentbound^{2k}
    + 2^{k-1} \bigg(\frac{1}{1 + \lambda / \eigenvalue_1}\bigg)^k
    \momentbound^{2k}\bigg].
\end{equation*}
Taking $k$th roots yields the second term in the
expression~\eqref{eqn:consequence-matrix-moments}.


\section{Proof of Lemma~\ref{lemma:variance-bound}}
\label{sec:proof-variance}

This proof follows an outline similar to
Lemma~\ref{lemma:norm-expectation-moments}.  We begin with a simple
bound on $\hnorm{\esterr}$:
\begin{lemma}
\label{lemma:diff-bound}
Under Assumption~\ref{assu:function-space}, we have
$\E[\hnorm{\esterr}^2 \mid X] \le 2 \stddev^2 / \lambda + 4
\hnorm{\fopt}^2$.
\end{lemma}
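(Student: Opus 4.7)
The plan is to exploit the defining optimality of $\fapprox$ as a regularized empirical risk minimizer on the subsample and then pass to $\esterr = \fapprox - \fopt$ via a triangle inequality. Since $\fapprox$ minimizes the empirical objective~\eqref{eqn:local-empirical-objective}, comparing its value to that of the true regression function $\fopt$ gives the basic inequality
\begin{equation*}
    \frac{1}{\numobs} \sum_{i=1}^\numobs (\fapprox(x_i) - y_i)^2 + \lambda \hnorm{\fapprox}^2 \le \frac{1}{\numobs} \sum_{i=1}^\numobs (\fopt(x_i) - y_i)^2 + \lambda \hnorm{\fopt}^2.
\end{equation*}

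Substituting $y_i = \fopt(x_i) + \noise_i$ collapses the right-hand side to $\frac{1}{\numobs} \sum_{i=1}^\numobs \noise_i^2 + \lambda \hnorm{\fopt}^2$, and dropping the nonnegative empirical squared-error term on the left yields $\lambda \hnorm{\fapprox}^2 \le \frac{1}{\numobs} \sum_{i=1}^\numobs \noise_i^2 + \lambda \hnorm{\fopt}^2$. Taking conditional expectation given $X = \{x_i\}_{i=1}^\numobs$ and using Assumption~\ref{assu:function-space} to bound $\E[\noise_i^2 \mid x_i] \le \stddev^2$ gives
\begin{equation*}
    \E\big[\hnorm{\fapprox}^2 \,\big|\, X\big] \le \frac{\stddev^2}{\lambda} + \hnorm{\fopt}^2.
\end{equation*}

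Finally, applying the elementary inequality $\hnorm{a-b}^2 \le 2\hnorm{a}^2 + 2\hnorm{b}^2$ with $a = \fapprox$ and $b = \fopt$, and then taking conditional expectation, produces
\begin{equation*}
    \E\big[\hnorm{\esterr}^2 \,\big|\, X\big] \le 2 \, \E\big[\hnorm{\fapprox}^2 \,\big|\, X\big] + 2\hnorm{\fopt}^2 \le \frac{2\stddev^2}{\lambda} + 4\hnorm{\fopt}^2,
\end{equation*}
which is the claim. There is no real obstacle here: the entire argument is a one-line basic inequality from regularized ERM, combined with a triangle inequality. The only thing to be careful about is conditioning on $X$ so that the bound $\E[\noise_i^2 \mid x_i] \le \stddev^2$ from Assumption~\ref{assu:function-space} can be applied directly.
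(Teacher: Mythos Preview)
Your proof is correct and follows essentially the same approach as the paper: both use the optimality of $\fapprox$ to compare against $\fopt$ in the empirical objective, drop the nonnegative squared-error term, bound the conditional noise variance by $\stddev^2$ via Assumption~\ref{assu:function-space}, and finish with the elementary inequality $\hnorm{a-b}^2 \le 2\hnorm{a}^2 + 2\hnorm{b}^2$. The only cosmetic difference is that the paper takes the conditional expectation before invoking optimality, whereas you write the basic inequality pointwise first and then condition; the two orderings are equivalent.
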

\begin{proof}
  We have
  \begin{align*}
    \lambda \; \E[ \, \hnorms{\fapprox}^2 \mid \Data] & \leq
    \E\left[\frac{1}{\nsample}\sum_{i=1}^\nsample( \funcapprox(x_i) -
      \foptimal(x_i) - \noise_i)^2 + \lambda\hnorms{\fapprox}^2 \mid \Data
      \right] \\
    & \stackrel{(i)}{\leq} \frac{1}{\nsample}\sum_{i=1}^\nsample
    \E[\noise_i^2 \mid x_i] + \lambda\hnorm{\foptimal}^2 \\
    & \stackrel{(ii)}{\leq} \sigma^2 + \lambda\hnorm{\fopt}^2,
  \end{align*}
  where inequality (i) follows since $\fapprox$ minimizes the objective
  function~\eqref{eqn:krr-estimate}; and inequality (ii) uses the fact
  that $\E[\noise_i^2 \mid x_i]\leq \sigma^2$.  Applying the triangle
  inequality to $\hnorm{\esterr}$ along with the elementary inequality
  $(a + b)^2 \le 2 a^2 + 2b^2$, we find that
  \begin{align*}
    \E[\hnorm{\esterr}^2 \mid \Data] & \leq 2\hnorm{\fopt}^2 +
    2\E[\hnorms{\fapprox}^2 \mid \Data] \; \leq \; \frac{2
      \stddev^2}{\lambda} + 4 \hnorm{\fopt}^2,
  \end{align*}
  which completes the proof.
\end{proof}

With Lemma~\ref{lemma:diff-bound} in place, we now proceed to the
proof of the theorem proper. Recall from
Lemma~\ref{lemma:norm-expectation-moments} the optimality condition
\begin{equation}
  \label{eqn:gradient-optimality-v2}
  \frac{1}{\numobs} \sum_{i=1}^\numobs \eval_{x_i}
  (\langle \eval_{x_i}, \fapprox - \fopt\rangle - \noise_i)
  + \lambda \fapprox = 0.
\end{equation}
Now, let $\coorderr \in \ell_2(\N)$ be the expansion of the error
$\esterr$ in the basis $\{\basis_j\}$, so that $\esterr =
\sum_{j=1}^\infty \coorderr_j \basis_j$, and (again, as in
Lemma~\ref{lemma:norm-expectation-moments}), we choose $d \in \N$ and
truncate $\esterr$ via
\begin{equation*}
  \truncate{\esterr} \defeq
  \sum_{j = 1}^d \coorderr_j \basis_j
  ~~ \mbox{and} ~~
  \detruncate{\esterr}
  \defeq \esterr - \truncate{\esterr}
  = \sum_{j > d} \coorderr_j \basis_j.
\end{equation*}
Let $\truncate{\coorderr} \in \R^d$ and $\detruncate{\coorderr}$ denote
the corresponding vectors for the above.
As a consequence of the orthonormality of the basis functions, we have
\begin{equation}
  \label{eqn:truncated-decomposition-v2}
  \E[\ltwo{\esterr}^2]
  = \E[\ltwos{\truncate{\esterr}}^2]
  + \E[\ltwos{\detruncate{\esterr}}^2]
  = \E[\ltwos{\truncate{\coorderr}}^2]
  + \E[\ltwos{\detruncate{\coorderr}}^2].
\end{equation}
We bound each of the terms~\eqref{eqn:truncated-decomposition-v2} in
turn.

By Lemma~\ref{lemma:diff-bound}, the second term is upper bounded as
\begin{equation}
  \label{eqn:detruncated-part-is-small-with-noise}
  \E[\ltwos{\detruncate{\esterr}}^2]
  = \sum_{j > d} \E[\coorderr_j^2]
  \le \sum_{j > d} \frac{\eigenvalue_{d + 1}}{\eigenvalue_j}
  \E[\coorderr_j^2]
  = \eigenvalue_{d + 1} \E[\hnorms{\detruncate{\esterr}}^2]
  \le \eigenvalue_{d + 1}\left(\frac{2 \stddev^2}{\lambda}
  + 4 \hnorm{\fopt}^2\right).
\end{equation}

The remainder of the proof is devoted the bounding the term
$\E[\ltwos{\truncate{\esterr}}^2]$ in the
decomposition~\eqref{eqn:truncated-decomposition-v2}.  By taking the
Hilbert inner product of $\basis_k$ with the optimality
condition~\eqref{eqn:gradient-optimality-v2}, we find as in our
derivation of the matrix
equation~\eqref{eqn:truncated-opt-gradient-equation} that for each $k
\in \{1, \ldots, d\}$
\begin{equation*}
  \frac{1}{\nsample} \sum_{i=1}^\nsample \sum_{j=1}^d
  \basis_k(x_i)\basis_j(x_i) \coorderr_j
  + \frac{1}{\nsample}\sum_{i=1}^\nsample
  \basis_k(x_i) (\detruncate{\esterr}(x_i)
  -\noise_i) + \lambda\frac{\coorderr_k}{\eigenvalue_k} = 0.
\end{equation*}
Given the expansion $\fopt = \sum_{j = 1}^\infty \theta_j \basis_j$,
define the tail error vector $v \in \R^\numobs$ by $v_i = \sum_{j > d}
\coorderr_j \basis_j(x_i)$, and recall the definition of the
eigenvalue matrix $\eigmat = \diag(\eigenvalue_1, \ldots,
\eigenvalue_d) \in \R^{d \times d}$.  Given the matrix $\basismat$
defined by its coordinates $\basismat_{ij} = \basis_j(x_i)$, we have
\begin{equation}
  \label{eqn:truncated-gradient-v2}
  \left(\frac{1}{\numobs} \basismat^T \basismat + \lambda \eigmat^{-1}
  \right) \truncate{\coorderr}
  = - \lambda \eigmat^{-1} \truncate{\theta} - \frac{1}{\numobs}
  \basismat^T v + \frac{1}{\numobs} \basismat^T \noise.
\end{equation}
As in the proof of Lemma~\ref{lemma:norm-expectation-moments}, we find
that
\begin{equation}
  \label{eqn:inverted-truncated-gradient-noise}
  \left(I + Q^{-1}\left(\frac{1}{\numobs} \basismat^T \basismat
  - I\right)Q^{-1}\right) Q\truncate{\coorderr}
  = -\lambda Q^{-1} \eigmat^{-1} \truncate{\theta}
  - \frac{1}{\numobs} Q^{-1} \basismat^T v
  + \frac{1}{\numobs} Q^{-1} \basismat^T \noise,
\end{equation}
where we recall that $Q = (I + \lambda \eigmat^{-1})^{1/2}$.

We now recall the bounds~\eqref{claim:truncated-fopt-small}
and~\eqref{claim:bias-probability-small} from Lemma~\ref{LemBigLemma},
as well as the previously defined event \mbox{$\event \defeq
  \{\matrixnorm{Q^{-1}\left(\frac{1}{\numobs} \basismat^T \basismat
  - I\right)Q^{-1}} \le
  1/2 \}$.} When $\event$ occurs, the
expression~\eqref{eqn:inverted-truncated-gradient-noise} implies the
inequality
\begin{equation*}
  \ltwos{\truncate{\esterr}}^2 \le \ltwos{Q \truncate{\coorderr}}^2 \leq 4 \ltwo{-\lambda
    Q^{-1}\eigmat^{-1}\truncate{\theta} - (1 / \nsample)
    Q^{-1}\basismat^T v + (1 / \nsample) Q^{-1}\basismat^T \noise}^2.
\end{equation*}
When $\event$ fails to hold, Lemma~\ref{lemma:diff-bound} may still be
applied since $\event$ is measureable with respect to $\Data$.  Doing
so yields
\begin{align}
\lefteqn{\E[\ltwos{\truncate{\esterr}}^2] = \E[\indic{\event}
    \ltwos{\truncate{\esterr}}^2] + \E[\indic{\event^c}
    \ltwos{\truncate{\esterr}}^2]} \nonumber \\
& \leq 4 \E\left[\ltwo{-\lambda Q^{-1}\eigmat^{-1}\truncate{\theta} -
    (1 / \nsample) Q^{-1}\basismat^T v + (1 / \nsample)
    Q^{-1}\basismat^T \noise}^2\right] + \E\left[\indic{\event^c}
  \E[\ltwos{\truncate{\esterr}}^2 \mid \Data] \right] \nonumber \\ &
\leq 4 \E\left[\ltwo{\lambda Q^{-1}\eigmat^{-1}\truncate{\theta} +
    \frac{1}{\nsample} Q^{-1}\basismat^T v - \frac{1}{\nsample}
    Q^{-1}\basismat^T \noise}^2\right] + \P(\event^c)
\left(\frac{2\stddev^2}{\lambda} + 4 \hnorm{\fopt}^2\right).
  \label{eqn:bounding-truncation-with-noise}
\end{align}
Since the bound~\eqref{claim:bias-probability-small} still holds, it
remains to provide a bound on the first term in the
expression~\eqref{eqn:bounding-truncation-with-noise}.

As in the proof of Lemma~\ref{lemma:norm-expectation-moments}, we have
$\ltwos{\lambda Q^{-1} \eigmat^{-1} \truncate{\theta}}^2 \le \lambda \hnorm{\fopt}^2$
via the bound~\eqref{claim:truncated-fopt-small}.
Turning to the second term inside the norm, we claim that, under the
conditions of Lemma~\ref{lemma:variance-bound}, the following bound
holds:
\begin{align}
\label{claim:detruncated-err-small-noise}
 \E \left [\ltwo{(1 / \numobs) Q^{-1} \basismat^T v}^2\right] & \leq
 \frac{\momentbound^4 \tr(K) \tailsum (2 \stddev^2 / \lambda + 4
   \hnorm{\fopt}^2)}{\lambda}.
\end{align}
This claim is an analogue of our earlier
bound~\eqref{claim:detruncated-err-small}, and we prove it shortly.
Lastly, we bound the norm of $Q^{-1} \basismat^T \noise /
\numobs$. Noting that the diagional entries of $Q^{-1}$ are $1 / \sqrt{1 +
\lambda / \eigenvalue_j}$, we have
\begin{equation*}
  \E\left[\ltwo{Q^{-1} \basismat^T \noise}^2\right]
  = \sum_{j = 1}^d \sum_{i = 1}^\numobs
  \frac{1}{1 + \lambda / \eigenvalue_j}
  \E[\basis_j^2 (X_i) \noise_i^2]
\end{equation*}
Since $\E[\basis_j^2(X_i) \noise_i^2]
= \E[\basis_j^2(X_i) \E[\noise_i^2 \mid X_i]] \le \stddev^2$ by assumption,
we have the inequality
\begin{equation*}
  \E\left[\ltwo{(1 / \numobs) Q^{-1} \basismat^T\noise}^2\right]
  \le \frac{\stddev^2}{\numobs} \sum_{j=1}^d
  \frac{1}{1 + \lambda / \eigenvalue_j}.
\end{equation*}
The last sum is bounded by $(\stddev^2 / \numobs)
\fulltracelambda$.  Applying the inequality $(a + b + c)^2 \le
3a^2 + 3b^2 + 3c^2$ to inequality~\eqref{eqn:bounding-truncation-with-noise},
we obtain
\begin{align*}
  \E\left[\ltwos{\truncate{\esterr}}^2\right]
  & \le 12 \lambda \hnorm{\fopt}^2
  + \frac{12 \stddev^2 \fulltracelambda}{\numobs}
  + \left(\frac{2 \stddev^2}{\lambda} + 4 \hnorm{\fopt}^2\right)
  \left(\frac{12 \momentbound^4 \tr(K) \tailsum}{\lambda}
  + \P(\event^c)\right).
\end{align*}
Applying the bound~\eqref{claim:bias-probability-small} to control
$\P(\event^c)$ and bounding $\E[\ltwos{\detruncate{\esterr}}^2]$ using
inequality~\eqref{eqn:detruncated-part-is-small-with-noise} completes
the proof of the lemma. \\

\vspace*{.1in}


It remains to prove
bound~\eqref{claim:detruncated-err-small-noise}. Recalling the
inequality~\eqref{eqn:operator-inv-eigs}, we see that
\begin{equation}
  \label{eqn:err-small-noise-expansion-1}
  \ltwo{(1 / \numobs) Q^{-1} \basismat^T v}^2 \le \matrixnorm{Q^{-1}
    \eigmat^{-1/2}}^2 \ltwo{(1/\numobs) \eigmat^{1/2} \basismat^T v}^2
  \le \frac{1}{\lambda} \ltwo{(1/\numobs) \eigmat^{1/2} \basismat^T
    v}^2.
\end{equation}
Let $\basismat_\ell$ denote the $\ell$th column of the matrix
$\basismat$.  Taking expectations yields
\begin{equation*}
  \E\left[\ltwo{\eigmat^{1/2} \basismat^T v}^2\right]
  = \sum_{l = 1}^d \eigenvalue_\ell \E[\<\basismat_\ell, v\>^2]
  \le \sum_{l = 1}^d \eigenvalue_\ell
  \E\left[\ltwo{\basismat_\ell}^2 \ltwo{v}^2\right]
  = \sum_{l = 1}^d \eigenvalue_\ell
  \E\left[\ltwo{\basismat_\ell}^2
    \E\left[\ltwo{v}^2 \mid X\right]\right].
\end{equation*}
Now consider the inner expectation. Applying the Cauchy-Schwarz
inequality as in the proof of the
bound~\eqref{claim:detruncated-err-small}, we have
\begin{equation*}
  \ltwo{v}^2
  = \sum_{i=1}^n v_i^2
  \le \sum_{i=1}^n \bigg(\sum_{j > d} \frac{\coorderr_j^2}{\eigenvalue_j}
  \bigg) \bigg(\sum_{j > d} \eigenvalue_j \basis_j^2(X_i)\bigg).
\end{equation*}
Notably, the second term is $X$-measureable, and
the first is bounded by $\hnorms{\detruncate{\esterr}}^2
\le \hnorm{\esterr}^2$. We thus obtain
\begin{equation}\label{eqn:err-small-noise-expansion-2}
  \E\left[\ltwo{\eigmat^{1/2} \basismat^T v}^2\right]
  \le \sum_{i=1}^\numobs \sum_{l=1}^d \eigenvalue_\ell
  \E\bigg[\ltwo{\basismat_\ell}^2
    \bigg(\sum_{j > d} \eigenvalue_j \basis_j^2(X_i)\bigg)
    \E[\hnorm{\esterr}^2 \mid X]\bigg].
\end{equation}
Lemma~\ref{lemma:diff-bound} provides the bound $2 \stddev^2 / \lambda + 4
\hnorm{\fopt}^2$ on the final (inner) expectation.

The remainder of the argument proceeds precisely as in the
bound~\eqref{claim:detruncated-err-small}. We have
\begin{equation*}
  \E[\ltwo{\basismat_\ell}^2 \basis_j(X_i)^2]
  \le \numobs \momentbound^4
\end{equation*}
by the moment assumptions on $\basis_j$, and thus
\begin{equation*}
  \E\left[\ltwo{\eigmat^{1/2} \basismat^T v}^2\right]
  \le \sum_{l = 1}^d \sum_{j > d}
  \eigenvalue_\ell \eigenvalue_j \numobs^2 \momentbound^4
  \left(\frac{2 \stddev^2}{\lambda} + 4 \hnorm{\fopt}^2\right)
  \le \numobs^2 \momentbound^4 \tailsum
  \tr(K) \left(\frac{2 \stddev^2}{\lambda} + 4 \hnorm{\fopt}^2\right).
\end{equation*}
Dividing by $\lambda \numobs^2$ completes the proof.


\section{Proof of Lemma~\ref{lemma:oracle-variance-bound}}
\label{appendix:proof-lemma-oracle-variance}

As before, we let $\Data \defeq \{x_1, \ldots, x_\numobs\}$ denote the
collection of design points.  We begin with some useful
bounds on  $\hnorm{\foptimalreg}$ and $\hnorm{\esterr}$.

\begin{lemma}
  \label{lemma:new-pre-bound}
  Under Assumptions~\ref{assumption:kernel}
  and~\ref{assumption:response-moment-bound}, we have
  \begin{align}
    \label{eqn:new-diff-moment-bounds}
    \E\left[(\E[\hnorm{\esterr}^2 \mid \Data])^2\right]
    \leq \difffourth ~~~
    \mbox{and} ~~~ \E[\hnorm{\esterr}^2] \leq \diffsquare,
  \end{align}
  where
  \begin{equation}
    \label{eqn:def-diffsquare}
    \diffone \defeq \sqrt[4]{32\hnorms{\foptimalreg}^4 +
      8\altvarreg^4/{\lambda^2}}.
  \end{equation}
\end{lemma}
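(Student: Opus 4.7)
The plan is to exploit the optimality of $\funcapprox$ on the empirical regularized risk and the definition of $\foptimalreg$ on the population version, together with the conditional variance interpretation of $\varreg^2(x)$. I would first establish the basic triangle-inequality control
\begin{equation*}
\hnorm{\esterr}^2 \leq 2\hnorm{\fapprox}^2 + 2\hnorm{\foptimalreg}^2,
\end{equation*}
so that it suffices to bound moments of $\hnorm{\fapprox}^2$.

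Since $\funcapprox$ minimizes the empirical regularized objective over $\HilbertSpace$, comparing its value at $\funcapprox$ with its value at $\foptimalreg$ and discarding the nonnegative empirical loss term on the left yields
\begin{equation*}
\lambda \hnorm{\fapprox}^2 \leq \frac{1}{\numobs}\sum_{i=1}^\numobs \bigl(\foptimalreg(x_i) - y_i\bigr)^2 + \lambda \hnorms{\foptimalreg}^2 .
\end{equation*}
Taking the conditional expectation given $\Data = \{x_i\}_{i=1}^\numobs$ and using the definition $\varreg^2(x) = \E[(Y-\foptimalreg(x))^2 \mid x]$, I get
\begin{equation*}
\lambda \,\E[\hnorm{\fapprox}^2 \mid \Data] \leq \frac{1}{\numobs}\sum_{i=1}^\numobs \varreg^2(x_i) + \lambda \hnorm{\foptimalreg}^2,
\end{equation*}
and therefore
\begin{equation*}
\E[\hnorm{\esterr}^2 \mid \Data] \leq \frac{2}{\lambda \numobs}\sum_{i=1}^\numobs \varreg^2(x_i) + 4\hnorm{\foptimalreg}^2 .
\end{equation*}

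For the $L^2$ bound, I take a further expectation and use $\E[\varreg^2(X)] \leq \sqrt{\E[\varreg^4(X)]} = \altvarreg^2$ (Jensen), giving $\E[\hnorm{\esterr}^2] \leq 2\altvarreg^2/\lambda + 4\hnorm{\foptimalreg}^2$; the elementary inequality $a+b \leq \sqrt{2(a^2+b^2)}$ then produces $\diffsquare = \sqrt{32\hnorm{\foptimalreg}^4 + 8\altvarreg^4/\lambda^2}$. For the fourth-moment bound, I square the conditional bound, apply $(a+b)^2 \leq 2a^2+2b^2$, and take expectation:
\begin{equation*}
\E\bigl[(\E[\hnorm{\esterr}^2 \mid \Data])^2\bigr] \leq \frac{8}{\lambda^2 \numobs^2}\, \E\!\left[\Bigl(\sum_{i=1}^\numobs \varreg^2(X_i)\Bigr)^{\!2}\right] + 32 \hnorm{\foptimalreg}^4 .
\end{equation*}
Cauchy--Schwarz gives $\bigl(\sum_i \varreg^2(X_i)\bigr)^2 \leq \numobs \sum_i \varreg^4(X_i)$, so the expectation is bounded by $\numobs^2 \altvarreg^4$, delivering exactly $8\altvarreg^4/\lambda^2 + 32\hnorm{\foptimalreg}^4 = \diffone^4$.

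The subtle point, and the only place where care is needed, is the order in which Jensen is applied for the fourth-moment bound: it is essential to take the conditional expectation over $Y$ \emph{first} (which converts $(Y-\foptimalreg(X))^2$ into $\varreg^2(X)$) and only then to square, because Assumption~\ref{assumption:response-moment-bound} provides only $\E[\varreg^4(X)]$ rather than the generally larger quantity $\E[(Y-\foptimalreg(X))^4]$. This is exactly what the statement requires, since the left-hand side is $\E[(\E[\hnorm{\esterr}^2 \mid \Data])^2]$, not $\E[\hnorm{\esterr}^4]$; the outer square acts on a $\Data$-measurable quantity, and no further integration over $Y$ is lost.
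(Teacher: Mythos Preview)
Your proof is correct and follows essentially the same approach as the paper: compare the empirical objective at $\funcapprox$ with its value at $\foptimalreg$, take the conditional expectation to replace the loss by $\varreg^2(x_i)$, apply the triangle inequality to pass to $\esterr$, then square and use $(a+b)^2\le 2a^2+2b^2$ together with convexity/Cauchy--Schwarz on the average. The only cosmetic difference is that the paper obtains the second bound $\E[\hnorm{\esterr}^2]\le\diffsquare$ from the first via Jensen, whereas you derive it directly from the conditional bound; both routes are immediate.
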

\noindent See Section~\ref{sec:proof-new-pre-bound} for the proof of
this claim.

This proof follows an outline similar to that of
Lemma~\ref{lemma:variance-bound}.  As usual, we let $\coorderr \in
\ell_2(\N)$ be the expansion of the error $\esterr$ in the basis
$\{\basis_j\}$, so that $\esterr = \sum_{j=1}^\infty \coorderr_j
\basis_j$, and we choose $d \in \N$ and define the truncated vectors
$\truncate{\esterr} \defeq \sum_{j = 1}^d \coorderr_j \basis_j$ and
$\detruncate{\esterr} \defeq \esterr - \truncate{\esterr} = \sum_{j >
  d} \coorderr_j \basis_j$.  As usual, we have the decomposition
$\E[\ltwo{\esterr}^2] = \E[\ltwos{\truncate{\coorderr}}^2] +
\E[\ltwos{\detruncate{\coorderr}}^2]$. Recall the
definition~\eqref{eqn:def-diffsquare} of the constant $\diffone =
\sqrt[4]{32 \hnorms{\foptimalreg}^4 + 8 \altvarreg^4 / \lambda^2}$.
As in our deduction of
inequalities~\eqref{eqn:detruncated-part-is-small-with-noise},
Lemma~\ref{lemma:new-pre-bound} implies that
\mbox{$\E[\ltwos{\detruncate{\esterr}}^2] \le \eigenvalue_{d + 1}
  \E[\hnorms{\detruncate{\esterr}}^2] \le \eigenvalue_{d + 1}
  \diffsquare$.}

The remainder of the proof is devoted to bounding
$\E[\ltwos{\truncate{\coorderr}}^2]$. We use identical notation to
that in our proof of Lemma~\ref{lemma:variance-bound}, which we recap
for reference (see also Table~\ref{table:notation}).  We define the
tail error vector $v \in \R^\numobs$ by $v_i = \sum_{j > d}
\coorderr_j \basis_j(x_i)$, $i \in [n]$, and recall the definitions of
the eigenvalue matrix $\eigmat = \diag(\eigenvalue_1, \ldots,
\eigenvalue_d) \in \R^{d \times d}$ and basis matrix $\basismat$ with
$\basismat_{ij} = \basis_j(x_i)$.  We use $Q = (I + \lambda
\eigmat^{-1})^{1/2}$ for shorthand, and we let $\event$ be the event that
\begin{equation*}
  \matrixnorm{Q^{-1}((1/\numobs) \basismat^T\basismat - I)Q^{-1}} \le
  1/2.
\end{equation*}
Writing $\foptimalreg = \sum_{j=1}^\infty \theta_j \basis_j$, we
define the alternate noise vector $\noise'_i = Y_i -
\foptimalreg(x_i)$.  Using this notation, mirroring the proof of
Lemma~\ref{lemma:variance-bound} yields
\begin{align}
  \E[\ltwos{\truncate{\esterr}}^2] \leq \E[\ltwos{Q \truncate{\coorderr}}^2]
  \leq 4 \E\left[\ltwo{\lambda Q^{-1}\eigmat^{-1}\truncate{\theta}
      + \frac{1}{\nsample} Q^{-1}\basismat^T v
      - \frac{1}{\nsample} Q^{-1}\basismat^T \noise'}^2\right]
  + \P(\event^c)\diffsquare,
  \label{eqn:oracle-bounding-truncation-with-noise}
\end{align}
which is an analogue of
equation~\eqref{eqn:bounding-truncation-with-noise}.  The bound
bound~\eqref{claim:bias-probability-small} controls the probability
$\P(\event^c)$, so it remains to control the first term in the
expression~\eqref{eqn:oracle-bounding-truncation-with-noise}.  We
first rewrite the expression within the norm as
\begin{align*}
  (\lambda-\lambdabase) Q^{-1}\eigmat^{-1}\truncate{\theta} +
  \frac{1}{\nsample} Q^{-1}\basismat^T v - \left(\frac{1}{\nsample}
  Q^{-1}\basismat^T \noise' -
  \lambdabase Q^{-1}\eigmat^{-1}\truncate{\theta} \right)
\end{align*}
The following lemma provides bounds on the first two terms:
\begin{lemma}
  \label{lemma:oracle-bound-bias-components-1}
  The following bounds hold:
  \begin{subequations}
    \begin{align}
      \ltwo{(\lambdabase-\lambda) Q^{-1}\eigmat^{-1} \truncate{\theta}}^2 &
      \leq \frac{(\lambdabase-\lambda)^2\hnorm{\foptimalreg}^2}{\lambda},
      \label{eqn:oracle-bound-bias-from-regularizer} \\
      \E\left[\ltwo{\frac{1}{n} Q^{-1} \basismat^T v}^2\right] & \leq
      \frac{\momentbound^4 \diffsquare \tr(K) \tailsum}{\lambda},
      \label{eqn:oracle-bound-detruncated-err}
    \end{align}
  \end{subequations}
\end{lemma}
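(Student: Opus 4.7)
}

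My plan is to recycle the arguments behind the bounds~\eqref{claim:truncated-fopt-small} and~\eqref{claim:detruncated-err-small-noise}, replacing $\fopt$ by the regularized population minimizer $\foptimalreg$ and invoking the fourth-moment bound from Lemma~\ref{lemma:new-pre-bound} wherever the $\fopt \in \hilbertspace$ version used Lemma~\ref{lemma:diff-bound}.

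For inequality~\eqref{eqn:oracle-bound-bias-from-regularizer}, the argument is purely linear-algebraic. Since $Q^2 = I + \lambda \eigmat^{-1}$, I compute
\begin{equation*}
\ltwo{Q^{-1}\eigmat^{-1}\truncate{\theta}}^2
= \truncate{\theta}^\top (\eigmat^2 + \lambda \eigmat)^{-1}\truncate{\theta}
\leq \frac{1}{\lambda}\truncate{\theta}^\top \eigmat^{-1}\truncate{\theta}
\leq \frac{1}{\lambda}\hnorm{\foptimalreg}^2,
\end{equation*}
using that $\eigmat^2 + \lambda \eigmat \succeq \lambda \eigmat$ and that $\sum_{j=1}^d \theta_j^2/\eigenvalue_j \leq \hnorm{\foptimalreg}^2$. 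Multiplying through by $(\lambdabase - \lambda)^2$ yields~\eqref{eqn:oracle-bound-bias-from-regularizer}; this step is essentially the bound~\eqref{claim:truncated-fopt-small} in disguise.

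For inequality~\eqref{eqn:oracle-bound-detruncated-err}, I will follow the argument leading to~\eqref{claim:detruncated-err-small-noise}. First, the operator-norm inequality $\matrixnorm{Q^{-1}\eigmat^{-1/2}}^2 \leq 1/\lambda$ gives the factorization
\begin{equation*}
\ltwo{(1/n) Q^{-1}\basismat^\top v}^2 \leq \tfrac{1}{\lambda}\ltwo{(1/n)\eigmat^{1/2}\basismat^\top v}^2.
\end{equation*}
Then, writing $\basismat_\ell$ for the $\ell$-th column of $\basismat$, expanding $\ltwo{\eigmat^{1/2}\basismat^\top v}^2 = \sum_{\ell=1}^d \eigenvalue_\ell \<\basismat_\ell, v\>^2$ and applying Cauchy--Schwarz, I will bound $\ltwo{v}^2$ via
\begin{equation*}
\ltwo{v}^2 \leq \sum_{i=1}^n \Bigl(\sum_{j>d} \coorderr_j^2/\eigenvalue_j\Bigr)\Bigl(\sum_{j>d}\eigenvalue_j \basis_j^2(x_i)\Bigr) \leq \hnorms{\detruncate{\esterr}}^2 \sum_{i=1}^n \sum_{j>d}\eigenvalue_j \basis_j^2(x_i),
\end{equation*}
and then take expectations, conditioning on the design so that the $\hnorm{\esterr}^2 \geq \hnorms{\detruncate{\esterr}}^2$ factor can be pulled out as $\E[\hnorm{\esterr}^2 \mid X]$. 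Applying Cauchy--Schwarz once more across these two $X$-measurable pieces gives
\begin{equation*}
\E\bigl[\ltwo{\basismat_\ell}^2 \basis_j^2(X_i)\, \E[\hnorm{\esterr}^2\mid X]\bigr] \leq \sqrt{\E[\ltwo{\basismat_\ell}^4 \basis_j^4(X_i)]}\sqrt{\E[(\E[\hnorm{\esterr}^2\mid X])^2]}.
\end{equation*}

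The main obstacle, and the reason the statement involves $\diffsquare$ rather than $\E[\hnorm{\esterr}^2]$, is that this last Cauchy--Schwarz forces me to control $\E[(\E[\hnorm{\esterr}^2\mid X])^2]$ rather than just $\E[\hnorm{\esterr}^2]$. This is exactly the fourth-moment estimate supplied by Lemma~\ref{lemma:new-pre-bound}, which bounds this quantity by $\diffsquare^2$. Combined with the moment bound $\E[\basis_\ell^4(X)\basis_j^4(X)] \leq \momentbound^4$ coming from Assumption~\ref{assumption:kernel} with $k \geq 4$ (or the trivial $\numobs^2 \momentbound^4$ bound on $\E[\ltwo{\basismat_\ell}^4]$ used in~\eqref{claim:detruncated-err-small}), the double sum telescopes into $\sum_{\ell \leq d}\eigenvalue_\ell \sum_{j>d}\eigenvalue_j \leq \tr(K)\tailsum$, and dividing by $\lambda n^2$ produces the stated bound.
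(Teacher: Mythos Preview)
Your approach is essentially identical to the paper's: the first bound is exactly~\eqref{claim:truncated-fopt-small} applied to $\foptimalreg$, and for the second bound the paper likewise recycles the derivation of~\eqref{claim:detruncated-err-small-noise}, applies Cauchy--Schwarz to split off $\sqrt{\E[(\E[\hnorm{\esterr}^2\mid X])^2]}$, and invokes Lemma~\ref{lemma:new-pre-bound} for the $\diffsquare$ factor.

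One small slip to fix: the moment bound you quote, $\E[\basis_\ell^4(X)\basis_j^4(X)] \leq \momentbound^4$, is off. What you actually need (and what the paper uses) is $\E[\ltwo{\basismat_\ell}^4 \basis_j^4(X_i)] \leq \numobs^2 \momentbound^8$, obtained from $\E[\basis_\ell^8(X)] \leq \momentbound^8$ under Assumption~\ref{assumption:kernel} with $\moment \geq 4$; taking the square root then yields the $\numobs\,\momentbound^4$ factor that combines with $\diffsquare$ to produce the stated bound after dividing by $\lambda \numobs^2$.
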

For the third term, we make the following claim.
\begin{lemma}
  \label{LemFinal}
  Under Assumptions~\ref{assumption:kernel}
  and~\ref{assumption:response-moment-bound}, we have
  \begin{align}
    \label{claim:oracle-noise}
    \E \left[\ltwo{\frac{1}{\nsample} Q^{-1}\basismat^T \noise' -
        \lambdabase Q^{-1} \eigmat^{-1}\truncate{\theta}}^2\right] & \leq
    \frac{\fulltracelambda\momentbound^2\altvarreg^2 }{\numobs}.
  \end{align}
\end{lemma}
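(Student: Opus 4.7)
}

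The plan is to recognize the quantity inside the squared norm as a centered empirical average, so that the left-hand side becomes a variance that decomposes over the $\numobs$ i.i.d.\ samples. First I would verify the centering. Since $\foptimalreg$ is the minimizer in~\eqref{eqn:new-pop-objective-function}, Fr\'echet differentiability of that objective with respect to $f \in \hilbertspace$ implies the population first-order condition $\E[(\foptimalreg(X) - Y)\basis_j(X)] + \lambdabase\,\theta_j/\eigenvalue_j = 0$ for every $j$, where $\theta_j = \inprod{\foptimalreg}{\basis_j}_{L^2(\P)}$. Reading this off for $j = 1,\ldots,d$ and stacking into a vector gives $\E[\tfrac{1}{\numobs}\basismat^T\noise'] = \lambdabase\,\eigmat^{-1}\truncate{\theta}$, so after multiplying by $Q^{-1}$ the subtracted term $\lambdabase Q^{-1}\eigmat^{-1}\truncate{\theta}$ is exactly the expectation of $\frac{1}{\numobs} Q^{-1}\basismat^T\noise'$.

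With the centering in hand, I would write $Z_i = Q^{-1}\pi_i\,\noise'_i$, where $\pi_i = (\basis_1(x_i),\ldots,\basis_d(x_i))^T$, so that the quantity in the lemma is $\|\frac{1}{\numobs}\sum_i (Z_i - \E[Z_i])\|_2^2$. By the i.i.d.\ assumption the cross terms vanish in expectation and
\begin{equation*}
\E\!\left[\ltwo{\tfrac{1}{\numobs}\sum_{i=1}^\numobs (Z_i - \E[Z_i])}^2\right]
\;\le\; \frac{1}{\numobs}\,\E[\ltwo{Z_1}^2]
\;=\; \frac{1}{\numobs}\sum_{j=1}^d \frac{\E[\basis_j(X_1)^2\,(\noise'_1)^2]}{1 + \lambda/\eigenvalue_j},
\end{equation*}
where the last equality uses the diagonal form $Q^{-1} = \diag((1+\lambda/\eigenvalue_j)^{-1/2})$.

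Next I would bound each summand. By the tower property, $\E[\basis_j(X)^2(\noise')^2] = \E[\basis_j(X)^2\,\varreg^2(X)]$, and Cauchy--Schwarz together with Assumption~\ref{assumption:kernel} (fourth-moment control of $\basis_j$) and Assumption~\ref{assumption:response-moment-bound} (control of $\E[\varreg^4(X)] = \altvarreg^4$) yield
\begin{equation*}
\E[\basis_j(X)^2\,\varreg^2(X)] \;\le\; \sqrt{\E[\basis_j(X)^4]}\,\sqrt{\E[\varreg^4(X)]} \;\le\; \momentbound^2\,\altvarreg^2.
\end{equation*}
Substituting back and recognizing $\sum_{j=1}^d 1/(1 + \lambda/\eigenvalue_j) \le \fulltracelambda$ gives the claimed bound $\fulltracelambda\,\momentbound^2\altvarreg^2/\numobs$.

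The only place that requires care is the first step: the noise $\noise'_i = Y_i - \foptimalreg(x_i)$ is \emph{not} mean-zero conditional on $x_i$ (since $\foptimalreg \neq \fopt$ in general), so one cannot argue as in Lemma~\ref{lemma:variance-bound} by conditioning on $\Data$. The subtraction of $\lambdabase Q^{-1}\eigmat^{-1}\truncate{\theta}$ is precisely the population-level correction that undoes this bias, and identifying it via the Fr\'echet first-order condition is the only nontrivial ingredient; the remainder is a standard variance computation plus Cauchy--Schwarz.
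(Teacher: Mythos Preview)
Your proposal is correct and follows essentially the same route as the paper's proof: both identify $\lambdabase Q^{-1}\eigmat^{-1}\truncate{\theta}$ as the expectation of $\tfrac{1}{\numobs}Q^{-1}\basismat^T\noise'$ via the population first-order condition for $\foptimalreg$, reduce to a variance over i.i.d.\ summands, and then bound each coordinate by $\momentbound^2\altvarreg^2$ using Cauchy--Schwarz/H\"older together with Assumptions~\ref{assumption:kernel} and~\ref{assumption:response-moment-bound}. Your explicit insertion of the tower-property step $\E[\basis_j(X)^2(\noise')^2] = \E[\basis_j(X)^2\varreg^2(X)]$ before applying Cauchy--Schwarz is a minor clarification the paper leaves implicit, but otherwise the arguments coincide.
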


Deferring the proof of the two lemma to Section~\ref{sec:proof-oracle-bias-components-1} and
Section~\ref{sec:proof-oracle-noise}, we apply the inequality
$(a+b+c)^2\leq 4a^2+4b^2+2c^2$ to
inequality~\eqref{eqn:oracle-bounding-truncation-with-noise}, and we
have
\begin{align}
  & \E[\ltwos{\truncate{\esterr}}^2] - \P(\event^c)
    \diffsquare \leq \E[\ltwos{Q\truncate{\coorderr}}^2] - \P(\event^c)\diffsquare \nonumber \\ 
  & \qquad \le 16 \E\left[\ltwo{(\lambda - \lambdabase)
      Q^{-1} \eigmat^{-1} \truncate{\theta}}^2\right] +
  \frac{16}{\numobs^2} \E\left[\ltwo{Q^{-1} \basismat^T v}^2\right] +
  \frac{8}{\numobs^2} \E\left[ \ltwo{Q^{-1} \basismat^T \noise' -
      \lambdabase Q^{-1} \eigmat^{-1} \truncate{\theta}}^2\right] \nonumber \\ 
  & \qquad
  \leq \frac{16(\lambdabase-\lambda)^2\hnorm{\foptimalreg}^2}{\lambda} +
  \frac{16\momentbound^4 \diffsquare \tr(K) \tailsum}{\lambda} +
  \frac{8 \fulltracelambda \momentbound^2\altvarreg^2 }{\numobs},\label{eqn:oracle-variance-bound-useful-for-bias-bound}
\end{align}
where we have applied the
bounds~\eqref{eqn:oracle-bound-bias-from-regularizer}
and~\eqref{eqn:oracle-bound-detruncated-err} from
Lemma~\ref{lemma:oracle-bound-bias-components} and the
bound~\eqref{claim:oracle-noise} from Lemma~\ref{LemFinal}.  Applying the
bound~\eqref{claim:bias-probability-small} to control $\P(\event^c)$
and recalling that $\E[\ltwo{\detruncate{\esterr}}^2] \le
\eigenvalue_{d+1} \diffsquare$ completes the proof.


\subsection{Proof of Lemma~\ref{lemma:new-pre-bound}}
\label{sec:proof-new-pre-bound}

Recall that $\funcapprox$ minimizes the empirical objective. Consequently,
\begin{align*}
  \lambda \E[\hnorms{\funcapprox}^2\mid \Data] & \leq \E\left[
    \frac{1}{\numobs}\sum_{i=1}^\numobs (\funcapprox(x_i)-Y_i)^2 +
    \lambda\hnorms{\funcapprox}^2 \mid \Data \right]\\ & \leq
  \frac{1}{\numobs}\sum_{i=1}^\numobs \E[(\foptimalreg(x_i)-Y_i)^2
    \mid x_i] + \lambda\hnorms{\foptimalreg}^2 =
  \frac{1}{\numobs}\sum_{i=1}^\numobs \varreg^2(x_i) +
  \lambda\hnorms{\foptimalreg}^2
\end{align*}
The triangle inequality
immediately gives us the upper bound
\begin{align*}
  \E[\hnorms{\esterr}^2\mid \Data]
  \leq  2\hnorms{\foptimalreg}^2 +
  \E[2\hnorms{\funcapprox}^2\mid \Data]
  \leq \frac{2}{\lambda\numobs}\sum_{i=1}^\numobs \varreg^2(x_i)
  + 4\hnorms{\foptimalreg}^2.
\end{align*}
Since $(a + b)^2 \le 2a^2 + 2b^2$, convexity yields
\begin{align*}
  \E[(\E[\hnorm{\esterr}^2\mid \Data])^2]
  & \le \E\left[ \left(\frac{2}{\lambda\numobs}\sum_{i=1}^\numobs
    \varreg^2(X_i) + 4\hnorms{\foptimalreg}^2 \right)^2\right]\\
   &\leq \frac{8}{\lambda^2\numobs}\sum_{i=1}^\numobs
  \E[\varreg^4(X_i)] + 32\hnorms{\foptimalreg}^4
  = 32\hnorms{\foptimalreg}^4 + \frac{8\altvarreg^4}{\lambda^2}.
\end{align*}
This completes the proof of the first of the
inequalities~\eqref{eqn:new-diff-moment-bounds}.  The second of the
inequalities~\eqref{eqn:new-diff-moment-bounds}
follows from the first by Jensen's inequality.


\subsection{Proof of Lemma~\ref{lemma:oracle-bound-bias-components-1}}
\label{sec:proof-oracle-bias-components-1}

Our previous bound~\eqref{claim:truncated-fopt-small} immediately
implies inequality~\eqref{eqn:oracle-bound-bias-from-regularizer}.  To
prove the second upper bound, we follow the proof of the
bound~\eqref{claim:detruncated-err-small-noise}. From
inequalities~\eqref{eqn:err-small-noise-expansion-1}
and~\eqref{eqn:err-small-noise-expansion-2}, we obtain that
\begin{equation}
  \label{eqn:new-detruncated-err-small-noise-1}
  \ltwo{(1 / \numobs) Q^{-1} \basismat^T v}^2
  \le \frac{1}{\lambda \numobs^2}
  \sum_{i=1}^\numobs \sum_{l=1}^d \sum_{j > d} \eigenvalue_\ell \eigenvalue_j
  \E\left[\ltwo{\basismat_\ell}^2\basis_j^2(X_i)
    \E[\hnorm{\esterr}^2 \mid \{X_i\}_{i=1}^n]\right].
\end{equation}
Applying H\"older's inequality yields
\begin{align*}
  \E \left[\ltwo{\basismat_\ell}^2\basis_j^2(X_i) \E[\hnorm{\esterr}^2
      \mid \{X_i\}_{i=1}^n]\right] \leq
  \sqrt{\E[\ltwo{\basismat_\ell}^4\basis_j^4(X_i)]}
  \sqrt{\E[(\E[\hnorm{\esterr}^2 \mid \{X_i\}_{i=1}^n])^2]}.
\end{align*}
Note that Lemma~\ref{lemma:new-pre-bound} provides the bound
$\difffourth$ on the final expectation. By definition of
$\basismat_\ell$, we find that
\begin{align*}
  \E[\ltwo{\basismat_\ell}^4\basis_j^4(x_i)]
  = \E\left[ \left(\sum_{k=1}^\numobs \basis_\ell^2(x_k) \right)^2
    \basis_j^4(x_i)\right]
  \leq \numobs^2 \E\left[ \frac{1}{2}
    \left(\basis_\ell^8(x_1) + \basis_j^8(x_1)\right) \right]
  \leq \numobs^2\momentbound^8,
\end{align*}
where we have used Assumption~\ref{assumption:kernel} with moment
$2\moment \ge 8$, or equivalently $\moment \ge 4$. Thus
\begin{align}
  \label{eqn:new-detruncated-err-small-noise-2}
  \E \left[ \ltwo{\basismat_\ell}^2\basis_j^2(X_i) \E[\hnorm{\esterr}^2
      \mid \{X_i\}_{i=1}^n] \right]
  & \leq \numobs \momentbound^4 \diffsquare.
\end{align}
Combining inequalities~\eqref{eqn:new-detruncated-err-small-noise-1}
and~\eqref{eqn:new-detruncated-err-small-noise-2} yields the
bound~\eqref{eqn:oracle-bound-detruncated-err}.


\subsection{Proof of Lemma~\ref{LemFinal}}
\label{sec:proof-oracle-noise}

Using the fact that $Q$ and $\eigmat$ are diagonal, we have
\begin{equation}
  \label{eqn:oracle-expand-noise}
  \E\left[\ltwo{\frac{1}{\numobs} Q^{-1}\basismat^T \noise'
      - \lambdabase Q^{-1}\eigmat^{-1}\truncate{\theta}}^2\right]
  = \sum_{j=1}^d Q_{jj}^{-2} \E\left[
    \left(\frac{1}{\numobs} \sum_{i=1}^\numobs \basis_j(X_i) \noise_i'
    - \frac{\lambdabase \theta_j}{\eigenvalue_j}\right)^2\right].
\end{equation}
\frechet differentiability and the fact that $\foptimalreg$ is the global
minimizer of the regularized regression problem imply that
\begin{equation*}
  \E[\eval_{X_i} \noise_i'] + \lambdabase \foptimalreg
  = \E\left[\eval_X \left(\<\eval_X, \foptimalreg\> - y\right)\right]
  + \lambdabase \foptimalreg = 0.
\end{equation*}
Taking the (Hilbert) inner product of the preceding display with the
basis function $\basis_j$, we get
\begin{align}
  \E\left[ \basis_j(X_i)\noise'_i - \frac{\lambdabase\theta_j}{\eigenvalue_j}
    \right] = 0.
  \label{eqn:oracle-mean-zero-noise}
\end{align}
Combining the equalities~\eqref{eqn:oracle-expand-noise}
and~\eqref{eqn:oracle-mean-zero-noise} and using the i.i.d.\ nature of
$\Data$ leads to
\begin{align}
  \E\left[\ltwo{\frac{1}{\numobs} Q^{-1}\basismat^T \noise' -
      \lambdabase Q^{-1}\eigmat^{-1}\truncate{\theta}}^2\right] & =
  \sum_{j=1}^d Q_{jj}^{-2} \var\left( \frac{1}{\numobs}
  \sum_{i=1}^\numobs \basis_j(X_i) \noise_i' - \frac{\lambdabase
    \theta_j}{\eigenvalue_j}\right) \nonumber \\ & = \frac{1}{\numobs}
  \sum_{j=1}^d Q_{jj}^{-2} \var\left( \basis_j(X_1) \noise_1'\right).
  \label{eqn:oracle-truncated-variance}
\end{align}

Using the elementary inequality $\var(Z) \le \E[Z^2]$ for any random variable
$Z$, we have from H\"older's inequality that
\begin{equation*}
  \var(\basis_j(X_1) \noise_1')
  \le \E[\basis_j(X_1)^2 (\noise_1')^2]
  \le \sqrt{\E[\basis_j(X_1)^4] \E[\varreg^4(X_1)]}
  \le \sqrt{\momentbound^4}
  \sqrt{\altvarreg^4},
\end{equation*}
where we used
Assumption~\ref{assumption:response-moment-bound} to upper bound the
fourth moment $\E[\varreg^4(X_1)]$.
Using the fact that $Q_{jj}^{-1} \le 1$, we obtain the following
upper bound on the quantity~\eqref{eqn:oracle-truncated-variance}:
\begin{align*}
  \frac{1}{\numobs}
  \sum_{j=1}^d Q_{jj}^{-2} \var(\basis_j(X_1) \noise_1')
  & =
  \frac{1}{\numobs}
  \sum_{j=1}^d \frac{\var(\basis_j(X_1) \noise_1')}{1 + \lambda/\mu_j }
  \le \frac{\fulltracelambda\momentbound^2\altvarreg^2}{\numobs},
\end{align*}
which establishes the claim.


\section{Proof of Lemma~\ref{lemma:oracle-norm-expectation-bound}}
\label{appendix:proof-lemma-oracle-expectation}

At a high-level, the proof is similar to
that of Lemma~\ref{lemma:norm-expectation-moments}, but we take care
since the errors $\foptimalreg(x) - y$ are not conditionally mean-zero
(or of conditionally bounded variance).  
Recalling our notation of $\eval_x$ as the RKHS evaluator for $x$, we
have by assumption that $\fapprox$ minimizes the empirical
objective~\eqref{eqn:local-empirical-objective}.  As in our derivation
of equality~\eqref{eqn:gradient-optimality}, the \frechet
differentiability of this objective implies the first-order optimality
condition
\begin{align}
  \label{eqn:oracle-first-gradient}
  \frac{1}{\numobs}\sum_{i=1}^\numobs \eval_{x_i} \<\eval_{x_i},
  \esterr \> + \frac{1}{\numobs}\sum_{i=1}^\numobs
  (\eval_{x_i}\<\eval_{x_i}, \foptimalreg\> - y_i) + \lambda \esterr +
  \lambda\foptimalreg = 0,
\end{align}
where $\esterr \defeq \fapprox - \foptimalreg$.  In addition, the
optimality of $\foptimalreg$ implies that
\mbox{$\E[\eval_{x_i}(\langle \eval_{x_i}, \foptimalreg \rangle -
    y_i)] + \lambdabase\foptimalreg=0$.} Using this in
equality~\eqref{eqn:oracle-first-gradient}, we take expectations with
respect to $\{x_i, y_i\}$ to obtain
\begin{align*}
  \E\bigg[\frac{1}{\numobs}\sum_{i=1}^\numobs \eval_{X_i}
    \<\eval_{X_i}, \esterr \>
    + \lambda \esterr\bigg]
  + (\lambda-\lambdabase) \foptimalreg = 0.
\end{align*}
Recalling the definition of the sample covariance operator
$\outprodmat \defeq \frac{1}{\numobs} \sum_{i=1}^\numobs \eval_{x_i}
\otimes \eval_{x_i}$, we arrive at
\begin{equation}
  \label{eqn:oracle-gradient-after-e}
  \E[(\outprodmat + \lambda I) \esterr] = (\lambdabase-\lambda) \foptimalreg,
\end{equation}
which is the analogue of our earlier
equality~\eqref{eqn:opt-gradient-equation}.

We now proceed via a truncation argument similar to that used in our
proofs of Lemmas~\ref{lemma:norm-expectation-moments}
and~\ref{lemma:variance-bound}.  Let $\coorderr \in
\ell_2(\N)$ be the expansion of the error $\esterr$ in the basis
$\{\basis_j\}$, so that $\esterr = \sum_{j=1}^\infty \coorderr_j
\basis_j$.  For a fixed (arbitrary) $d \in \N$, define
\begin{equation*}
  \truncate{\esterr} \defeq \sum_{j = 1}^d \coorderr_j \basis_j ~~
  \mbox{and} ~~ \detruncate{\esterr} \defeq \esterr -
  \truncate{\esterr} = \sum_{j > d} \coorderr_j \basis_j,
\end{equation*}
and note that $\ltwo{\E[\esterr]}^2 = \ltwos{\E[\truncate{\esterr}]}^2
+ \ltwos{\E[\detruncate{\esterr}]}^2$. By
Lemma~\ref{lemma:new-pre-bound}, the second term is controlled by
\begin{equation}
  \label{eqn:oracle-detruncated-part-is-small-with-noise}
  \ltwos{E[\detruncate{\esterr}]}^2 \leq \E[\ltwos{\detruncate{\esterr}}^2]
  = \sum_{j > d} \E[\coorderr_j^2]
  \le \sum_{j > d} \frac{\eigenvalue_{d + 1}}{\eigenvalue_j}
  \E[\coorderr_j^2]
  = \eigenvalue_{d + 1} \E[\hnorms{\detruncate{\esterr}}^2]
  \le \eigenvalue_{d + 1}\diffsquare.
\end{equation}

The remainder of the proof is devoted to bounding
$\ltwos{\E[\truncate{\esterr}]}^2$.  Let $\foptimalreg$ have the
expansion $(\theta_1, \theta_2, \ldots)$ in the basis $\{\basis_j\}$.
Recall (as in Lemmas~\ref{lemma:norm-expectation-moments}
and~\ref{lemma:variance-bound}) the definition of the matrix
$\basismat \in \R^{\numobs \times d}$ by its coordinates
$\basismat_{ij} = \basis_j(x_i)$, the diagonal matrix $\eigmat =
\diag(\eigenvalue_1, \ldots, \eigenvalue_d) \succ 0 \in \R^{d \times
  d}$, and the tail error vector $v \in \R^\numobs$ by $v_i = \sum_{j
  > d} \coorderr_j \basis_j(x_i) = \detruncate{\esterr}(x_i)$.
Proceeding precisely as in the derivations of
equalities~\eqref{eqn:truncated-opt-gradient-equation}
and~\eqref{eqn:truncated-gradient-v2}, we have the following equality:
\begin{equation}
  \label{eqn:oracle-truncated-opt-gradient-equation}
  \E\left[\left(\frac{1}{\numobs} \basismat^T \basismat + \lambda
    \eigmat^{-1}\right) \truncate{\coorderr}\right] =
  (\lambdabase-\lambda) \eigmat^{-1} \truncate{\theta} -
  \E\left[\frac{1}{n} \basismat^T v\right].
\end{equation}
Recalling the definition of the shorthand matrix $Q = (I + \lambda
\eigmat^{-1})^{1/2}$, with some algebra we have
\begin{equation*}
  Q^{-1}\bigg(\frac{1}{\numobs}\basismat^T \basismat + \lambda \eigmat^{-1}
  \bigg) =
  Q + Q^{-1}\bigg(\frac{1}{\numobs}\basismat^T \basismat - I \bigg),
\end{equation*}
so we can
expand expression~\eqref{eqn:oracle-truncated-opt-gradient-equation}
as
\begin{align*}
  \E\left[Q \truncate{\coorderr} + Q^{-1}\left(\frac{1}{n}
    \basismat \basismat^T - I\right)\truncate{\coorderr}\right]
  & = \E\left[Q^{-1}\left(\frac{1}{\numobs} \basismat^T \basismat + \lambda
    \eigmat^{-1}\right) \truncate{\coorderr}\right] \\
  & =
  (\lambdabase-\lambda) Q^{-1}\eigmat^{-1} \truncate{\theta} -
  \E\left[\frac{1}{n} Q^{-1}\basismat^T v\right],
\end{align*}
or, rewriting,
\begin{equation}
  \label{eqn:oracle-inverted-truncated-opt-gradient}
  \E[Q \truncate{\coorderr}] = (\lambdabase-\lambda) Q^{-1} \eigmat^{-1}
  \truncate{\theta} - \E \left [ \frac{1}{n} Q^{-1} \basismat^T v
    \right] - \E\left[Q^{-1}\left(\frac{1}{n} \basismat^T \basismat - I
    \right ) \truncate{\coorderr} \right].
\end{equation}
Lemma~\ref{lemma:oracle-bound-bias-components-1} provides bounds on the first two terms
on the right-hand-side of
equation~\eqref{eqn:oracle-inverted-truncated-opt-gradient}.
The following lemma provides upper bounds on the third term:
\begin{lemma}
  \label{lemma:oracle-bound-bias-components}
  There exists a universal constant $C$ such that
    \begin{align}
      \ltwo{\E\left[Q^{-1} \left(\frac{1}{n} \basismat^T \basismat - I\right)
          \truncate{\coorderr}\right]}^2
      & \leq \frac{C(\momentbound^2 \fulltracelambda \log d)^2}{\numobs}
      \E\left[\ltwos{Q \truncate{\coorderr}}^2\right],
      \label{eqn:oracle-bound-bias-remainder}
    \end{align}
\end{lemma}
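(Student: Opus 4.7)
The plan is to rewrite the quantity to be bounded so that the zero-mean matrix $M := Q^{-1}\bigl((1/n)\basismat^T\basismat - I\bigr)Q^{-1}$ appears explicitly, then exploit its vanishing mean via a centering trick, and finally invoke the matrix moment bound~\eqref{eqn:matrix-moment-bound} with $k=2$.

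First I would observe that, since $Q$ is invertible and diagonal,
\[
Q^{-1}\!\left(\tfrac{1}{n}\basismat^T\basismat - I\right)\truncate{\coorderr}
\;=\; M\,(Q\truncate{\coorderr}).
\]
The orthonormality of $\{\basis_j\}$ in $L^2(\P)$ yields $\E[\basis_j(X_1)\basis_k(X_1)]=\delta_{jk}$, hence $\E[(1/n)\basismat^T\basismat]=I$ and therefore $\E[M]=0$. Since $\E[Q\truncate{\coorderr}]$ is a deterministic vector, I can pull it out of the expectation:
\[
\E\!\left[M\,\E[Q\truncate{\coorderr}]\right] \;=\; \E[M]\,\E[Q\truncate{\coorderr}] \;=\; 0,
\]
which lets me re-center:
\[
\E\!\left[M\,(Q\truncate{\coorderr})\right]
\;=\; \E\!\left[M\bigl(Q\truncate{\coorderr} - \E[Q\truncate{\coorderr}]\bigr)\right].
\]

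Next I would combine Jensen's inequality with sub-multiplicativity of the operator norm and the Cauchy--Schwarz inequality:
\[
\ltwo{\E[M(Q\truncate{\coorderr})]}^2
\;\leq\; \bigl(\E\ltwo{M(Q\truncate{\coorderr}-\E[Q\truncate{\coorderr}])}\bigr)^2
\;\leq\; \E[\matrixnorm{M}^2]\cdot\E\ltwo{Q\truncate{\coorderr}-\E[Q\truncate{\coorderr}]}^2.
\]
Since $\E\ltwo{Q\truncate{\coorderr}-\E[Q\truncate{\coorderr}]}^2 \leq \E\ltwo{Q\truncate{\coorderr}}^2$ (the variance is dominated by the second moment), this reduces the problem to bounding $\E[\matrixnorm{M}^2]$.

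Finally, I would apply the matrix moment inequality~\eqref{eqn:matrix-moment-bound} from the proof of Lemma~\ref{LemBigLemma} with the choice $k=2$. Because $n^{1/2-1/k}=1$ when $k=2$, the maximum in that bound simplifies to $(2\vee \log d)$, so
\[
\E[\matrixnorm{M}^2]
\;\leq\; (2\vee\log d)^2\,\frac{C^2(\momentbound^2\fulltracelambda)^2}{\numobs}
\;\lesssim\; \frac{\log^2(d)\,(\momentbound^2\fulltracelambda)^2}{\numobs},
\]
which, combined with the preceding display, yields precisely~\eqref{eqn:oracle-bound-bias-remainder}. The main (mild) obstacle is recognizing the centering trick; without using $\E[M]=0$ one would be tempted to bound $\E\|M\|\cdot\E\|Q\truncate{\coorderr}\|$, which is the wrong order in $n$. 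The $k=2$ specialization of~\eqref{eqn:matrix-moment-bound} is what converts the bound into the stated $\log^2(d)/\numobs$ scaling.
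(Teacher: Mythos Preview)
Your proof is correct, and the core ingredients---Jensen's inequality, Cauchy--Schwarz, and the matrix moment bound~\eqref{eqn:matrix-moment-bound} with $k=2$---are exactly those the paper uses. The one difference is that the paper does \emph{not} perform the centering step: it directly writes
\[
\ltwo{\E[M\,Q\truncate{\coorderr}]}^2
\;\le\; \bigl(\E[\matrixnorm{M}\,\ltwo{Q\truncate{\coorderr}}]\bigr)^2
\;\le\; \E[\matrixnorm{M}^2]\cdot\E[\ltwo{Q\truncate{\coorderr}}^2],
\]
the second inequality being Cauchy--Schwarz on the product of the two random scalars $\matrixnorm{M}$ and $\ltwo{Q\truncate{\coorderr}}$. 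This already yields the $1/\numobs$ rate, since $\E[\matrixnorm{M}^2]\lesssim \log^2(d)\,(\momentbound^2\fulltracelambda)^2/\numobs$. Your remark that without centering one would be stuck with $\E\matrixnorm{M}\cdot\E\ltwo{Q\truncate{\coorderr}}$ (and hence the wrong order) is therefore a misconception: Cauchy--Schwarz on the expectation of the product handles the dependence between $M$ and $\truncate{\coorderr}$ directly, and the centering step---while harmless---is not needed.
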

\noindent We defer the proof to
Section~\ref{sec:proof-oracle-bias-components}.\\

Applying Lemma~\ref{lemma:oracle-bound-bias-components-1} and Lemma~\ref{lemma:oracle-bound-bias-components} to
equality~\eqref{eqn:oracle-inverted-truncated-opt-gradient} and using the
standard inequality $(a+b+c)^2 \leq 4a^2 + 4b^2 + 2c^2$, we obtain the upper
bound
\begin{align*}
  \ltwo{\E[\truncate{\esterr}]}^2
  \leq \frac{4(\lambdabase-\lambda)^2\hnorm{\foptimalreg}^2}{\lambda}
  + \frac{4\momentbound^4 \diffsquare \tr(K) \tailsum}{\lambda}
  + \frac{C(\momentbound^2 \fulltracelambda \log d)^2}{\numobs}
  \E\left[\ltwos{Q \truncate{\coorderr}}^2\right]
\end{align*}
for a universal constant $C$. Note that inequality~\eqref{eqn:oracle-variance-bound-useful-for-bias-bound}
provides a sufficiently tight bound on the term $\E\left[\ltwos{Q \truncate{\coorderr}}^2\right]$.
Combined with
inequality~\eqref{eqn:oracle-detruncated-part-is-small-with-noise}, this
completes the proof of Lemma~\ref{lemma:oracle-norm-expectation-bound}. \\


\subsection{Proof of Lemma~\ref{lemma:oracle-bound-bias-components}}
\label{sec:proof-oracle-bias-components}

By using Jensen's inequality
and then applying Cauchy-Schwarz, we find
\begin{align*}
  \ltwo{\E\left[Q^{-1}\left(\frac{1}{n} \basismat^T \basismat -
      I\right)\truncate{\coorderr}\right]}^2 &\leq
  \left(\E\left[\ltwo{Q^{-1} \left(\frac{1}{n} \basismat^T \basismat -
      I\right)\truncate{\coorderr}}\right]\right)^2\nonumber \\
& \leq \E\left[\matrixnorm{Q^{-1}\left(\frac{1}{n} \basismat^T
      \basismat -
      I\right)Q^{-1}}^2\right]\E\left[\ltwos{Q\truncate{\coorderr}}^2\right].
\end{align*}
The first component of the final product can be controlled by the
matrix moment bound established in the proof of
inequality~\eqref{claim:bias-probability-small}.  In particular,
applying~\eqref{eqn:matrix-moment-bound} with $k=2$ yields a
universal constant $C$ such that
\begin{align*}
  \E\left[\matrixnorm{Q^{-1}\left(\frac{1}{n} \basismat^T \basismat -
      I \right)Q^{-1}}^2\right] \leq
  \frac{C(\momentbound^2 \fulltracelambda \log d)^2}{\numobs},
\end{align*}
which establishes the claim~\eqref{eqn:oracle-bound-bias-remainder}.


\bibliographystyle{abbrvnat}
\bibliography{bib}

\end{document}